\newcommand{\N}{\mathbb{N}}
\newcommand{\Z}{\mathbb{Z}}
\newcommand{\R}{\mathbb{R}}
\newcommand{\C}{\mathbb{C}}
\renewcommand{\d}{\operatorname{d}\!}
\newcommand{\e}{\varepsilon}
\newcommand{\s}{\operatorname{span}}
\newcommand{\vertiii}[1]{{\left\vert\kern-0.25ex\left\vert\kern-0.25ex\left\vert #1 \right\vert\kern-0.25ex\right\vert\kern-0.25ex\right\vert}}
\newcommand{\ex}[1]{e^{-2\pi i #1}}
\newcommand{\f}[2]{f_{#1}^{(#2)}}
\newcommand{\df}[2]{\dot{f}_{#1}^{(#2)}}
\let\oldtocsection=\tocsection
\let\oldtocsubsection=\tocsubsection
\renewcommand{\tocsection}[2]{\hspace{0em}\oldtocsection{#1}{#2}}
\renewcommand{\tocsubsection}[2]{\hspace{1em}\oldtocsubsection{#1}{#2}}
\theoremstyle{plain}
\newtheorem*{theorem*}{Theorem}
\newtheorem{theorem}{Theorem}[section]
\newtheorem{lemma}[theorem]{Lemma}
\newtheorem{proposition}[theorem]{Proposition}
\theoremstyle{definition}
\newtheorem{definition}[theorem]{Definition}
\newtheorem{remark}[theorem]{Remark}
\newtheorem{example}[theorem]{Example}
\newtheorem{manualtheorem}{Theorem}
\newtheorem{manualcorollary}{Corollary}
\newtheorem{manualstep}{\bf Step}
\newtheorem{manualhyp}{Hypothesis}
\author{Matheus M. Castro and Gary Froyland}
\address{School of Mathematics and Statistics, UNSW Sydney, Sydney NSW 2052, Australia}
\email{m.manzatto\_de\_castro@unsw.edu.au, g.froyland@unsw.edu.au}
\newcommand\smallO{
  \mathchoice
    {{\scriptstyle\mathcal{O}}}% \displaystyle
    {{\scriptstyle\mathcal{O}}}% \textstyle
    {{\scriptscriptstyle\mathcal{O}}}% \scriptstyle
    {\scalebox{.7}{$\scriptscriptstyle\mathcal{O}$}}%\scriptscriptstyle
  }
\newcommand*\rev[1]{#1}
\newcommand*\mmc[1]{{\color{PineGreen}{#1}}}
\newlength{\dhatheight}
\newcommand{\doublehat}[1]{%
    \settoheight{\dhatheight}{\ensuremath{\hat{#1}}}%
    \addtolength{\dhatheight}{-0.35ex}%
    \hat{\vphantom{\rule{1pt}{\dhatheight}}%
    \smash{\hat{#1}}}}
\definecolor{darkorange}{rgb}{0.8, 0.33, 0.0}
\title[Notes on the cycles problem]{On the structure of complex spectra and eigenfunctions of transfer and Koopman operators} 
\date{\today}
\begin{document}

\begin{abstract}
Complex eigenspectra of transfer and Koopman operators describe rotational motion in dynamical systems.
A particularly relevant situation in applications is when the rotation speed depends on the state-space position of the dynamics.
We consider a canonical model of such dynamics in the presence of small noise, and provide precise characterisations of the eigenspectrum and eigenfunctions of the corresponding transfer operators.
Further, we study the limiting behaviour of the eigenspectrum and eigenfunctions in the zero-noise limit, including their quadratic and linear response.
Our results clarify the structure of transfer and Koopman operator eigenspectra, and provide new interpretations relevant to applications.
Our theorems on support localisation of the eigenfunctions yield simple algorithms to detect the existence and state-space location of approximately cyclic motion with distinct periods.
Our \rev{numerical results verify} that information on the cycle periods and their locations determined by the operator eigendata is \rev{insensitive to noise level in the linear response regime.} 
We believe that the dynamic mechanisms underlying the eigendata and their properties apply rather broadly and enhance our understanding of approximate cycle detection in dynamical systems with operator methods.

\end{abstract}

\keywords{Transfer operator, Koopman operator, complex spectrum, cycle extraction, zero-noise limit, linear response.}
%\subjclass[2020]{47D08; 60J05; 37D20; 37D35; 37D45}

% \vspace*{-.3cm}
\maketitle
%{\thispagestyle{empty}\tableofcontents\newpage}
%\tableofcontents

\section{Introduction}\label{sec:1}
Over the last three decades, operator-theoretic techniques have proven to be highly successful methods of analysis for complex dynamical systems.
The basis of the analysis is usually the dominant eigenspectrum of a transfer operator or Koopman operator and the corresponding eigenfunctions.
In the autonomous setting, \rev{where the underlying dynamics does not change in time,} large real spectral values correspond to almost-invariant sets \rev{(large regions of state space that have no collective state-space motion under the dynamics)} \cite{DellnitzJunge99,dellnitz2000theisolated}.
This concept has been applied extensively in several fields, including molecular dynamics \cite{schutte2001transfer} \rev{to find stable molecular conformations}, space mission design \cite{dellnitz2006space} \rev{to identify orbital trapping regions}, and physical oceanography \cite{froyland2007detection,froyland2014well,miron2019} \rev{to map ocean gyres and garbage patches}.
Regarding \textit{complex} eigenvalues, the Halmos--von Neumann Theorem (see e.g. \cite[Theorem 17.11]{eisner2015operator}) states that each ergodic measure-preserving system with discrete spectrum is isomorphic to an ergodic rotation system, \rev{for example, an aperiodic rotation on a $n$-torus}.
This concept has been used extensively in the context of Koopman operators, e.g.\ \cite{mezic2004comparison,rowley2009spectral}.

The current paper takes the underlying Halmos--von Neumann idea and extends it in two ways.
Firstly, \textit{we allow the rotation to depend on the location in the state space}, rather than being fixed along a particular coordinate throughout the state space. Secondly, \textit{we make a detailed study of the effect of noise} on the complex eigenvalues and eigenfunctions.
%principal idea on which the current paper builds.
The motivation for this study is that complex dynamical systems often possess many co-existing cycles or approximate cycles. 
For example, in the Earth system, climate drivers such as the El-Ni\~no Southern Oscillation (ENSO)
%the Indian Ocean Dipole (IOD), the Southern Annual Mode (SAM), the Interdecadal Pacific Oscillation (IPO),
and the Madden--Julian Oscillation (\rev{MJO}) are approximate co-existing cycles that influence the weather and climate.
%On the timescale of millions of years, there is evidence of changes in the period of glacial cycles, whereby northern hemisphere continental ice sheets repeatedly  accumulate and melt. 
On even longer timescales, changing glacial cycles describe how northern-hemisphere continental ice sheets repeatedly accumulate and melt over millions of years.
%A change in the frequency of glacial decay and recovery occurred during the mid-Pleistocene transition about one million years ago, where the 41,000-year periodicity (matching the period of Earth's obliquity or axial tilt) switched to a 100,000-year period (matching the period of the eccentricity of the Earth's orbit).
% \gf{Somewhere we should put some text about what we discussed in our meeting with Nick on 18 March regarding the structure of the spectrum and our results. 
%  Something about ``resonant spectrum (higher-order powers of the basic generating frequencies) are generated \emph{along} trajectories'' and ``smaller magnitude spectra with identical frequencies are generated by noise \emph{between} trajectories''.  This is pretty clear from our results on the structure of the eigenvalues. It also gives an opportunity to look for this structure in other papers and point out that we have a model to explain it now.}

%Over the last three decades there has been a variety of work \cite{FGLPS21} on diagnosing some of these cycles from models or observations.
Recent work \cite{FGLPS21,FGLS24} has demonstrated that transfer operator methods are a flexible and reliable way of extracting cycles from models, observations, and even time series of spatial fields;  for example extracting the ENSO cycle from a time series of observed sea-surface temperature fields.
Long-lived persistent cycles typically give rise to complex eigenvalues in the dominant spectrum of the transfer operator.
\textit{The magnitude of the eigenvalue quantifies the rate of temporal decay of the cycle, while the argument of the eigenvalue directly provides an estimate of the cycle's period.
Moreover, the corresponding eigenfunctions may be used to represent the cycle in terms of the original spatiotemporal time series \cite{FGLPS21}}.

These ideas may be illustrated with the Lorenz equations, where an estimate of the transfer operator is constructed and the complex eigenvalue with largest magnitude is computed. 
%An example from \cite{FGLPS21} provides an initial illustration of  construct an estimate of the transfer operator  from a long time series,  
%The corresponding eigenvector is also complex.
%Because the Lorenz attractor \m{(This is the first time that Lorenz attractor is mentioned, from where it came from?)} lies in $\mathbb{R}^3$ we may visualise the entries of complex eigenvector on the attractor.
The left panel of Figure \ref{fig:lorenz} displays the magnitude of the complex entries of the corresponding eigenvector on a blue colourscale, and the right panel shows the argument of the complex entries on a periodic colourscale.
% \m{An example from \cite{FGLPS21} provides an initial illustration of these ideas.
% In [1], the authors construct an estimate of the transfer operator when a considering a long time series generated by the chaotic attractor in the Lorenz system. In this way the authors menage to compute the complex eigenvalue with largest magnitude of such a transfer operator. Moreover, the corresponding eigenvector is also complex.
% Since the Lorenz attractolies in $\mathbb{R}^3$ we may visualise the entries of complex eigenvector on the attractor.
% In the left panel of Figure \ref{fig:lorenz} we display the magnitude of the complex entries of this eigenvector on a blue colourscale, and in the right panel we show the argument of the complex entries on a periodic colourscale.}
%\m{An example from \cite{FGLPS21} provides an initial illustration of these ideas. In \cite{FGLPS21}, the authors estimate the transfer operator by considering a long time series generated by the chaotic attractor in the Lorenz system. This approach allowed them to compute the complex eigenvalue with the largest magnitude of the transfer operator. Additionally, the corresponding eigenvector is complex. Since the Lorenz attractor lies in $\mathbb{R}^3$, we can visualize the entries of the complex eigenvector on the attractor. In the left panel of Figure \ref{fig:lorenz}, the magnitude of the complex entries of this eigenvector is displayed on a blue color scale, while the right panel shows the argument of the complex entries on a periodic color scale.}
\begin{figure}[h!]
    \centering
\includegraphics[width=0.49\linewidth]{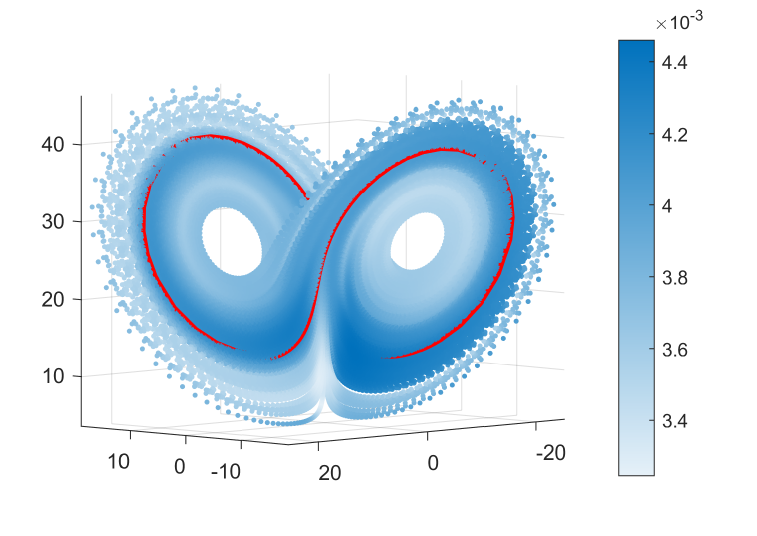}
\includegraphics[width=0.49\linewidth]{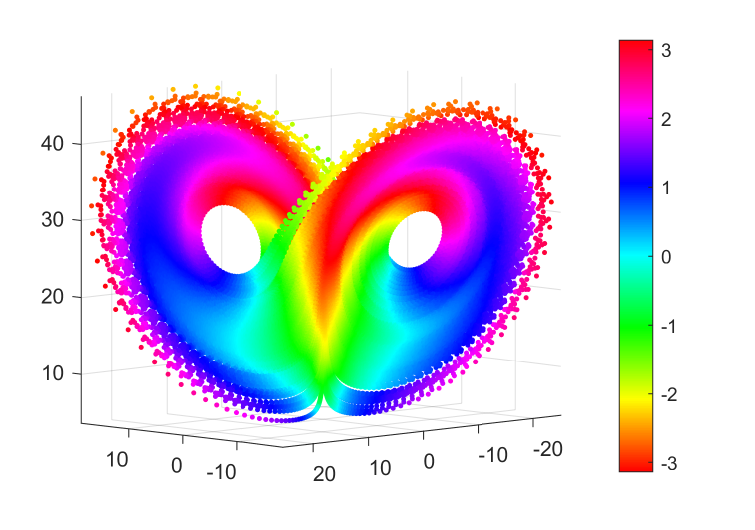}
    \caption{Left: Magnitude of the leading complex eigenvector of the normalised transfer operator for the Lorenz flow. A peak in magnitude occurs in the vicinity of the lowest-period periodic orbit of the Lorenz equations with a period of $1.5586$ time units (shown in red, see Fig.\ 10a \cite{FP09} and \cite{FGZ93}). 
    Right: The argument of the complex eigenvector. In the periodic colormap, one sees a full rotation around each of the two wings as the argument proceeds from $-\pi$ to $\pi$;  see \cite{FGLPS21}.}
    \label{fig:lorenz} 
\end{figure}
In this example, the argument of the eigenvalue associated with this complex eigenvector corresponds to a cycle of period $0.770$.
This period is roughly half that of the lowest-period periodic orbit of the Lorenz equations, which loops once around each wing (displayed in red in Figure \ref{fig:lorenz}(left)).
We make this comparison to emphasise that we are \emph{not} aiming to detect single periodic orbits, but instead to capture large regions in the state space that revolve with an approximately constant period.
The eigenvector detects that the dominant cycle in the Lorenz system consists of wide bands on each wing that rotate around each wing with a period of roughly $0.770$.
%This duration happens to be approximately half of the period of the shortest periodic orbit, which loops once around each wing.
%corresponding to an average wing-cycle length of $1.558652/2\approx 0.779$. 
%In this \m{(In the afordmentioned)} nontrivial example, the rotation around the central parts of the two ``wings'' of the attractor is captured as the dominant cycle. 
The symmetry of the Lorenz equations under $(x,y,z)\mapsto (-x,-y,z)$, is reflected in the representation of the cycle in Figure \ref{fig:lorenz}(right): \rev{the cycle phase given by the argument respects this symmetry}, see \cite{FGLPS21} for details.

Our focus in this  work is to provide a formal explanation for the behaviour of the spectrum and eigenfunctions of the transfer operator in the presence of coexisting cycles with different periods.
Roughly speaking, \textit{eigenfunctions corresponding to cycles with distinct periods should have supports that are largely disjoint}, because large overlaps would need to simultaneously rotate at two distinct speeds.
Our formal analysis concentrates on a mathematical model with multiple coexisting cycles, acting on a discrete collection of circles, between which trajectories may randomly jump under the influence of noise.
%This map preserves Lebesgue measure on the cylinder
Our state space $M$ is the union of $N$ circles:  $M:=\{1,\ldots,N\}\times \mathbb{S}^1$, which we view as a cylinder that has been discretised along its central axis.
The dynamics on the $j^{\rm th}$ circular band of the discretised cylinder is a rotation with speed $\alpha_j$;  \rev{see Figure \ref{fig:model}}.
\begin{figure}
    \centering
\includegraphics[width=0.5\linewidth]{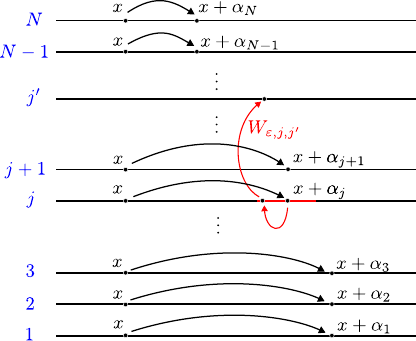}
\caption{\rev{Illustration of noisy rotational dynamics on circular fibres. Each horizontal black line represents a copy of $S^1$. In one iteration of the dynamics, a point $x$ on the $j^{\rm th}$ fibre is (i) rotated by $\alpha_j$ to $x+\alpha_j$, (ii) a random rotation drawn uniformly from the interval $(-\delta,\delta)$ is added (shown as a red interval), and (iii) finally one moves from fibre $j$ to fibre $j'$ according to the Markov chain with conditional probability $W_{\e,j,j'}$. 
In this figure, fibres 1, 2, and 3 all rotate with the same speed, $\alpha_1=\alpha_2=\alpha_3$. Similarly, the fibres $j$ and $j+1$ rotate at the same speed, as do the final group of fibres numbered $\ldots,,N-1,N$.}}
    \label{fig:model}
\end{figure}
We are particularly interested in the case where several neighbouring circular fibres rotate with the same speed, \rev{as in  Figure \ref{fig:model33}}.
\begin{figure}
    \centering
\includegraphics[width=0.5\linewidth]{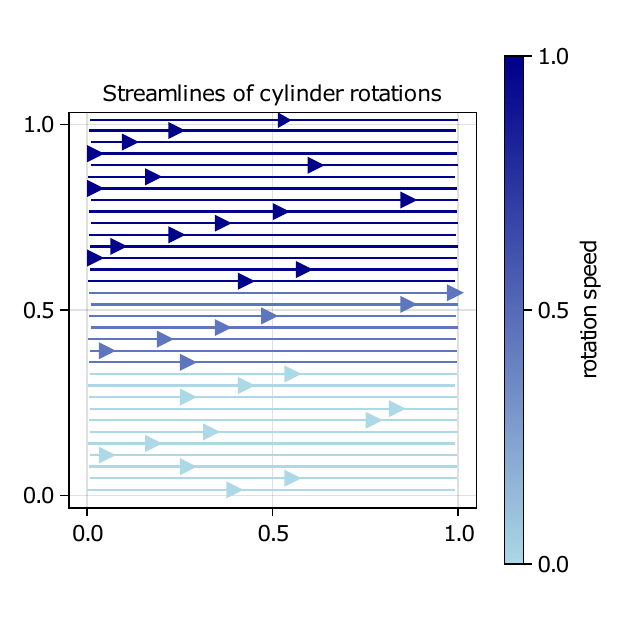}
    \caption{Streamlines of the cylinder rotation model with three bands of circular fibres with widths $L_1=11, L_2=7, L_3=15$ and rotation speeds  $\beta_1=\pi/20, \beta_2=e/7, \beta_3=1/\sqrt2$, respectively.}
    \label{fig:model33}
\end{figure}
The dynamics on these groups of fibres model bands of fluid rotating with a common speed.

The addition of noise is a crucial selection procedure that promotes important eigenfunctions of the transfer/Koopman operator by associating them with large-magnitude eigenvalues.
This heuristic is classical for the eigenvalue 1 where the eigenfunction corresponds to the invariant measure \cite{Kifer1974, Khasminskii,LSY, Alves2007}, \rev{which describes the state-space distribution of long trajectories,} and for large positive real eigenvalues in connection with almost-invariant sets and metastability \cite{DellnitzJunge99}.
Moreover, the addition of noise may also be useful for proving convergence of numerical schemes \cite{DellnitzJunge99,keller1999stability,crimmins2020fourier}.
%\gf{sorry to suggest this, but maybe thermo formalism is a bit ``tangential''? should we remove?}\mmccomment{Can we put the 2 refs together with the kiffer, kahsminskii, LSY and change eigenvalue 1 to "top eigenvalue"?} 
It has also been applied in the context of thermodynamic formalism, where noise perturbations identify equilibrium states by associating them with large-magnitude eigenvalues of suitable operators \cite{BCL2024,BC2025}. 
Further motivations for considering the addition of noise are:
\begin{enumerate}
    \item models and observations are imperfect and often subject to small errors; 
    \item \rev{many} numerical approaches to estimating transfer and Koopman operators introduce artificial noise or diffusion through reduction to a finite-rank representation of the operator; 
    \item the addition of noise enables formal discussion of spectra and eigenfunctions of transfer operators acting on simple spaces such as $L^2$;
    \item zero-noise limits are difficult limits of singular type, yet  remain relevant and are the type observed in numerical experiments.
\end{enumerate}

%, \cite{lasota2013chaos},
%\cite{froyland2014detecting} \m{Is this supposed to be in the item above?}\gf{Gary to place in text or remove.}

%Large-magnitude complex eigenvalues and their eigenfunctions have been shown to be connected with almost-cycles \cite{FGLPS21,FGLS24}. 
This work builds fundamental theory to support the heuristic notion that the addition of noise \textit{promotes eigenfunctions connected with dominant cycles toward the largest-magnitude complex eigenvalues}. 
We achieve this by conducting a formal analysis of a model system that possesses the relevant cyclic properties. 
We perturb this model with noise and carefully study the zero-noise limit, fully characterising the limiting eigenvalues and the form of the limiting eigenfunctions. 
Moreover, we prove quadratic response of the eigenvalues and linear response of the eigenfunctions, the form of which demonstrates a \textit{strong robustness of the \rev{support-localisation of the} eigenfunctions to a range of \rev{small} noise}.

To introduce noise to our model we use a \rev{discrete-time Markov process} between the circular fibres \rev{governed by an $N\times N$ stochastic transition matrix}
%weights given by a symmetric $N\times N$ doubly-stochastic matrix 
$W_\e\ge 0$ that converges to the identity matrix as $\e\to 0$.
\rev{For example, if we have $N=33$ fibres as in Figure \ref{fig:model33}, the matrix \begin{equation}
\label{Weps1d}
    W_\e :=\begin{pmatrix}
1-\frac{\e}{2} & \frac{\e}{2} & & &  &\\
\frac{\e}{2} & 1-\e & \frac{\e}{2} &  &  & \\
 & \frac{\e}{2} & 1-\e & \frac{\e}{2} &  &  \\
 &  &  \ddots& \ddots & \ddots & \\
 &  &  & \frac{\e}{2} & 1-\e & \frac{\e}{2} \\
 &  &  &  & \frac{\e}{2} & 1-\frac{\e}{2} 
\end{pmatrix}.%\label{Wdefn}
\end{equation}
will be suitable for our model.
%Observe that $W_\e$ corresponds to a  defined by
}
Additionally, on each circular fibre we apply i.i.d.\ additive noise.
In summary, the dynamics $T:M\to M$ is given by 
\rev{$T(j,x) = (j, x+\alpha_j) + (\gamma, \eta)$, where $\gamma$ is selected according to the Markov chain $W_{\epsilon}$ conditioned on being in state $j$, %probabilities $W_{j, j+\gamma}$, 
and $\eta$  is} an i.i.d.\ random variable with values uniformly distributed in $[-\delta,\delta]$.
\rev{Equivalently,}
\rev{for each $\e>0$ we consider a Markov chain $X_n^{\e,\delta}=(I_n^\e,Y_n^{\e,\delta})$ on $M$, where $I_n^\e$ is a 
%symmetric 
Markov chain on $\{1,\ldots,N\}$ with transition matrix $W_\e$ satisfying
$
W_{\e,i,j}=\mathbb P\left(I_1^\e=j\,\middle|\,I_0^\e=i\right)$.
%,\qquad i,j\in\{1,\ldots,N\}.
%We assume that $W_\e\to\mathrm{Id}$ as $\e\to0$.
The second component evolves according to
$
Y_{n+1}^{\e,\delta}:=Y_n^{\e,\delta}+\alpha_{I_n^\e}+\eta_n^{\delta},
$
where $(\eta_n^{\delta})_{n\ge0}$ are i.i.d.\ random variables, each uniformly distributed on $[-\delta,\delta]$ as above.}

For this model, we demonstrate  a separable structure of the eigenfunctions of the transfer operator (Proposition \ref{prop:1})  and use this structure to generate a series of precise results regarding zero-noise limits.
We prove that as the noise $\e$ approaches zero, the eigenvalues converge to a complex exponential given by one of the rotation speeds, \rev{linearly in $\e$ with an explicit bound for the distance from the limit} (Proposition \ref{prop:1}).
When there are bands of neighbouring \rev{circular} fibres that share a rotation speed,  \textit{clusters} of eigenvalues approach a single complex exponential (Theorem \ref{thm:1} and Figure \ref{fig:pertmatrixfig}(left)). 
We prove that in the $\e\to 0$ limit, the corresponding eigenfunctions approach an orthonormal basis supported on the band (Theorem \ref{thm:1} \rev{and Figure \ref{fig:fullefuns}}).
Despite having multiple eigenvalues converging to a single eigenvalue, we develop a quadratic response formula for the eigenvalues and linear response formulae for the eigenfunctions (Theorem \ref{thm:2}).
The supports of the latter \rev{are also restricted to their associated band}, resulting in this support localisation property being particularly robust for small noise levels.
Proposition \ref{prop:LRA} covers the response of \rev{eigenvalues and eigenvectors of the transfer operator} with respect to \rev{perturbations of} the rotation speeds $\{\alpha_j\}_{j=1}^N$.

An outline of the paper is as follows.
\rev{In Section \ref{sec:2},} we construct our model, \rev{with dynamics as shown in Figure \ref{fig:model}}, and state our main results.
\rev{Section \ref{sec:2} contains a detailed running example that illustrates and interprets the conclusions of our theory.
}
%Section \ref{sec:casestudy} provides a detailed case study illustrating and interpreting our main results for the example presented in Figure \ref{fig:model}.  
\rev{For this running example, Figure \ref{fig:model33} illustrates the fluid-band structure of the model,} Figure \ref{fig:pertmatrixfig} summarises the behaviour of the most important (lowest-order) eigenvalues and eigenvectors for modest positive $\e$, Figure \ref{fig:fullefuns} visualises the magnitude and argument of the corresponding eigenfunctions on the full cylindrical domain, and Figure \ref{fig:respmatrixfig} summarises the response behaviour of these eigenvalues and eigenvectors.
The proofs of the main results are carried out in Sections \ref{sec:3}--\ref{sec:alpharesp}.

\section{Model and statements of main results}
\label{sec:2}
\subsection{The model}

%In what follows, we describe the transfer (Perron--Frobenius) operator associated with the Markov chain
%$X_n^\e=(I_n^\e,Y_n^\e)$. 

% \mmc{A direct computation using the symmetry assumption $W_{\e,i,j}=W_{\e,j,i}$ shows that, for any $f,g\in  L^2(M,m)$,
% \begin{align*}
% \int_M (\mathcal K_{\e,\delta}f)\,g\,dm
% &=\sum_{i=1}^N\int_{\mathbb S^1}
% \left(\frac1{2\delta}\int_{-\delta}^{\delta}\sum_{j=1}^N
% W_{\e,i,j}\,f\left(j,x+\alpha_i+\omega\right)\,d\omega\right) g(i,x)\,dx\\
% &=\sum_{j=1}^N\int_{\mathbb S^1} f(j,y)
% \left(\sum_{i=1}^N W_{\e,j,i}\,\frac1{2\delta}\int_{-\delta}^{\delta}
% g\bigl(i,y-\alpha_i-\omega\bigr)\,d\omega\right)dy \\
% &=\int_{\mathbb S^1} f(j,y)
% \left(\sum_{i=1}^N W_{\e,i,j}\,\frac1{2\delta}\int_{-\delta}^{\delta}
% g\bigl(i,y-\alpha_i-\omega\bigr)\,d\omega\right) \d m(j,y) 
% \end{align*}}
% \mmc{where we performed the change of variables $y=x+\alpha_i+\omega$ (with addition taken in $\mathbb S^1$)
% and then exchanged the order of summation and integration.}

% \mmc{Therefore, the Perron--Frobenius operator $\mathcal P_{\e,\delta}=\mathcal K_{\e,\delta}^*$ is given by
% \begin{align}
% \mathcal P_{\e,\delta}g(j,y)
% =\sum_{i=1}^N W_{\e,i,j}\,\frac1{2\delta}\int_{-\delta}^{\delta}
% g\bigl(i,y-\alpha_i-\omega\bigr)\,d\omega. \label{eq:skewproduct1}
% \end{align}
%  }

 The Perron--Frobenius operator corresponding to the Markov chain on the fibres $\{1,...,N\}$ is given by
%\m{\begin{align*}
%     \mathcal{P}_{base,\e}:L^1(\{1,\ldots,N\})&\to L^1(\{1,\ldots,N\})\\
%     f&\mapsto \sum_{\gamma\in \pm\{0,1,\ldots,N\}} W_{\e,j,j+\gamma} f(j-\gamma)
% \end{align*}}
 \begin{equation}
     \label{Pbase}
\mathcal{P}_{\mathrm{base},\e}f(j'):=\sum_{j=1}^N W_{\e,j,j'} f(j),
\end{equation}
%with the convention that if $j+\gamma\notin\{1,\ldots,N\}$, then $W_{\e,j,j+\gamma}=0$.
On the $j^{th}$ band, the annealed transfer operator 
% \m{\begin{align*}
% \mathcal{P}_{\delta\restriction j}:L^1(\mathbb{S}^1)&\to L^1(\mathbb{S}^1)\\
% h&\mapsto \frac{1}{2\delta}\int_{[-\delta,\delta]} h(x-\alpha_j-\eta)\ d\eta.
% \end{align*}}
$\mathcal{P}_{\delta\restriction j}:L^1(\mathbb{S}^1)\to L^1(\mathbb{S}^1)$ is given by 
\begin{equation}
\label{jfibreTO}
\mathcal{P}_{\delta\restriction j} h(x):=\frac{1}{2\delta}\int_{[-\delta,\delta]} h(x-\alpha_j-\eta)\ d\eta.
\end{equation}

The product of the discrete uniform probability measure on $\{1,\ldots,N\}$ and normalised Lebesgue measure on $\mathbb{S}^1$ is denoted by $m$.
Using the form of $T$, and letting $F\in L^1(M)$, the annealed transfer operator with respect $m$ is given by 
\rev{$$\mathcal{P}_{\e,\delta}F(j',x)=\sum_{j=1}^N W_{\e,j,j'}\frac{1}{2\delta}\int_{[-\delta,\delta]} F(j , x-\alpha_{j}-\eta)\ d\eta.$$}
%\gf{Maybe the computations below are too trivial, but I thought it is good to discuss the general skew-product structure before the separable structure of the eigenfunctions.}
We now write $F(j,x)=f(j)h_j(x)$ to highlight the skew-product nature of $T$ and $\mathcal{P}_{\e,\delta}$.
\rev{\begin{align}
\nonumber\mathcal{P}_{\e,\delta}F(j',x)&=%\sum_{\gamma\in \pm\{0,1,\ldots,N\}} W_{\e,j,j+\gamma}\frac{1}{2\delta}\int_{[-\delta,\delta]} f(j-\gamma)h_{j-\gamma}(x-\alpha_{j-\gamma}-\eta)\ d\eta \\
%\nonumber&=
\sum_{j=1}^N W_{\e,j,j'}f(j)\frac{1}{2\delta}\int_{[-\delta,\delta]} h_{j}(x-\alpha_{j}-\eta)\ d\eta \\
&=\sum_{j=1}^N W_{\e,j,j'}f(j)\mathcal{P}_{\delta\restriction j}h_{j}(x) 
=\mathcal{P}_{\mathrm{base},\e}f(j)\cdot\mathcal{P}_{\delta\restriction j}h_{j}(x). 
\label{skewproduct}
\end{align}}

Because each $\mathcal{P}_{\delta\restriction j}$ is the transfer operator for a rotation on the $j^{th}$ circle by $\alpha_j$ (plus uniform bounded noise), the eigenfunctions of $\mathcal{P}_{\delta\restriction j}$ are complex exponentials of increasing order, \emph{independent of $j$}.
Thus, as we shall shortly see, the eigenfunctions of $\mathcal{P}_{\e,\delta}$ are \textit{separable in the variables $j$ and $x$}.

% \gf{example goes here.}
% where
% $w_\gamma$ are the random walk probabilities (e.g.\ $w_{\pm 1}=\e/2$ and $w_0=1-\e$). \gf{I did not include boundary conditions here for the weights, but maybe it is clear enough?}

\subsection{General behaviour of eigenvalues and eigenfunctions}

\subsubsection{The case of $\e=0$.}

Before we begin our main analysis, it is informative to consider the simple situation where $\e=0$ and $\delta\ge 0$.
\rev{This corresponds to no noise between the circular fibres, and $\delta$-noise along fibres.}
With no noise connecting the distinct copies of $\mathbb{S}^1$, the rotations on each circle all individually contribute to generate the full eigenspectrum of $\mathcal{P}_{0,\delta}$.

\begin{proposition}
\label{prop:eps0}\rev{
The spectrum of $\mathcal{P}_{0,\delta}$  consists of the $N$-fold union of the eigenvalues $\bigcup_{j=1}^N \sigma(\mathcal{P}_{\delta\restriction j})$, where}
\begin{align}
\rev{\sigma(\mathcal{P}_{\delta\restriction j})=\bigcup_{k\in\mathbb{Z}}\left\{\frac{\sin(2\pi k\delta)}{2\pi k\delta}e^{-2\pi i k\alpha_j}\right\}\cup\{0\}.} \label{eq:eq1}
\end{align}
\end{proposition}
\begin{proof}
\rev{
In this case $W_\e = \mathrm{Id}$, therefore \eqref{skewproduct} implies that given $F(j,x) = \sum_{k\in\mathbb Z} f_k(j) e^{2\pi i k x}\in L^2(M)$.
Using \eqref{jfibreTO} one has 
\begin{align*}
    \mathcal P_{0,\delta} F(j,x) &= \sum_{k\in\mathbb Z}  f(j) \mathcal P_{\delta\restriction j} (e^{2\pi i k x})= \sum_{k\in\mathbb Z} f(j)\frac{1}{2\delta} \int_{-\delta}^{\delta} e^{2\pi i k (x- \alpha_j - \eta)} \d \eta\\
     &=\sum_{k\in\mathbb Z} \left(\frac{\sin(2\pi k\delta )}{2\pi k \delta}  e^{-2\pi i k \alpha_j } \right) f(j)  e^{2\pi i k x}, 
\end{align*}
where $\sin(0)/0$ should be interpreted as $1$ in the above equation. Thus, by comparing the Fourier modes of $\mathcal P_{0,\delta} F$ and $\lambda F$ we obtain \eqref{eq:eq1}.}
\end{proof}
\rev{The modes $k=\pm 1$ are the primary modes of oscillation corresponding to a single oscillation around the circle, as shown by the $e^{2\pi ikx}$ term in the final display equation above. In practice, selecting $k=\pm 1$ makes the extraction of the $\alpha_j$ from an eigenvalue straightforward because of the argument term of $e^{-2\pi ik\alpha_j}$.}
\rev{For positive $\delta$ the damping factor $\frac{\sin(2\pi k\delta)}{2\pi k\delta}$ suppresses higher-order Fourier modes. 
In particular, the  $k=\pm1$ modes have the largest magnitude (apart from the trivial constant mode),}
\rev{making the identification of the $k=\pm 1$ modes simpler.}

\begin{remark}
All of the results in Section \ref{sec:2} can be easily generalized from the state space $M=\{1,\ldots,N\}\times S^1$ to a state space $M = \{1, \ldots, N\} \times \mathbb{T}^d$ where:
\begin{enumerate}
    \item the $\alpha_j$-rotations on the circle $S^1 = \mathbb T^1$ are replaced with $(\alpha_{j,1},\ldots, \alpha_{j,d})$-translations on the $d$-torus $\mathbb T^d$. 
\item the discrete fibres $\{1,\ldots,N\}$ are no longer discretely filling an interval, but discretely fill a connected subset of $\mathbb{R}^{d'}$.
 \end{enumerate}
Such a situation allows one to create models of rather general rotational dynamics.
For example, a simple extension to $d=1$ and $d'=2$ would model fluid rotation around a solid three-dimensional torus, with the $N$ fibres discretely filling out the non-periodic two-dimensional cross-section of the solid torus.
Subsets \rev{$B_s$} of $\{1,\ldots,N\}$ called \emph{band \rev{members}} (see Definition \ref{def:cluster1}) represent solid ``tubes'' \rev{$B_s\times S^1\subset \mathbb{R}^2\times S^1$} of three-dimensional fluid that rotate at a common angular speed inside a solid three-dimensional torus.
Alternatively -- remaining in a solid three-dimensional torus -- we may set $d=2$ and $d'=1$ to model translations on two-dimensional nested toral shells within the solid torus.
Bands (subsets of $\{1,\ldots,N\}$) represent nested families of solid tori \rev{$B_s\times \mathbb{T}^2\subset \mathbb{R}\times\mathbb{T}^2$} (like Matryoshka dolls) with common translations on each member of the family. 

In this general setting, the spectrum of $\mathcal{P}_{0,\delta}$ is 
\begin{equation*}
\bigcup_{j=1}^{N}\sigma(\mathcal{P}_{\delta\restriction j})=\bigcup_{j=1}^{N}\bigcup_{k\in\mathbb{Z}^d}\left\{\prod_{q=1}^d\frac{\sin(2\pi k_q\delta)}{2\pi k_q\delta}e^{-2\pi i k_q\alpha_{j,q}}\right\}\cup\{0\}
%\label{eq:eq1general}
\end{equation*}
\rev{where $\alpha_{j,q}$ is the rotation speed on fibre $j$ along coordinate direction $q$.}
Concerning the extension from $S^1$ to $\mathbb{T}^d$, the same proofs of all of our results follow with minor adaptations.
For the extension of the fibres $\{1,\ldots, N\}$ filling out sets in $\mathbb{R}^{d'}$, one only needs to appropriately adjust the matrices $W_\epsilon$.
For example, in $d'$ dimensions, a natural choice would be to construct $W_\epsilon$ from a finite-difference stencil for the Laplace operator on a regular grid in $d'$ dimensions (see \eqref{Weps1d} for such a choice for $d'=1$).
For the sake of simplicity of exposition, we restrict the statements and proofs of all results to the case $d = 1$. 
\end{remark}

%\m{You can ignore this, I was just trying to write something with what you have written, not work well though. Also, I fixed the other things that we have talked today in our meeting.}\gf{Thanks! I was thinking of making the comment below a remark to separate it from the main text.  Is it ok to do that?} \m{Yes, I think it is a good idea!}
%\gf{thanks, I am slow getting to it, I will write now.}

%\m{MAYBE BE DELETED/CHANGED}
%\m{We also mention that the fibres $\{1,\ldots,N\}\times \mathbb S^1$ need not be one-dimensional: they can instead discretely fill a compact, connected subset of $\mathbb{R}^{d}$, for some $d \geq 1$. For example, in a three-dimensional fluid, one might seek coherent "solid tubes" of fluid that rotate with a common angular speed. This would correspond to a structure of the form ${1, \ldots, N} \times \mathbb{S}^1$, but with fibres that fill a two-dimensional domain (e.g., a unit square) rather than a one-dimensional interval. In general, the operators $W_\epsilon$ associated with the fibres can be chosen freely — for instance, as finite-difference approximations of the Laplacian on a $d'$-dimensional domain. While one could describe more precisely how the spectrum depends on the toral frequencies $\alpha_j$ in the various directions of $\mathbb{T}^d$, we refrain from doing so here for the sake of clarity.}

\subsubsection{The case of $\e>0$.}
In the remainder of this work, we will assume that $\delta=0$ \rev{(no along-fibre noise)} because
\begin{enumerate}
    \item the effect of $\delta$ on the spectrum is simply to multiply each eigenvalue by $\frac{\sin(2\pi k\delta)}{2\pi k\delta}$, 
    \item the eigenfunctions are unaffected by changes in $\delta$.
%\gf{I'm not sure how important or confusing this is, but doesn't it basically elaborate on the first point?} \mmc{\textbf{I am happy to remove it.}}
    \end{enumerate}
Hereafter we will denote $\mathcal{P}_{\varepsilon,\delta}$ by simply $\mathcal{P}_\varepsilon$.
When $\e>0$ we no longer have a closed form expression for the eigenvalues and eigenfunctions of  $\mathcal{P}_{\e}$, but we can % analyse 
characterise them as eigenvalues and eigenvectors of the simple matrix product $D_{k,\alpha} W_\e$ where
\begin{equation}
    \label{Dkalpha}
D_{k,\alpha}=\mathrm{Diag}(e^{-2 \pi i k \alpha_1 }, \ldots, e^{-2 \pi i k \alpha_N }),
\end{equation}
and analyse their behaviour for small $\e>0$ as $\e\to 0$.

\begin{proposition}\label{prop:1}
Let $\alpha =\left(\alpha_1, \ldots, \alpha_N \right)\in \mathbb{R}^N$. 
\begin{enumerate}
    %\item \gf{Start with eigenvalues or wrap eigenvalues into the next item with the eigenfunctions.}
    \item For every $\e>0$, \rev{each $k\in \mathbb{Z}$ gives rise to $N$ (generalised) eigenfunctions $F_{k,\e}^{(\ell)}$ of the operator $\mathcal{P}_{\e} : L^2(M) \to L^2(M)$, of the separable form 
    \begin{equation}
    \label{eq:F}   F_{k,\e}^{(\ell)}(j, x) = f_{k,\e}^{(\ell)}(j) e^{2\pi i k x}\qquad\mbox{for all $\ell \in \{1,\dots,N\}$,}
    \end{equation}
    where the corresponding $N$ eigenvalues are denoted $\lambda_{k,\e}^{(\ell)}, \ell=1,\ldots,N$.} 
         
    %For sufficiently small $\e$ depending on $k$, there are $N$ eigenfunctions, each with unit geometric multiplicity, and each a continuous function of $\e$.
    \item \rev{For sufficiently small $\e>0$ one has 
    \begin{equation}
        \label{gbound}
    |\lambda_{k, \e}^{(\ell)} -e^{-2 \pi i k \alpha_\ell}|\le 2 \max_{1\leq j\leq N} (1- W_{\e,j,j}) \qquad\mbox{for all $k,\ell \in \{1,\dots,N\}$},
    \end{equation}
    and 
    \begin{equation}
     \label{argbound}
\left|\mathrm{arg}\left(\lambda_{k,\e}^{(\ell)}\right) - 2\pi k\alpha_\ell\ (\mathrm{mod}\ 2\pi) \right|\leq 2\pi  \max_{1\leq j\leq N} (1- W_{\e,j,j})\qquad\mbox{for all $k,\ell \in \{1,\dots,N\}$}
\end{equation}}
%\item For each $k\in\mathbb{Z}$ there exists an orthonormal basis $\{f_k^{(\ell)}\}_{\ell=1}^N$ of $\mathbb{C}^N$ such that  
%$$\left\|F_{k,\e}^{(\ell)}(j,x) - f_k^{(\ell)}(j) e^{2\pi i k x} \right\| \xrightarrow[]{\e \to 0} 0 \ \text{in }\mathcal C^0(M).$$
\end{enumerate}
\end{proposition}
\begin{proof}
    See Section \ref{sec:3.1}.
    %\gf{Matheus to adjust proof to accommodate the final expression.  Don't just put ``observe''. Why factor 4 instead of 2?} \mmc{\textbf{Done!
\end{proof}
\rev{We recall that for practical purposes we are interested in the primary oscillations given by the $k=\pm 1$ modes, as discussed immediately after Proposition \ref{prop:eps0}.}
\rev{The bound in \eqref{gbound} is tight;  see Figure \ref{fig:pertmatrixfig}(left), where the elements of $\{\alpha_j\}_{j=1}^{33}$ take on one of three distinct values. The error bounds are shown as dark blue circles.}
\begin{figure}
    \centering
\includegraphics[width=0.49\linewidth]{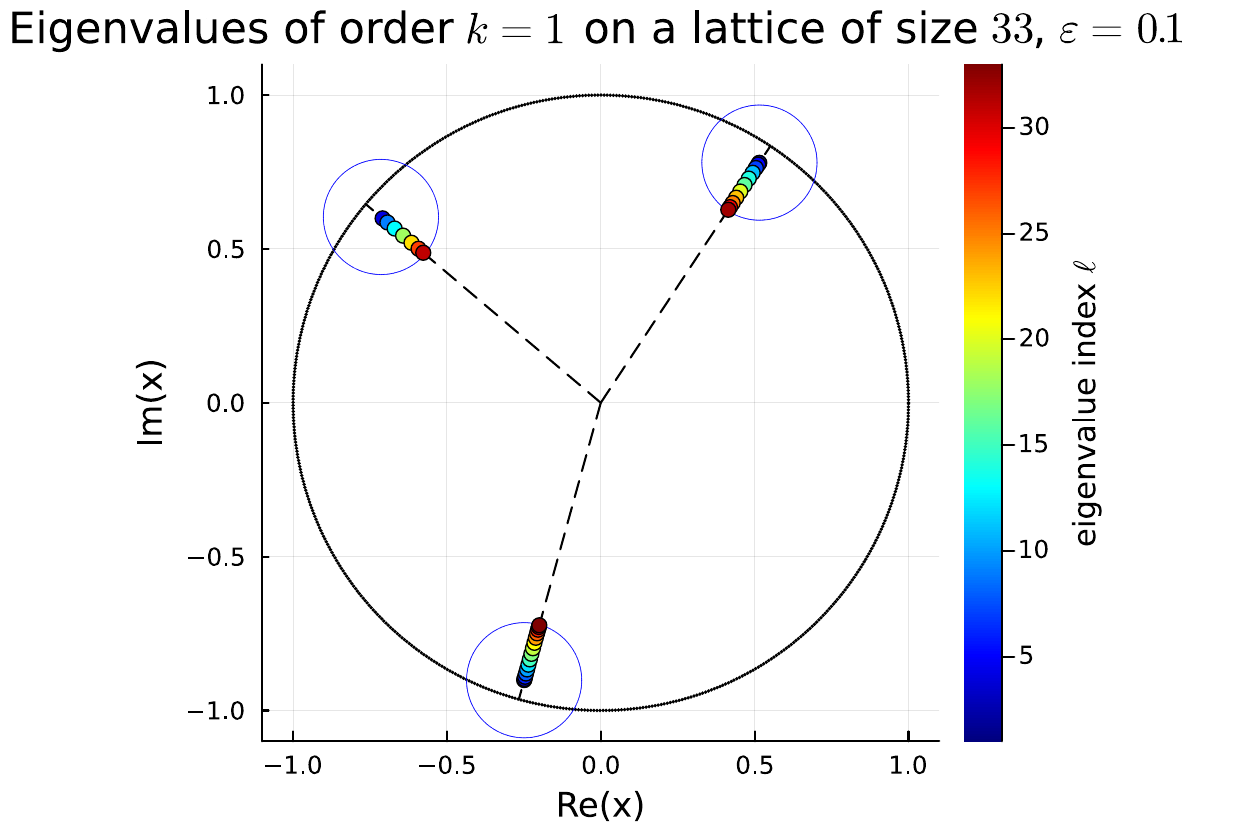}
\includegraphics[width=0.49\linewidth]{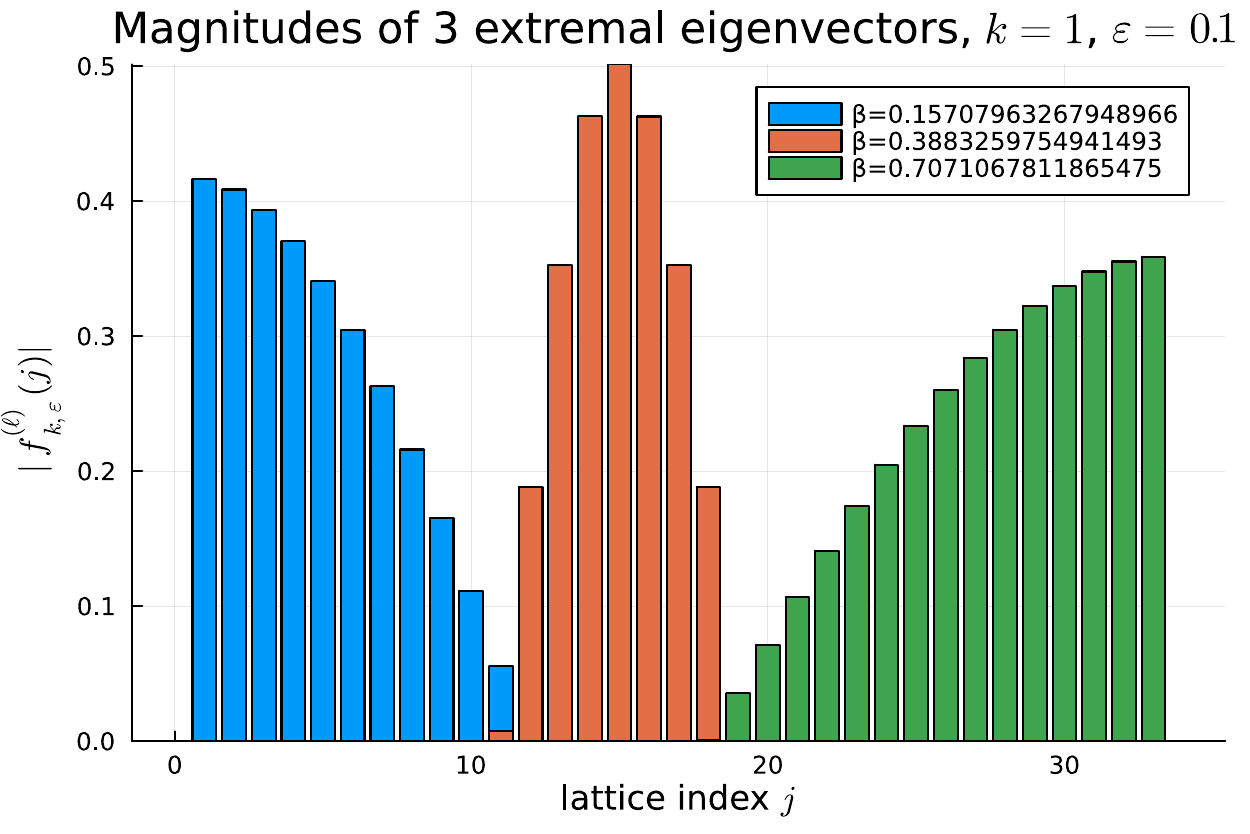}
    \caption{\textit{Left:} The small coloured disks indicate the spectrum $\lambda_{1,\e}$ (for $\delta=\e=0.1$) of the $33\times 33$ matrix $D_{k,\alpha}W_\e = D_{k,\beta,L}W_\e$,  associated to the model from Figure \ref{fig:model33}, where $\beta=(\pi/20, e/7, 1/\sqrt2)$ and $L=(11,7,15)$. The large black circle is the unit circle in the complex plane. Note that the 33 eigenvalues are distinct and that they appear in groups, with $L_i$ eigenvalues nearby $\exp(-2\pi i k \beta_i)$ for $i=1,2,3$. The relevant bounds \eqref{gbound} from Proposition \ref{prop:1} are shown as dark blue circles. 
    %Compare this panel to Figure \ref{fig:TOspectrum};  the latter contains spectral points corresponding to $k=1$ \textit{and} $k=-1$. 
    \textit{Right:} Plot of $|f_{k,\e}^{(\ell)}(j)|$ vs lattice index $j$ for $\ell=1,2,4$, where the ordering of the eigenvalues are shown in the left panel. We plot the eigenvectors of the largest-magnitude eigenvalue from each of the three groups \rev{corresponding to $k=1$};  the eigenvectors have been normalised to have unit norm.
    %This panel corresponds to  a vertical cross section of the magnitude displayed in the upper row of Figure \ref{fig:TOefuncs}.
    }
    \label{fig:pertmatrixfig}
\end{figure}
\begin{figure}
    \centering
\includegraphics[width=0.32\linewidth]{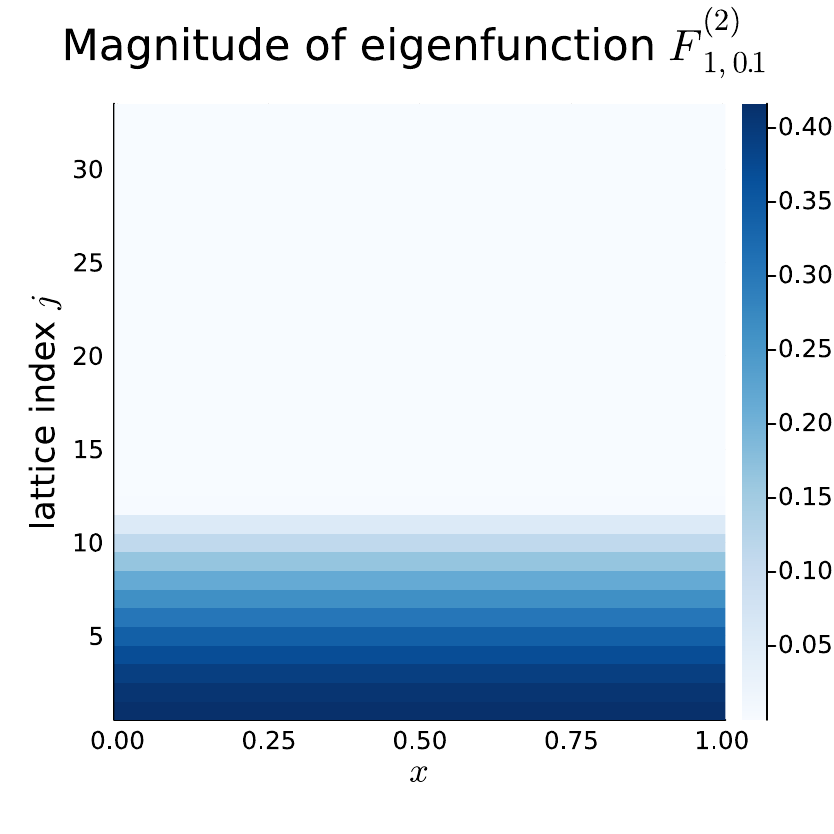}
\includegraphics[width=0.32\linewidth]{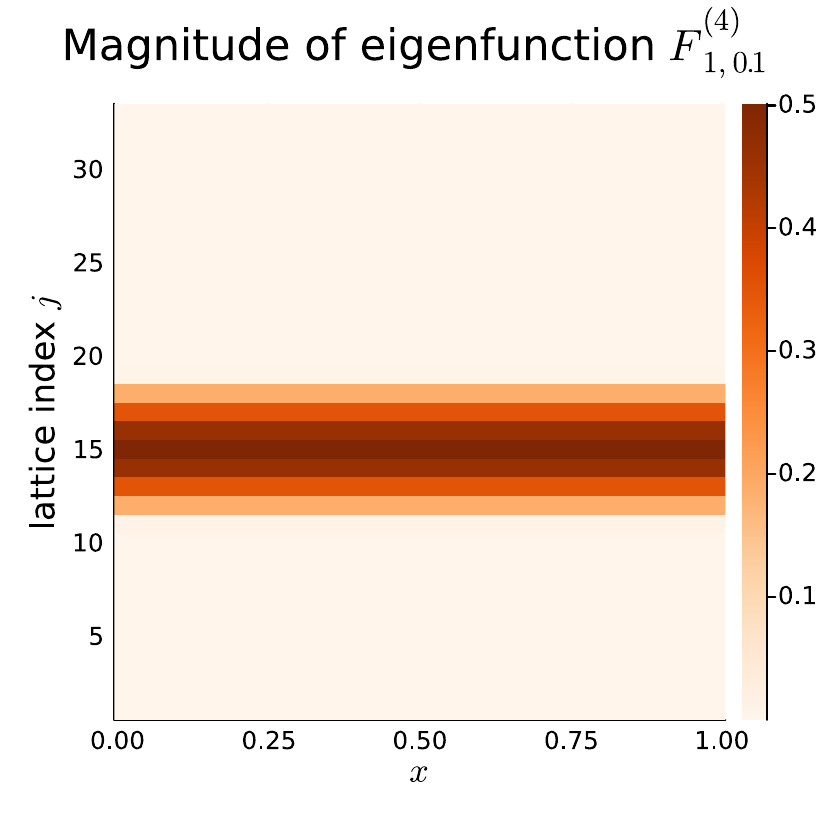}
\includegraphics[width=0.32\linewidth]{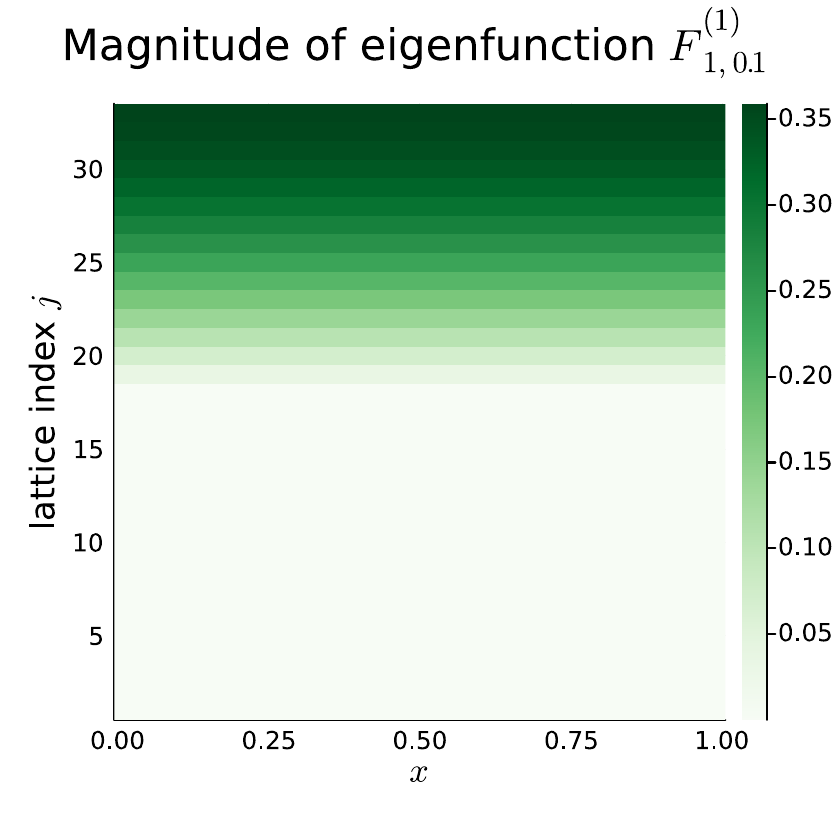}
\includegraphics[width=0.32\linewidth]{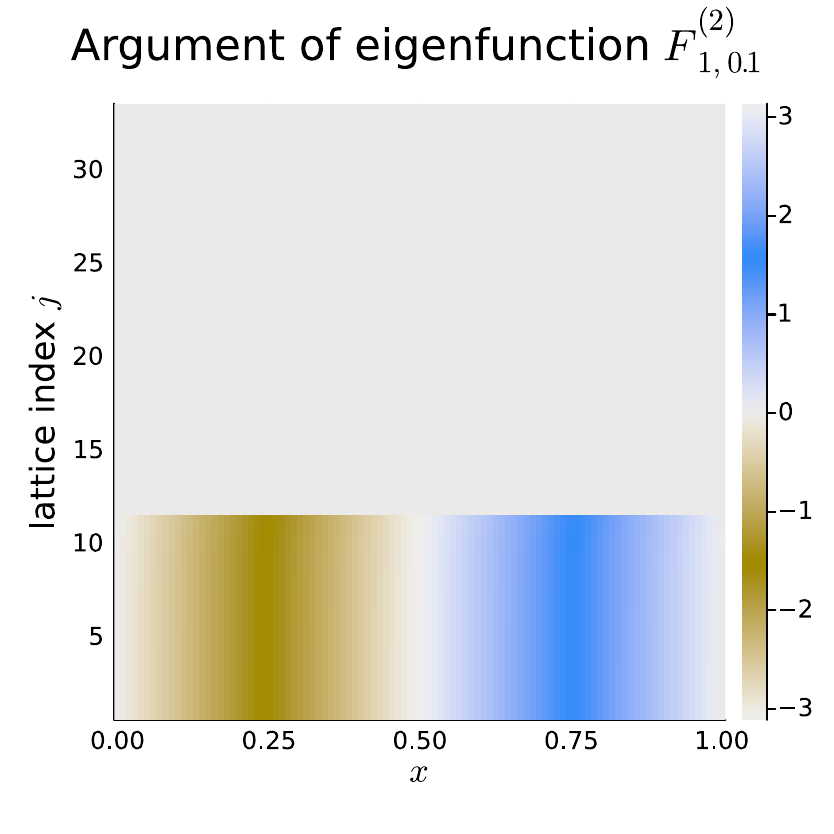}
\includegraphics[width=0.32\linewidth]{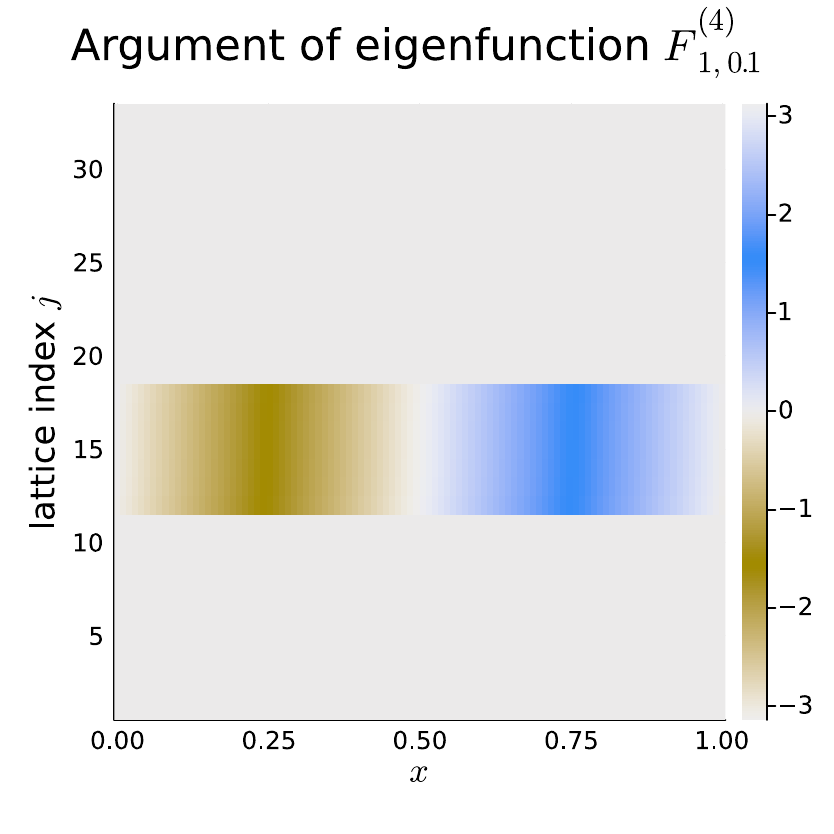}
\includegraphics[width=0.32\linewidth]{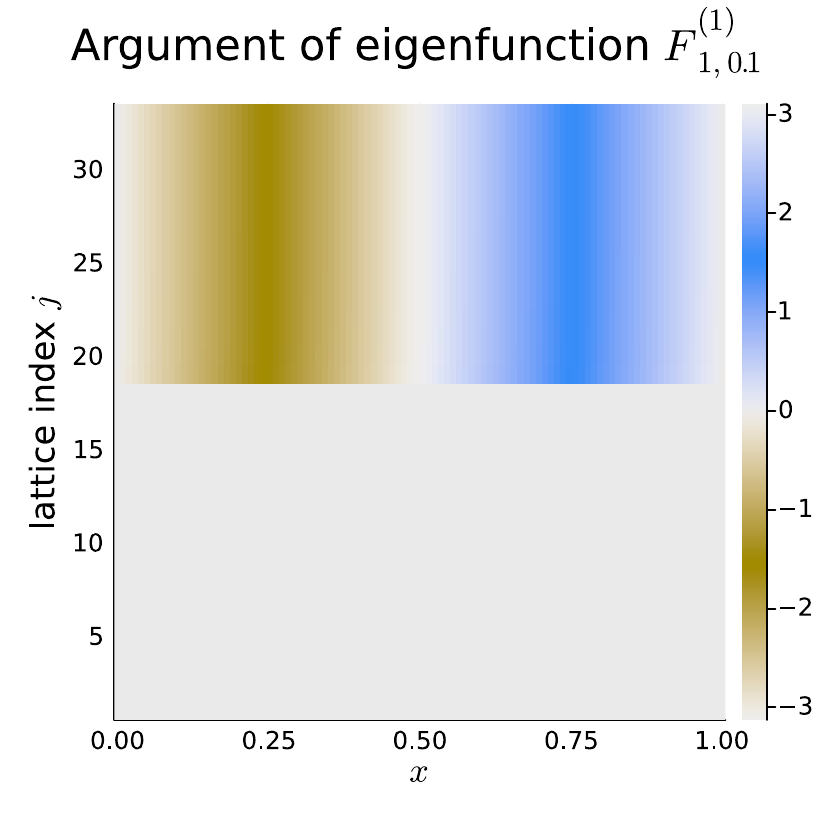}
    \caption{Details of leading eigenfunctions of $\mathcal{P}_\e$ on $M$, constructed from the eigenvectors in Figure \ref{fig:pertmatrixfig}. \textit{Upper row:} Magnitudes of the extremal three eigenfunctions $F_{k,\e}^{(\ell)}(j, x) = f_{k,\e}^{(\ell)}(j) e^{2\pi i k x}$, $\ell=1,2,4$ and $k=1$ on $M=\{1,\ldots,33\}\times \mathbb{S}^1$ corresponding to the eigenvectors $f$ in Figure \ref{fig:pertmatrixfig}, using colourschemes to match the colouring in Figure \ref{fig:pertmatrixfig}. 
    Note that the support of each eigenfunction is approximately restricted to the bands associated with one of the three rotation speeds illustrated in Figure \ref{fig:model}.
    \textit{Lower row:} As for the upper row, but displaying the arguments of eigenfunctions instead of the magnitudes. 
     Note that exactly one complex cycle occurs because $k=1$.
    }
    \label{fig:fullefuns}
\end{figure}

Using Proposition \ref{prop:1}, we assume from now on that the eigenvalues  
$\lambda_{k,\e}^{(1)},\ldots, \lambda_{k,\e}^{(N)}$ are labelled such that  
\begin{align}
    \lambda_{k,\e}^{(\ell)} &\xrightarrow[]{\e \to 0} e^{-2\pi i k\alpha_{\ell}} \ \text{for every}\ \ell \in \{1,\ldots, N\}.\label{eq:lambdanotation}.
\end{align}
% \begin{remark}
% \mmc{    Observe that the above Proposition  \ref{prop:1} implies that $$\left|\mathrm{arg}\left(\lambda_{k,\e}^{(\ell)}\right) - 2\pi k\alpha \ (\mathrm{mod}\ 2\pi) \right|\leq 4\max_{1\leq \ell\leq N}{|\dot{W}_{\ell \ell}|}\e.$$}
% \end{remark}

%Observe that this convention implies that if $\alpha$ is an $S$-banded vector with  band vector $\beta$, then  $\lambda_{k,\e}^{(\ell)} \xrightarrow[]{\e \to 0} e^{2\pi i k \beta_s}$  
%for every $\ell \in B_s$ and each $s \in \{1,\ldots, S\}.$

\subsection{The case of multiple fibres with common rotation speed}
A particularly interesting situation arises when some $\alpha_j$ are identical for neighbouring $j$s.
This corresponds to a band of fluid rotating at the same speed, and handling this physically relevant situation requires a more careful analysis.
This motivates \rev{the following definition}.

% This corresponds to a band of fluid rotating at the same speed, and handling this physically relevant situation requires greater care. This motivates the introduction of clusters below.

\begin{definition}[$S$-banded]  \label{def:cluster1}
A $N$-vector $\alpha =\left(\alpha_1, \ldots, \alpha_N \right)\in \mathbb{R}^N$ is said to be \emph{$S$-banded} if there exist 
\begin{itemize}
    \item[$(i)$] a natural number $1 \leq S \leq N$,
    \item[$(ii)$] distinct real numbers $\beta_1, \ldots, \beta_S \in \mathbb{R}$, and
    \item[$(iii)$]  a vector of positive integer numbers $L = (L_1,\ldots, L_S)$ satisfying $\sum_{i=1}^N L_i  = N,$ 
\end{itemize}
%a natural number $1 \leq S \leq N$, distinct real numbers $\beta_1, \ldots, \beta_S \in \mathbb{R}$ a vector of positive integer numbers $\mathcal L = (L_1,\ldots, L_S)S$ satisfying $\sum_{i=1}^N L_i  = N$ 
%and a sequence of natural numbers $0 = N_0 < N_1 < \cdots < N_S = N$, 
such that 
$$
\alpha_j = \beta_s \quad \text{if } N_{s-1} < j \leq N_s \quad \text{for some } s \in \{1, \ldots, S\},
$$  
where $N_0<N_1<\ldots < N_{S}=N$ is a sequence of recursively defined natural numbers
$$N_0 =0\ \text{and }N_i = L_i + N_{i-1}\ \text{if }i\in\{1,\dots,S\}$$
In the above context, the vector $\beta = (\beta_1, \ldots, \beta_S)$ is referred to \rev{as a} \emph{band vector}, and  $L$ is called the \emph{band-width vector}. Moreover, given $s\in\{1,\ldots,S\}$ we denote the \it{band members of $\beta_s$} as $B_s=\{j\in \N: N_{s-1}< j\leq N_{s}\}.$ Observe that from the convention adopted in \eqref{eq:lambdanotation} we have that 
$\{\ell\in\{1,\ldots,N\}:\lambda^{(\ell)}_{k,\varepsilon}\to e^{-2\pi i k \beta_s}\mbox{ as $\varepsilon\to 0$}\}= B_s.$
%Observe that we can naturally identify the pair $(\beta,L)$ with the vector $\alpha.$
\end{definition}
\begin{example}[$S$-banded]
\,
\rev{\begin{enumerate}
\item any $N$-vector $\alpha=(\alpha_1,\ldots,\alpha_n)$ with distinct entries is $N$-banded,
    \item the $5$-vector $\alpha = (\sqrt{2},\sqrt{2},\sqrt{2},\pi,\pi)$ is $2$-banded with band vector $\beta = (\sqrt{2},\pi)$ and band-width vector $L=(3,2)$,
    \item the $5$-vector $\alpha=(\sqrt{2},\sqrt{2},\pi,\sqrt{2},\sqrt{2})$ is not an $S$-banded vector \rev{for any $1\le S\le N$.}
\end{enumerate}}
\rev{As part of our running example, associated to the 33-fibre case study with $W_\e$ as in \eqref{Weps1d}, we choose $S=3$ bands rotating at incommensurate speeds given by the vector $\beta=(\pi/20, e/7, 1/\sqrt{2})$, and with corresponding band-width vector $L=(11,7,15)$;  see Figure \ref{fig:model33}.}
\end{example}

\rev{Let $\alpha$ be an $S$-banded $N$-vector for some $1\le S<N$, then there is at least one $1\le s\le S$ such that $\beta_s$ has band width $L_s\ge 2$.}
Proposition \ref{prop:1}(2) tells us that for a given Fourier index $k$, $\lambda^{(\ell)}_{k,\e}$ converges to $\exp(-2\pi i k \beta_s)$ for $L_s$-fold $\ell$, this phenomenon is illustrated in Figure \ref{fig:pertmatrixfig} (left).
%\m{Observe that from the convention adopted in \eqref{eq:lambdanotation} we have that 
%$\{\ell\in\{1,\ldots,N\}:\lambda^{(\ell)}_{k,\varepsilon}\to e^{-2\pi i k \beta_s}\mbox{ as $\varepsilon\to 0$}\}= B_s.$}
%$L_s$-fold collection of $\ell$, $\{\ell\in\{1,\ldots,N\}:\lambda^{(\ell)}_{k,\varepsilon}\to e^{-2\pi i k \beta_s}\mbox{ as $\varepsilon\to 0$}\},$ corresponds to the band members of $\beta_s$ $B_s$ (see \ref{def:cluster1}).} %and call such $\ell$ the \emph{band members} of $\beta_s$.
Therefore, one expects an $L_s$-dimensional subspace spanned by the corresponding eigenvectors $F^{(\ell)}_{k,\e}$ to converge to some $L_s$-dimensional space, but the question of convergence of individual eigenvectors and their characterisation is still unclear.
\rev{To study this limit we must slightly constrain the one-parameter family of matrices $W_\e$ that control noise between circular fibres.}

\begin{definition}[$L$-admissible family of matrices] Given an $S$-vector $L=(L_1,\ldots,L_s)$,
\label{Ladmiss}
%Fix a band width vector \( L = (N_1, N_2 - N_1, \dots, N_S - N_{S-1}) \). 
 the family of $N\times N$ matrices $\{W_\e = \mathrm{Id} + \e \dot{W}\}_{\e\geq 0}$  is \rev{called} $L$-admissible if the symmetric matrix $\dot{W}$ satisfies:
\begin{enumerate}
    \item \( \dot{W}_{ij} \geq 0 \) for \( i \neq j\in\{1,\ldots, N\}\) and $\sum_{j=1}^{N} \dot{W}_{ij} = 0, \quad \forall i \in \{1, \dots, N\}$.
    \item  $\dot{W}$ has $N$ distinct eigenvalues.
    \item For each $s\in\{1,\ldots,S\}$, the $L_s\times L_s$ submatrix $\hat{W}_s := \big[\dot{W}_{jk} \big]_{N_{s-1} < j,k \leq N_s}$ has \( L_s \) distinct eigenvalues.
\end{enumerate}  
\end{definition}

\begin{example}[Noise between circular fibres]
\rev{The matrix $W_\e$ in \eqref{Weps1d} is constructed as $W_\e:=\mathrm{Id}+\e \dot W$, where 
\begin{equation}
    \label{Wdot1d}
\dot{W}:=\begin{pmatrix}
    -\frac{1}{2} & \frac{1}{2} & & &  &\\
\frac{1}{2} & -1 & \frac{1}{2} &  &  & \\
 & \frac{1}{2} & -1 & \frac{1}{2} &  &  \\
 &  &  \ddots& \ddots & \ddots & \\
 &  &  & \frac{1}{2} & -1 & \frac{1}{2} \\
 &  &  &  & \frac{1}{2} & -\frac{1}{2} 
\end{pmatrix}.
\end{equation}
This matrix clearly satisfies items (1) and (2) of Definition \ref{Ladmiss} for  any band-width vector $L$.
Item (3) of Definition \ref{Ladmiss} holds because submatrices of $\dot W$ correspond to central difference approximations of the Laplace operator on an interval with either mixed Neumann/Dirichlet boundary conditions (for the first and last submatrix) or Dirichlet boundary conditions (for ``internal'' submatrices) (see e.g.\ \cite[Section 3]{Eigen}).
Thus, $W_\e$ is an $L$-admissible family for any bandwidth vector $L$.
%is $L$-admissible for any $1\le L\le N$. The corresponding $\dot{W}$ is the $33\times 33$ matrix arising from a central-difference approximation of the Laplace operator on $[0,1]$, with Neumann boundary conditions, namely:
}

%  For each $N\in\mathbb N$ the family of $N\times N$ matrices
%  \begin{align}
%      {\small\left\{ \begin{pmatrix}
% 1-\frac{\e}{2} & \frac{\e}{2} & & &  \\
% \frac{\e}{2} & 1-\e & \frac{\e}{2} &    & \\
%    &  \ddots& \ddots & \ddots & \\
%    &  & \frac{\e}{2} & 1-\e & \frac{\e}{2} \\
%    &  &  & \frac{\e}{2} & 1-\frac{\e}{2} 
% \end{pmatrix}\right\}_{\e>0},\ \left\{\begin{pmatrix}
% 1-\e & \frac{\e}{2} & & &  \frac{\e}{2}\\
% \frac{\e}{2} & 1-\e & \frac{\e}{2}   &  & \\
%    &  \ddots& \ddots & \ddots & \\
%    &  & \frac{\e}{2} & 1-\e & \frac{\e}{2} \\
% \frac{\e}{2} &   &  & \frac{\e}{2} & 1-\e 
% \end{pmatrix}\right\}_{\e>0}}\label{eq:matrices}
%  \end{align}}
% \mmc{
% is $L-$admissible for any $L\leq N$, whereas the $4\times 4$ matrix
%  $$\small\left\{ \begin{pmatrix}
% 1-\frac{\e}{2} & \frac{\e}{3} & \frac{\e}{6}&   \\
% \frac{\e}{2} & 1-\e & \frac{\e}{3} &  \frac{\e}{6}   \\
%      & \frac{\e}{2} & 1-\e & \frac{\e}{2} \\
%      &  & \frac{\e}{2} & 1-\frac{\e}{2} 
% \end{pmatrix}\right\}_{\e>0}$$
%  is not an $L$-admissible family of matrices.}\gf{To help connect these matrices with a diffusion process (i.e.\ noise), in the first example, it is probably a good idea to note that these arise from $\dot W$ corresponding to a first-order Taylor estimate of $\exp(\Delta c\varepsilon)$ (maybe some scale factor $c$) of the first-order central difference estimate of Laplace on a line (with reflecting boundary conditions) with $N$ nodes and on a circle with $N$ nodes. Can you add what breaks in the last example? is it just symmetry?}
\end{example}

%  $${\footnotesize\left\{ \begin{pmatrix}
% 1-\frac{\e}{2} & \frac{\e}{2} & & &  &\\
% \frac{\e}{2} & 1-\e & \frac{\e}{2} &  &  & \\
%  & \frac{\e}{2} & 1-\e & \frac{\e}{2} &  &  \\
%  &  &  \ddots& \ddots & \ddots & \\
%  &  &  & \frac{\e}{2} & 1-\e & \frac{\e}{2} \\
%  &  &  &  & \frac{\e}{2} & 1-\frac{\e}{2} 
% \end{pmatrix}\right\}_{\e>0},\ \left\{\begin{pmatrix}
% 1-\e & \frac{\e}{2} & & &  &\frac{\e}{2}\\
% \frac{\e}{2} & 1-\e & \frac{\e}{2} &  &  & \\
%  & \frac{\e}{2} & 1-\e & \frac{\e}{2} &  &  \\
%  &  &  \ddots& \ddots & \ddots & \\
%  &  &  & \frac{\e}{2} & 1-\e & \frac{\e}{2} \\
% \frac{\e}{2} &  &  &  & \frac{\e}{2} & 1-\e 
% \end{pmatrix}\right\}_{\e>0}}$$

In the $\e\to 0$ limit,  instead of considering eigenfunctions, we use eigenprojections, as eigenfunctions can always be multiplied by unit-norm complex scalars, which alters their phase. 
This phase freedom \rev{makes the proofs of convergence and linear response more difficult.}
%can destroy both convergence and linear response. 
Eigenprojections, on the other hand, remain well defined and invariant under such transformations.
Theorem \ref{thm:1} establishes that if the elements of the band vector are incommensurate, the eigenprojections of \( \mathcal{P}_\e \) converge as $\varepsilon\to 0$. 
Furthermore, the limiting projections can be precisely characterized in terms of eigenvectors of specific matrices. 
%To conduct such analysis and provide explicit results, a family of transition matrices \( W_\e = \mathrm{Id} + \e \dot{W} \), \( \e \geq 0 \), being \( L \)-admissible, is fixed. Below, the definition of a \( L \)-admissible family of matrices is provided.}

\begin{theorem}\label{thm:1}
Let $N \in \mathbb{N}$, $1\leq S\leq N$, $L = (L_1, \ldots, L_S)$ be a band-width vector and $\{W_\e = \mathrm{Id}+\e \dot{W}\}_{\e\geq 0}$ an $L$-admissible family of matrices. 
Then for each $$\beta \in \Gamma:=\{(\beta_1,\ldots,\beta_S)\in \R^S: e^{-2\pi i k \beta_{s_1}}\neq e^{-2\pi i k \beta_{s_2}}\ \forall k\in\mathbb Z, \forall s_1,s_2\in\{1,\ldots,S\}\}$$ and $k\in\mathbb Z$:
\begin{enumerate}
\item For sufficiently small $\e$ (depending on $k)$, the $N$ generalised eigenfunctions $F_{k,\e}^{(\ell)}$ from \eqref{eq:F} become $N$ eigenfunctions with distinct eigenvalues $\lambda_{k,\e}^{(\ell)}$, $\ell=1,\ldots,N$.
\item 
For \rev{sufficiently small $\e>0$ (depending on $k$)} and each $\ell=1,\ldots,N$,
recall $$F_{k}^{(\ell)}(j,x) = f_k^{(\ell)}(j)e^{2\pi k i x}$$ and let $\Pi_{k}^{(\ell)},\Pi_{k,\e}^{(\ell)} :L^2(M)\to L^2(M)$ denote $L^2$-orthogonal projections onto the spans of the eigenfunctions $F^{(\ell)}_{k,\e}$ and $F^{(\ell)}_{k}$, respectively.
In the $\e\to 0$ limit there exists a limiting orthonormal basis $\{f_k^{(\ell)}\}_{\ell=1}^N$ of $\mathbb C^N$ such that 
$$\Pi_{k,\e}^{(\ell)} \xrightarrow[]{\e\to 0} \Pi_{k}^{\ell}\ \text{in the }L^2(M)\text{-operator norm}. $$

We may further characterise the $\{f_{k}^{(\ell)}\}_{\ell=1}^N$ as follows: 
\begin{itemize}
\item[$(a)$] If $S=1$ \rev{(one band, all fibres rotate with the same speed)} or $k=0$, $\{f_k^{(\ell)}\}_{\ell=1}^N$ is an orthonormal eigenbasis of the $N\times N$  matrix {$\dot{W}$}.

\item[$(b)$] If $S>1$ \rev{(multiple bands)} and $k\in \mathbb Z\setminus \{0\}$,
  $\{f_k^{(\ell)}\}_{\ell=1}^N$ is an orthonormal eigenbasis of the matrix \begin{equation}
      \label{PkbL}
\hat{P}_{k,\beta,L}:= D_{k,\beta,L} \hat{W}_{L}
  \end{equation}
where 
\begin{align*}
    D_{k,\beta,L} := \begin{pmatrix}
    e^{-2\pi i k \beta_1} \mathrm{Id}_{L_1} &  &  \\
    & \ddots & \\
    &  & e^{-2\pi i k \beta_S} \mathrm{Id}_{L_S},
\end{pmatrix},%\label{eq:db}
\end{align*}  
\begin{align}
    \hat{W}_{L} := \begin{pmatrix}
     \hat{W}_1 &  &  &  &  \\
    & \hat{W}_{2} &  &  &  \\
    &  & \ddots &  &  \\
    &  &  & \hat{W}_{S-1} &  \\
    &  &  &  & \hat{W}_{S}
\end{pmatrix},\label{What}
\end{align}
and  
 $\mathrm{Id}_n$ is the $n \times n$ identity matrix.
Moreover, for each $s\in\{1,\ldots,S\}$, if $\ell$ is a band member of $\beta_s$,   the support of \rev{$F_k^{(\ell)}$ is contained in $\{N_{s-1}+1,\ldots,N_s\}\times S^1$}.
\end{itemize}
\end{enumerate}
\end{theorem}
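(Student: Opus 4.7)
My plan is to use Proposition \ref{prop:1} to reduce the problem to a finite-dimensional one: the eigenfunctions $F_{k,\e}^{(\ell)}$ of $\mathcal{P}_\e$ have the separable form $f_{k,\e}^{(\ell)}(j)e^{2\pi i k x}$, where $f_{k,\e}^{(\ell)}\in\mathbb{C}^N$ is an eigenvector of the matrix $M_\e := D_{k,\alpha}W_\e = D_{k,\alpha} + \e\, D_{k,\alpha}\dot{W}$. Because $e^{2\pi i k x}$ is a fixed unit-norm factor on $L^2(\mathbb{S}^1)$, operator-norm convergence of the $L^2(M)$-projections $\Pi_{k,\e}^{(\ell)}$ onto $\mathrm{span}\, F_{k,\e}^{(\ell)}$ is equivalent to operator-norm convergence in $\mathbb{C}^N$ of the corresponding rank-one projections onto $\mathrm{span}\, f_{k,\e}^{(\ell)}$. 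In the easy case (a) where $S=1$ or $k=0$, $D_{k,\alpha}$ is a scalar multiple of $\mathrm{Id}_N$ (either $e^{-2\pi i k \beta_1}\mathrm{Id}_N$ or $\mathrm{Id}_N$), so $M_\e$ has exactly the same eigenvectors as $W_\e$ and hence as $\dot{W}$; these are $\e$-independent, and by $L$-admissibility item (2) they form an orthonormal basis of $\mathbb{C}^N$, establishing part (1), the convergence claim in part (2), and characterization (a) simultaneously.

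In the main case (b) with $S>1$ and $k\neq 0$, the unperturbed matrix $M_0 = D_{k,\alpha}$ is diagonal with $S$ distinct eigenvalues $\{e^{-2\pi i k\beta_s}\}_{s=1}^S$ -- distinctness being exactly the assumption $\beta\in\Gamma$ -- and the $s$-th eigenvalue has multiplicity $L_s$ with eigenspace $V_s = \mathrm{span}\{e_j : j \in B_s\}$ (the coordinate subspace indexed by the band members). Since $\e\mapsto M_\e$ is a linear (hence analytic) family of matrices, I would invoke Kato's analytic perturbation theory at a degenerate eigenvalue: for $\e$ small, the total spectral projection onto the group of perturbed eigenvalues approaching $e^{-2\pi i k \beta_s}$ is analytic in $\e$ and converges in operator norm to the orthogonal projection onto $V_s$, while the individual eigenvalue branches are controlled to first order by the reduced operator on $V_s$, namely the restriction of $D_{k,\alpha}\dot{W}$ to $V_s$. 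A direct computation -- using that $D_{k,\alpha}$ acts as the scalar $e^{-2\pi i k \beta_s}$ on $V_s$ -- identifies this reduced operator as $e^{-2\pi i k \beta_s}\hat{W}_s$, which has $L_s$ distinct eigenvalues by $L$-admissibility item (3). This non-degeneracy of the first-order splitting promotes the generic Puiseux expansions to ordinary analytic ones and yields operator-norm convergence of each individual one-dimensional spectral projection to the projection onto the corresponding eigenline of $e^{-2\pi i k \beta_s}\hat{W}_s$ inside $V_s$.

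Identification of the limiting basis and its properties then falls out readily: because $e^{-2\pi i k \beta_s}$ is a nonzero scalar, it preserves eigenvectors, so the eigenbasis of $e^{-2\pi i k \beta_s}\hat{W}_s$ coincides with the orthonormal eigenbasis of the real symmetric matrix $\hat{W}_s$, embedded into $\mathbb{C}^N$ by zero-extension outside $B_s$ -- which immediately gives the support-localization statement $\mathrm{supp}\, f_k^{(\ell)} \subseteq \{N_{s-1}+1,\ldots,N_s\}$. Assembling across $s=1,\ldots,S$ produces the claimed orthonormal eigenbasis of the block-diagonal matrix $\hat{P}_{k,\beta,L}=D_{k,\beta,L}\hat{W}_L$, whose blocks are exactly the scalar multiples $e^{-2\pi i k \beta_s}\hat{W}_s$; orthonormality holds within each band (real symmetric matrix with simple spectrum) and across bands (disjoint supports). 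Distinctness of all $N$ perturbed eigenvalues for small $\e$, which is part (1), combines within-band splitting from the first-order perturbation with the between-band gap from $\beta\in\Gamma$. The main technical obstacle throughout is precisely the degeneracy of $M_0$ in case (b); this is where Kato's reduction to the first-order matrix is indispensable, and where the hypothesis that each $\hat{W}_s$ has $L_s$ distinct eigenvalues supplies the exact non-degeneracy condition needed for analytic (rather than merely Puiseux) convergence of the individual eigenprojections.
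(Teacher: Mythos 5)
Your proposal is correct, but it takes a genuinely different route from the paper. You reduce, as the paper does, via Proposition~\ref{prop:1} to the $N\times N$ matrix family $M_\e = D_{k,\alpha}W_\e = D_{k,\alpha} + \e\, D_{k,\alpha}\dot{W}$, and you handle case~(a) identically. Where the two diverge is in case~(b): you invoke Kato's reduction process for a degenerate eigenvalue of an analytic family, identifying the reduced (first-order) operator on the unperturbed eigenspace $V_s$ as $\pi_s D_{k,\alpha}\dot{W}\big|_{V_s} = e^{-2\pi i k\beta_s}\hat{W}_s$, and then using simplicity of the reduced operator's spectrum (Definition~\ref{Ladmiss}(3)) to promote the generic Puiseux branches to honest analytic eigenvalue/eigenprojection branches at $\e=0$. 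This cleanly packages both the splitting (hence part~(1)) and the convergence to the claimed block-localized basis (hence part~(2)(b)). The paper, in contrast, rebuilds this conclusion by hand: Lemma~\ref{lem:zero} localizes accumulation points in $V_s$, Lemma~\ref{lem:46} identifies them as eigenvectors of $\hat P_{k,\beta,L}$ by an explicit computation (essentially computing the reduced operator without naming it), the contradiction argument in Theorem~\ref{thm:gamma} supplies part~(1), and Lemmas~\ref{lem:ld} and~\ref{cor:bas} supply the orthogonality of the limiting basis. Your route is shorter and more modular but relies on a nontrivial piece of Kato's theory (analyticity of individual eigenprojections at $\e=0$ under first-order non-degeneracy); the paper's route is more elementary and self-contained, and its intermediate lemmas pay dividends later, since Lemmas~\ref{lem:zero}, \ref{lem:46}, \ref{lem:ld} and the quantitative control developed alongside them are reused in the response analysis of Theorems~\ref{thm:fhat} and~\ref{thm:hlambda}, which a bare appeal to Kato would not deliver. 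One small point worth making explicit in your write-up: the Riesz spectral projections of $M_\e$ are in general oblique for $\e>0$, whereas the theorem concerns the $L^2$-orthogonal projections $\Pi_{k,\e}^{(\ell)}$; since the eigenspaces are one-dimensional, convergence of the spectral projections implies convergence of the eigenlines and hence of the orthogonal projections, but this deserves a sentence rather than an implicit step.
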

\begin{proof}
    See Sections \ref{sec:3.2}, \ref{sec:3.3} and \ref{sec:4}.
\end{proof}
\rev{In our main case of interest, namely $1<S<N$, Theorem \ref{thm:1} provides explicit formulae for the $\e\to 0$ limiting eigenfunctions and constrains the supports of these limiting eigenfunctions to their associated band fibres.}  
\begin{example}[Small $\e>0$ and $\e\to 0$ behavior of eigenvalues and eigenfunctions]
 \rev{We continue our example where $W_\e$ is given by \eqref{Weps1d}, $\beta=(\pi/20, e/7, 1/\sqrt{2})$, and  $L=(11,7,15)$.}
 
 \rev{\textit{Eigenvalues:} For $\e=\delta=0.1$, Figure \ref{fig:pertmatrixfig}(left) shows the eigenspectrum of $\hat{P}_{1,\beta,L}$ (i.e.\ for $k=1$).
For sufficiently small $\e$, Proposition \ref{prop:1}(2) states that these eigenvalues cluster about the 3 angles in the vector $2\pi\beta$;  the latter is indicated by dotted rays. 
The bound \eqref{gbound} (including a factor $\sin(2\pi\delta)/(2\pi\delta)$) is indicated with dark blue circles and is seen to be very close to optimal. 
Theorem \ref{thm:1} states that for sufficiently small $\e$ we obtain 33 distinct eigenvalues for each $k\in\mathbb{Z}$, with corresponding one-dimensional eigenspaces.}

\rev{\emph{Eigenfunctions:} Proposition \ref{prop:1} states that all eigenfunctions of $\mathcal{P}$ are of the form $F_{k,\e}^{(\ell)}(j, x) = f_{k,\e}^{(\ell)}(j) e^{2\pi i k x}$, $\ell=1,\ldots,33; k\in\mathbb{Z}$.
The magnitudes of $f_{k,\e}^{(\ell)}$ for $\ell=1,2,4$ (those eigenvectors of $D_{k,\beta,L}W_\e$ corresponding to the three largest-magnitude eigenvalues) are shown in Figure \ref{fig:pertmatrixfig}(right).
Importantly, note that because of the particular form of $F_{k,\e}^{(\ell)}(j, x)$ given above, the magnitudes of the full eigenfunctions $F^{(\ell)}_{k,\e}$ on $M$ are constant along the circular fibres and therefore follow precisely the magnitudes given by the eigenvectors $f^{\ell}_{k,\e}$, such as those in Figure \ref{fig:pertmatrixfig}(right);  see Figure \ref{fig:fullefuns}.}

\rev{The upper row of Figure \ref{fig:fullefuns} can be compared with Figure \ref{fig:lorenz}(left).
In both cases, the magnitudes of the complex eigenfunctions are large in the vicinity of a band of the state space rotating at approximately the same speed.
Moreover, the magnitude of the eigenfunction shown in Figure \ref{fig:lorenz}(left) is relatively constant along the approximately rotational trajectories on each wing.
The lower row of Figure \ref{fig:fullefuns} displays the evolution of the argument of the leading eigenfunctions, which passes through a single cycle because we have chosen the lowest order of $k=1$.
Similarly in Figure \ref{fig:lorenz}(right), we see a single cycle corresponding to a single loop around each wing of the Lorenz attractor.
\textit{Thus, the leading complex eigendata provides complete information on the various speeds or periods of rotational motion and the parts of the state space where that motion occurs.}}

\rev{As $\e\to 0$, Theorem \ref{thm:1} states that we obtain a limiting orthonormal basis of eigenfunctions $\{F_k^{(\ell)}\}_{\ell=1,\ldots,33; k\in\mathbb{Z}}$, and that the limiting eigenvectors $f_k^{(\ell)}$ have supports restricted to one of the three bands.
This support restriction is already evident in Figure \ref{fig:pertmatrixfig}(right) and Figure \ref{fig:fullefuns} when $\e=0.1$.
}
\end{example}

% \gf{\begin{enumerate}
%     \item Compute TO
%     \item Inspect ``leading'' eigenvalues...tricky to deal with resonances or non-resonances...
%     \item Arguments of evals gives cycle period, large-magnitude values of the eigenvector show localised support of the cycle in phase space
%     \item theorems...
% \end{enumerate}}

\rev{To analyse a general dynamical system, including e.g.\ the Lorenz system in Section \ref{sec:1}, one would:}
% \textbf{\rev{Algorithm 1:}}
\begin{enumerate}
\item \rev{Construct a numerical estimate of the transfer or Koopman operator that is consistent with convolving the true operator with small noise, so that our theoretical findings should apply. There are many such approaches, for example, Ulam's method \cite{Ulam1964Problems,keller1999stability,FroylandJungeKoltai2013}, direct addition of noise or kernel convolutions \cite{DellnitzJunge99, Giannakis2019DataDriven, crimmins2020fourier}, local kernel evaluations \cite{Williams2015KernelKoopman,FGLPS21} etc. 
In the Lorenz example in Figure \ref{fig:lorenz} we used the simple kernel-based Markov chain approach in \cite{FGLPS21}.}
\item \rev{Compute the largest-magnitude eigenvalues of the transfer or Koopman operator estimate, and identify the largest-magnitude complex eigenvalues that correspond to the primary ($k=\pm 1$) complex eigenfunctions for each cycle. These will often correspond to the complex eigenvalues of largest magnitude and their eigenfunctions will by definition have the least oscillatory phases. In the Lorenz example in Figure \ref{fig:lorenz} this was the second-largest-magnitude eigenvalue after the eigenvalue 1.}
\item \rev{The argument of a complex eigenvalue provides an estimate of the corresponding cycle period (Part 2 of Proposition \ref{prop:1}). 
If a single application of the transfer operator corresponds to evolution for $\tau$ units of time and the argument of the primary complex eigenvalue is $2\pi\alpha$, then the period of the cycle is $\tau/\alpha$.}
%In the Lorenz example, the computed eigenvalue $e^{0.013\cdot 2\pi i}$ corresponds to $1.3\%$ of a full cycle per application of the transfer operator. Thus one period spans about 77 iterations or 0.77 time units because the transfer operator evolves forward 0.01 time units in this example.}
\item \rev{The argument of the corresponding complex eigenfunction describes motion around the cycle. In particular, isocontours of the eigenfunction argument correspond to the same phase of the cycle (Part 1 of Proposition \ref{prop:1});  see Figure \ref{fig:lorenz}(right) for the Lorenz example.}
\item \rev{The identified cycle is approximately localised in the support of the corresponding eigenfunction (Part 2(b) of Theorem \ref{thm:1}). In Figure \ref{fig:lorenz}(left), the darker blue indicates a stronger match of cycle period.}
\rev{Our linear response results in the following subsection state that this localisation is robust to small changes in the noise level provided the noise described by $\dot W$ is local in phase space (see Remark \ref{rmk:resp}).}

%\mmc{ (Theorem \ref{thm:2}~2(b) and Theorem \ref{thm:fhat}~(i)). See the text immediately after \textcolor{red}{Theorem \ref{prop:linresponselast}} for more details.}
%, as explained  The linear response of the eigenfunctions is also spatially localised, provided the noise/regularisation is local in phase space. Combined with the band localisation in Theorem \ref{thm:2}~2(b), this implies that the support of the eigenfunction response remains concentrated close to the almost-cyclic region,.

%Something about linear response? We would like to say that the support of the derivatives of the eigenfunctions $F$ are localised around their band in phase space (see also comment in Example 2.11). \gf{Matheus, we discussed this in my office and we had some argument for why the support localisation would be true, as long as $\dot W$ was also reasonably localised about the diagonal...but looking now at Thm 4.5, I've forgotten what we discussed and it is no longer clear to me.  Can you please write some justification here or in Example 2.11 and I will adapt it?}
\end{enumerate}

% \begin{remark}
% \mmc{Theorem~\ref{thm:1} (and likewise the theorem stated later as Theorem~\ref{thm:2}) can be extended to the more general asymptotic regime
% $$W_\e=\mathrm{Id}+\e\,\dot W+\e^2\,\ddot W+\smallO\footnote{given a sequence $a_\e \xrightarrow[]{\e\to 0} 0$ we denote $\smallO(a_\e)$ as a term which satisfies $\smallO(a_\e)/|a_\e| \to 0$ as $\e \to 0$.}(\e^2),$$
% where, for every $\e>0$, the matrix $W_\e$ is symmetric and doubly stochastic, and where $\dot W$ satisfies conditions (2) and (3) of Definition~\ref{Ladmiss} . We have not included the details here, since doing so would substantially lengthen the argument, introduce additional notation, and obscure the main idea of the proof.} 

% \mmc{We also stress that the symmetry assumption is not merely technical: removing it would require, for $\e$ sufficiently small, that the relevant eigenvalues of $D_{k,\alpha}W_\e$ remain simple, together with uniform control (for fixed $k$) of their arguments as $\e\to0$. Such angular behaviour is typically delicate and difficult to track in the small--$\e$ limit.
% }
% \end{remark}

\subsection{Quadratic and linear response}

Proposition \ref{prop:1} tells us that for each Fourier index $k\in\mathbb Z$, 
in the $\e\to 0$ limit the eigenvalues $\lambda_{k,\e}^{(\ell)}$, $\ell=1,\ldots,N$ of $\mathcal{P}_\varepsilon$ each converge to $e^{-2 \pi i k \alpha_\ell}$, while Theorem \ref{thm:1} says that the eigenprojections $\Pi_{k,\e}^{(\ell)}$ associated with the  eigenfunctions $F^{(\ell)}_{k,\varepsilon}$ of $\mathcal{P}_\varepsilon$ converge to $L^2$-orthogonal projections $\Pi_k^{(\ell)}$ onto $\s\{F_k^{(\ell)}\}$ which have explicit eigenfunction characterisations \rev{and support localisations}.
A fundamental question arising from these results is the rate of convergence of $\Pi_{k,\e}^{(\ell)}$ and $\lambda_{k,\e}^{(\ell)}$ as $\e \to 0$.
If all $\alpha_\ell$ are distinct, one may hope that the corresponding one-dimensional eigenprojections  $\Pi^{(\ell)}_{k,\e}$ are regular functions of $\e$ at $\e=0$.
On the other hand, if we have nontrivial bands --  i.e.\ $S<N$ in Definition \ref{def:cluster1} -- then we would expect at best regular dependence of the various $L_s$-dimensional eigenspaces, corresponding to the $L_s$ repeated eigenvalues $e^{-2\pi i k\beta_s}$, at $\e=0$.
Surprisingly, in the nontrivially banded situation, we can in fact demonstrate quadratic response of \textit{all individual eigenvalues} $\e\mapsto \lambda_{k,\e}^{(\ell)}$ at $\e=0$, and linear response of \textit{all individual eigenprojections} $\e\mapsto \Pi^{(\ell)}_{k,\e}$ at $\e=0$.
To further illustrate why this is surprising, we recall that the standard approach to establishing linear response of the eigenprojections is to differentiate the equation  $\mathcal{P}_\e \Pi_{k,\e}^{(\ell)} = \lambda_{k,\e}^{(\ell)} \Pi_{k,\e}^{(\ell)}$  
with respect to $\e$, where we use the notation $\dot{}$ to denote $\frac{\partial}{\partial \e}$. This differentiation yields the identity  
$$\mathcal{P}_\e \dot{\Pi}_{k,\e}^{(\ell)} + \dot{\mathcal{P}}_\e \Pi_{k,\e}^{(\ell)} = \dot{\lambda}_{k,\e}^{(\ell)} \Pi_{k,\e}^{(\ell)} + \lambda_{k,\e}^{(\ell)} \dot{\Pi}_{k,\e}^{(\ell)},$$  
which can be rewritten as  
\begin{align}
    \dot{\Pi}_{k,\e}^{(\ell)} = -\left(\mathcal{P}_\e - \lambda_{k,\e}^{(\ell)}\right)^{-1} \left(\dot{\mathcal{P}}_\e - \dot{\lambda}_{k,\e}^{(\ell)}\right) \Pi_{k,\e}^{(\ell)}.\label{eq:br}
\end{align}
A common strategy is to evaluate equation \eqref{eq:br} at $\e = 0$. However, in our setting, this approach fails because the range of the operator%term  
$$
\lim_{\e\to 0}\left.\left(\dot{\mathcal{P}}_\e - \dot{\lambda}_{k,\e}^{(\ell)} \right) \Pi_{k,\e}^{(\ell)}\right|_{\e=0}$$  
lies outside the domain of $(\mathcal{P}_\e - \lambda_{k,\e}^{(\ell)})^{-1}$. To overcome this difficulty, we use a non-trivial cancellation mechanism specific to our framework.
This allows us to obtain explicit expressions for the response terms for both the eigenvalues and eigenprojections.

Further, we observe numerically (for example as in Figure \ref{fig:respmatrixfig}(right)) that the form of the response of the eigenfunctions $\dot{F}^{(\ell)}_{k}$ is such that the overall shape of the envelope of $F^{(\ell)}_{k,\e}$ is relatively unchanged by modestly increasing $\e$ from 0.
Practically, this observation helps to explain the robustness of dominant complex eigenfunctions to various types of noise, whether they arise from models, data, or numerical techniques.

%Finally, one obtain that
% $$\lim_{\e\to 0 } \dot{F}_{k,\e}^{(\ell)} = -\left(\mathcal P_0 -\lambda_0\right)^{-1} \left(\dot{\mathcal P}_0 -\dot{\lambda}_{k}^{(\ell)} \right)F_{k,0}^{(\ell)} $$

The following theorem refines Theorem \ref{thm:1} by stating the linear response of the eigenprojections, in addition to the linear and quadratic responses of the eigenvalues of $\mathcal{P}_\e$ as $\e \to 0$.

\begin{theorem}\label{thm:2}  
Under the hypotheses and notation of Theorem \ref{thm:1}, there exist complex numbers  \( \hat{\lambda}_{k}^{(1)},\ldots,\hat{\lambda}_{k}^{(N)}, \doublehat{\lambda}_{k}^{(1)},\ldots,\doublehat{\lambda}_{k}^{(N)}  
\) and functions  
$$f_k^{(1)},\ldots, f_k^{(N)}, \hat{f}_k^{(1)},\ldots, \hat{f}_k^{(N)}: \{1,\ldots, N\} \to \mathbb{C},$$ 
such that  
\begin{align}
\lambda_{k,\e}^{(\ell)} &= e^{-2 \pi i k \alpha_\ell} + \e \hat{\lambda}_{k}^{(\ell)} + \e^2 \doublehat{\lambda}_{k}^{(\ell)} + \smallO(\e^2),  \label{evalresponse}
\end{align}
and  
\begin{align}
    \Pi_{k,\e}^{(\ell)}(\cdot) =  \Pi_k^{(\ell)}(\cdot) + \e   \left( \left\langle \cdot, F_k^{(\ell)}\right\rangle \hat{F}_k^{(\ell)} +  \left\langle \cdot, \hat{F}_k^{(\ell)} \right\rangle F_k^{(\ell)} \right) + \smallO(\e),\label{evecresponse}
\end{align}
where $F_k^{(\ell)}(j,x) = f^{(\ell)}(j)e^{2\pi i k x}$ and $\hat{F}_k^{\ell}(j,x):= \hat{f}_k(j)e^{2\pi i k x}$. Also, $\langle F_k^{(\ell)}, \hat{F}_k^{(\ell)}\rangle = 0$, \rev{the term $\smallO(\e^2)\in \mathbb C$ in \eqref{evalresponse} satisfies $\smallO(\e^2)/\e^2\xrightarrow[]{\e\to 0}0$} and the term $\smallO(\e):L^2(M)\to L^2(M)$ in $\eqref{evecresponse}$ is a finite-rank operator such that $\|\smallO(\e)\|/\e \xrightarrow[]{\e\to 0}0.$

% \begin{align}
% \Pi_{k,\e}^{(\ell)} &= \frac{1}{\|f_k^{(\ell)} +\e \hat{f}_k^{(\ell)} +\smallO(\e)\|^2}  \Pi_{k}^{(\ell)} + \e \frac{\|\hat{f}_k^{(\ell)}\|^2}{\|f_k^{(\ell)} +\e \hat{f}_k^{(\ell)} +\small O(\e)\|^2}\widehat{\Pi}_{k}^{\ell} + \smallO(\e),  \label{evecresponse}
% \end{align}
% where $\widehat{\Pi}_{k}^{(\ell)}:L^2(M)\to \mathrm{span}\{\hat{F}_k^{\ell}\}$ is a projection associated with the function $\hat{F}_k^{\ell}(j,x):= \hat{f}_k^{\ell}(j)e^{2\pi i k x}.$} Also, the term $\smallO(\e):L^2(M)\to L^2(M)$ in $\eqref{evecresponse}$ is a finite-rank operator such that $\|\small(\e)\|/\e \xrightarrow[]{\e\to 0}0.$

% \begin{align}
% \Pi_{k,\e}^{(\ell)} &= \Pi_{\left(f_{k}^{(\ell)} + \e \hat{f}_{k}^{(\ell)} +\smallO(\e)\right)e^{2\pi i k x }}.  \label{evecresponse}
% \end{align}

%\begin{align}
%\mathbb P_{F_{k,\e}^{(\ell)}(j,x) &= \left( f_{k}^{(\ell)}(j) + \e \hat{f}_{k}^{(\ell)}(j) + \smallO(\e) \right)e^{2\pi i k x }.  \label{evecresponse}
%\end{align}

Moreover, explicit formulae for these quantities can be found as follows:
\begin{enumerate}  
\item If $S = 1$ \rev{(one band, all fibres rotate at the same speed)} or $k = 0$:
\begin{itemize}
    \item[$(a)$] $\hat{\lambda}_{k}^{(\ell)}$ and $f_k^{(\ell)}$ are, respectively, the eigenvalues and eigenvectors of $e^{2\pi i k \alpha_1} \dot{W}$;
    \item[$(b)$] $\doublehat{\lambda}_k^{(\ell)} =0$ and $\hat{f}_k^{(\ell)} =0;$ and
    \item[$(c)$] the functions $\smallO(\e^2)$ in \eqref{evalresponse} and $\smallO(\e)$ in \eqref{evecresponse} are identically equal to $0$ (defined in Proposition \ref{prop:3.3}).
\end{itemize}
%$\doublehat{\lambda}_k^{(\ell)} =0$ and $\hat{f}_k^{(\ell)}=0$. Moreover equations \eqref{evalresponse} and \eqref{evecresponse} corresponds to  \ref{prop:3.3}.  
    \item If $S > 1$ \rev{(multiple bands)} and $k \neq 0$:
    \begin{itemize}  
        \item[(a)] $\hat{\lambda}_{k}^{(\ell)}$ and $f_k^{(\ell)}$ are, respectively, the eigenvalues and eigenvectors of $\hat{P}_{k,\beta,L}$  (see \eqref{PkbL}). Moreover, $\arg(\hat\lambda^{(\ell)}_k)=\arg(\lambda^{(\ell)}_k)+\pi$ if $\hat\lambda^{(\ell)}_k\neq 0$.
           \item[(b)] $\hat{f}_k^{(\ell)}$ is computed in Theorem \ref{thm:fhat}; and
        \item[(c)] $\doublehat{\lambda}_{k}^{(\ell)}$ is provided in Theorem \ref{thm:hlambda}.  
    \end{itemize}  
\end{enumerate}  
\end{theorem}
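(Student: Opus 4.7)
The plan is to use Proposition \ref{prop:1} to reduce everything to a finite-dimensional, analytic-in-$\e$ matrix eigenvalue problem for
\[
M_{k,\e} := D_{k,\alpha} W_\e = D_{k,\alpha} + \e\, D_{k,\alpha} \dot W,
\]
and then apply degenerate matrix perturbation theory. Because every eigenfunction of $\mathcal P_\e$ has the separable form $F_{k,\e}^{(\ell)}(j,x)=f_{k,\e}^{(\ell)}(j)e^{2\pi i k x}$, each eigenprojection $\Pi_{k,\e}^{(\ell)}$ is rank-one on $L^2(M)$, so operator-norm convergence and linear response will follow directly from perturbative expansions of the vectors $f_{k,\e}^{(\ell)}\in \mathbb C^N$.

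The cases $S=1$ and $k=0$ are essentially trivial: there $D_{k,\alpha}=c\,\mathrm{Id}$ for a unimodular $c$, so $M_{k,\e}=c(\mathrm{Id}+\e\dot W)$. Since $\dot W$ is symmetric with $N$ distinct eigenvalues (by Definition \ref{Ladmiss}), the eigenvalues and eigenvectors are exactly $c(1+\e\mu_\ell)$ and an $\e$-independent orthonormal basis, giving \eqref{evalresponse}--\eqref{evecresponse} with $\doublehat\lambda_k^{(\ell)}=0$, $\hat f_k^{(\ell)}=0$ and vanishing remainders, as required in case (1). For the main case $S>1$, $k\neq 0$, the unperturbed matrix $D_{k,\alpha}$ has $S$ distinct eigenvalues $e^{-2\pi i k \beta_s}$ (by $\beta\in\Gamma$) of multiplicity $L_s$, with eigenspaces $V_s=\s\{e_j : j\in B_s\}$ and orthogonal projections $P_s$. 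The first-order splitting inside $V_s$ is governed by the compression
\[
P_s\,(D_{k,\alpha}\dot W)\,P_s = e^{-2\pi i k\beta_s}\,\hat W_s,
\]
so assembling across bands produces exactly $\hat P_{k,\beta,L}$ of \eqref{PkbL}. By $L$-admissibility, each $\hat W_s$ is symmetric with simple spectrum, so $\hat P_{k,\beta,L}$ has $N$ distinct eigenvalues $\hat\lambda_k^{(\ell)}$ and an orthonormal eigenbasis $\{f_k^{(\ell)}\}$ with $f_k^{(\ell)}$ supported in the band of $\ell$; these are the limiting objects from Theorem \ref{thm:1}. The phase identity $\arg\hat\lambda_k^{(\ell)}=\arg\lambda_k^{(\ell)}+\pi$ follows because the diagonal entries of $\hat W_s$ equal the negatives of its (non-negative) row sums including the out-of-band contributions, making $-\hat W_s$ diagonally dominant positive semidefinite, whence the eigenvalues of $\hat W_s$ are non-positive.

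The crux of the proof is the first-order eigenvector equation together with the cancellation mechanism highlighted before the statement. Substituting $f_{k,\e}^{(\ell)}=f_k^{(\ell)}+\e \hat f_k^{(\ell)}+o(\e)$ and $\lambda_{k,\e}^{(\ell)}=e^{-2\pi i k\alpha_\ell}+\e\hat\lambda_k^{(\ell)}+o(\e)$ into $M_{k,\e}f_{k,\e}^{(\ell)}=\lambda_{k,\e}^{(\ell)}f_{k,\e}^{(\ell)}$ and matching $\e$-coefficients yields
\[
\bigl(D_{k,\alpha}-e^{-2\pi i k \alpha_\ell}\mathrm{Id}\bigr)\,\hat f_k^{(\ell)} = \bigl(\hat\lambda_k^{(\ell)}\mathrm{Id} - D_{k,\alpha}\dot W\bigr)f_k^{(\ell)}.
\]
Here is the \emph{main obstacle}: the left-hand operator annihilates $V_s$ (where $s$ is the band of $\ell$), so solvability demands that the $V_s$-component of the right-hand side vanish. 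This requirement is \emph{exactly} the eigenvalue equation for $\hat P_{k,\beta,L}$ restricted to $V_s$, which holds by the choice of $f_k^{(\ell)}$; this structural cancellation replaces the naive, singular use of $(\mathcal P_\e - \lambda_{k,\e}^{(\ell)})^{-1}$ in \eqref{eq:br}. On each other band $V_{s'}$, the left-hand operator acts as multiplication by $e^{-2\pi i k\beta_{s'}}-e^{-2\pi i k\beta_s}\neq 0$ (using $\beta\in\Gamma$), hence is invertible, giving the inter-band components of $\hat f_k^{(\ell)}$ explicitly; the intra-band component is fixed by the first-order normalisation $\langle \hat f_k^{(\ell)},f_k^{(\ell)}\rangle=0$, which also ensures $\|f_{k,\e}^{(\ell)}\|=1+O(\e^2)$. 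This determines $\hat f_k^{(\ell)}$ as in Theorem \ref{thm:fhat}.

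Finally, the rank-one projection expansion \eqref{evecresponse} follows from $\Pi_{k,\e}^{(\ell)}(\cdot)=\langle\cdot,F_{k,\e}^{(\ell)}\rangle F_{k,\e}^{(\ell)}$ together with $\langle F_k^{(\ell)},\hat F_k^{(\ell)}\rangle=0$. The second-order coefficient $\doublehat\lambda_k^{(\ell)}$ is obtained by matching $\e^2$-terms in the eigenvalue equation and pairing with $f_k^{(\ell)}$, yielding $\doublehat\lambda_k^{(\ell)}=\langle D_{k,\alpha}\dot W \hat f_k^{(\ell)},f_k^{(\ell)}\rangle$, which after substituting the explicit form of $\hat f_k^{(\ell)}$ reduces to the formula of Theorem \ref{thm:hlambda}. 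The $o(\e^2)$ and $o(\e)$ remainders in the $L^2(M)$-operator norm follow from the holomorphy of $\e\mapsto M_{k,\e}$ and the simplicity of the $\hat\lambda_k^{(\ell)}$.
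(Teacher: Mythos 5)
Your approach---reducing to the matrix eigenproblem for $M_{k,\e}=D_{k,\alpha}W_\e$ and carrying out degenerate perturbation theory by formal $\e$-expansion with a Fredholm-type solvability condition---is a genuinely different route from the paper's. The paper instead establishes convergence by subsequence/compactness arguments, proves the auxiliary orthogonality relation $\langle f_\e^{(\ell)},D_\beta\overline{f_\e^{(m)}}\rangle=0$, and then computes limits of the derivatives $\dot f_\e^{(\ell)}$ expanded in a bi-orthogonal basis built from left and right eigenvectors $\mu_\e^{(r)}$. That said, your proposal has a real gap in the determination of $\hat f_k^{(\ell)}$.

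When you project the first-order equation $(D_{k,\alpha}-e^{-2\pi i k\alpha_\ell}\mathrm{Id})\hat f_k^{(\ell)}=(\hat\lambda_k^{(\ell)}\mathrm{Id}-D_{k,\alpha}\dot W)f_k^{(\ell)}$ onto $V_{s'}$ with $s'\neq s_\ell$, you correctly recover the inter-band components. But that equation says \emph{nothing} about the $L_{s_\ell}-1$ directions in $V_{s_\ell}\ominus\s\{f_k^{(\ell)}\}$: the left-hand operator vanishes on $V_{s_\ell}$ and, after the solvability condition is imposed, the $V_{s_\ell}$-projection of the right-hand side is identically zero. The normalisation $\langle\hat f_k^{(\ell)},f_k^{(\ell)}\rangle=0$ fixes only the one-dimensional $f_k^{(\ell)}$-direction; it does not force the remaining intra-band components to vanish. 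They must be determined from the \emph{second-order} equation: projecting it onto $V_{s_\ell}$ and pairing with $f_k^{(r)}$ for $r\in B_{s_\ell}\setminus\{\ell\}$ gives coefficients proportional to $\frac{1}{\hat\lambda_k^{(\ell)}-\hat\lambda_k^{(r)}}\langle D_{k,\alpha}\dot W\,h,f_k^{(r)}\rangle$, where $h$ is the inter-band piece already found. This is precisely the content of the first sum in the paper's Theorem \ref{thm:fhat} and of Proposition \ref{pro:InnerProdfe}(b), where the $1/\e$ from the intra-band eigenvalue gap $\lambda_\e^{(\ell)}-\lambda_\e^{(r)}\sim\e(\hat\lambda^{(\ell)}-\hat\lambda^{(r)})$ is cancelled by a compensating $\e$ from the smallness of the out-of-band components of $\mu_\e^{(r)}$ --- the cancellation mechanism the authors emphasise. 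Without this step your $\hat f_k^{(\ell)}$, and hence \eqref{evecresponse}, is wrong whenever $L_{s_\ell}\geq 2$, which is exactly the non-trivially banded case of interest.

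Two smaller points. First, $L$-admissibility gives $L_s$ distinct eigenvalues for each block $\hat W_s$, hence $L_s$ distinct eigenvalues of $\hat P_{k,\beta,L}$ \emph{within each band}, but eigenvalues from different blocks $e^{-2\pi i k\beta_s}\hat W_s$ may still coincide; so the claim that $\hat P_{k,\beta,L}$ has $N$ distinct eigenvalues overall is not justified. This is not needed for the reduction argument, but the way you invoke simplicity to control remainders leans on it. Second, your formula $\doublehat\lambda_k^{(\ell)}=\langle D_{k,\alpha}\dot W\hat f_k^{(\ell)},f_k^{(\ell)}\rangle$ does agree with Theorem \ref{thm:hlambda}, but only because the missing intra-band piece of $\hat f_k^{(\ell)}$ is annihilated when paired with $f_k^{(\ell)}$ (symmetry of $\hat W_{s_\ell}$ and orthogonality of the eigenbasis); this cancellation is invisible in your write-up and should be recorded if you keep the formal-expansion route.
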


\begin{example}[Response of eigenvalues and eigenfunctions at $\e=0$]
 \rev{We continue our example where $W_\e$ is given by \eqref{Weps1d}, $\beta=(\pi/20, e/7, 1/\sqrt{2})$, and  $L=(11,7,15)$.}

\rev{\emph{Eigenvalue response:} For small $\e$ the eigenvalues have a quadratic response by Theorem \ref{thm:2}, with the linear response given explicitly in Proposition \ref{eigen}.
Figure \ref{fig:respmatrixfig}(left) shows the response of the 33 eigenvalues displayed in Figure \ref{fig:pertmatrixfig}(left).
As stated in Theorem \ref{thm:2}(2)(a) and seen explicitly in Proposition \ref{eigen}, the eigenvalues are altered in magnitude only, with the argument remaining fixed at $2\pi$-integer multiples of the rotation speeds.
Figure \ref{fig:respmatrixfig}(left) demonstrates that the eigenvalues indeed move along the rays they lie on, decreasing their magnitude.
\begin{figure}
    \centering
\includegraphics[width=0.49\linewidth]{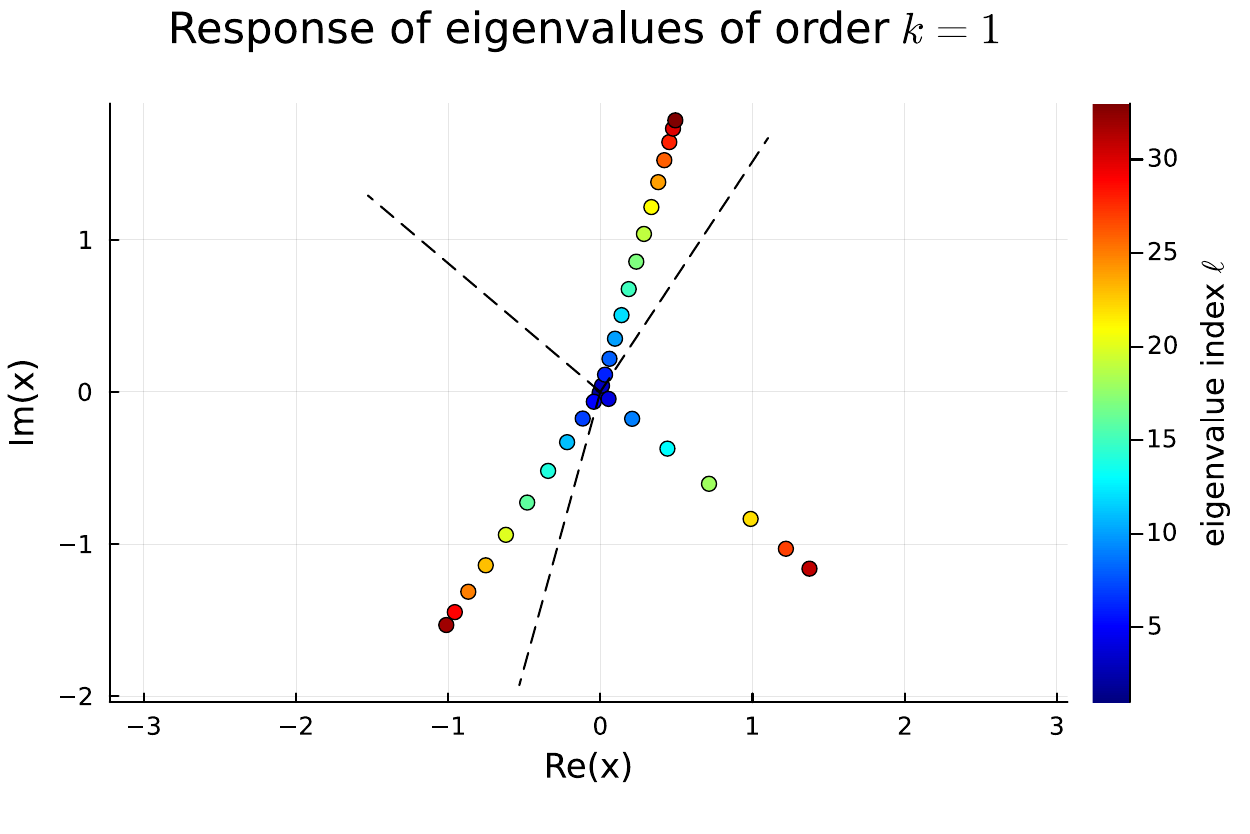}
\includegraphics[width=0.49\linewidth]{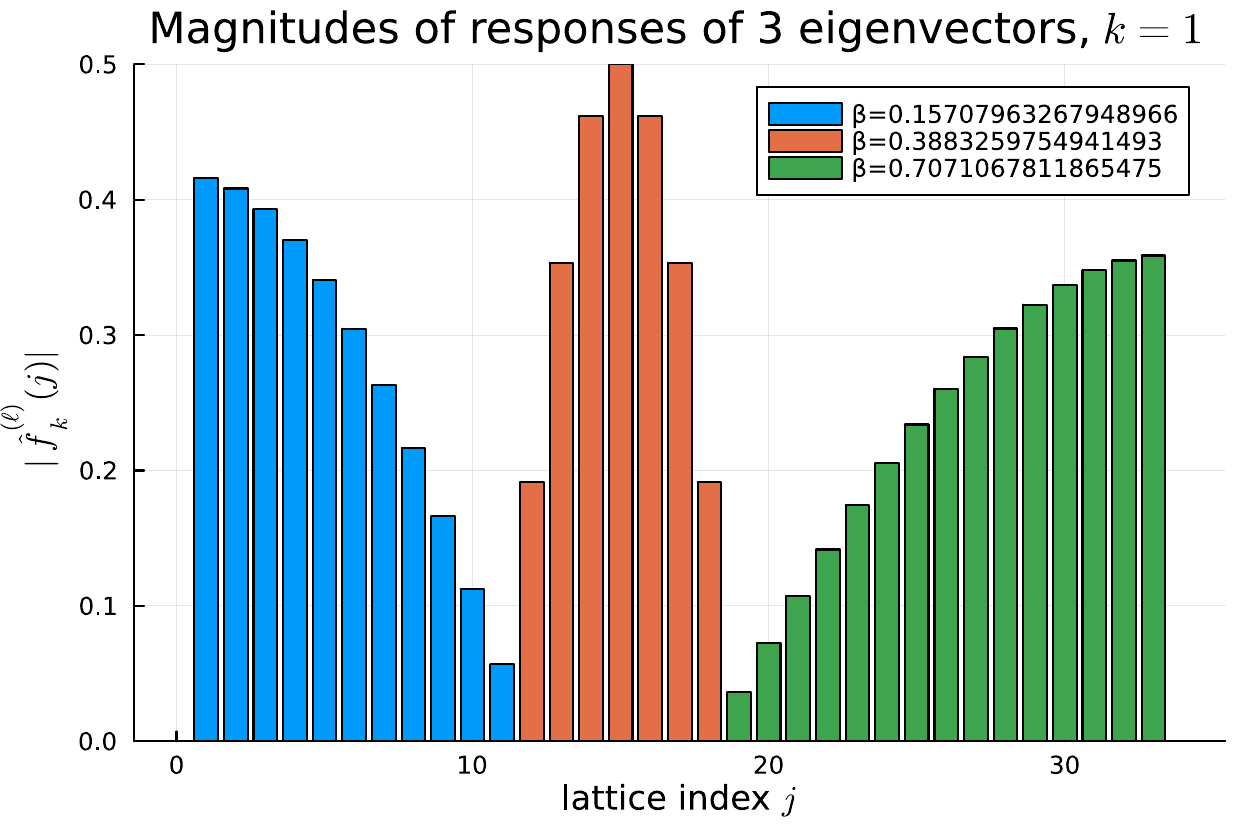}
    \caption{\textit{Left:} The small coloured disks indicate the response of the spectrum $\hat{\lambda}_{1}^{(\ell)}$, $\ell=1,\ldots,33$ of the $33\times 33$ matrix $\hat{P}_{1,\beta,L}$,  associated to the model from Figure 1, where $\beta=(\pi/20, e/7, 1/\sqrt2)$ and $L=(11,7,15)$.  Note that the responses of the 33 eigenvalues are grouped along three rays with $L_i$ responses on a ray with angle $2\pi\beta_i+\pi$ for $i=1,2,3$.  
    The dashed lines and disk colouring are identical to those in Figure \ref{fig:pertmatrixfig}(left).
\textit{Right:}
    Plot of $|\hat{f}_{1}^{(\ell)}(j)|$ vs lattice index $j$ for $\ell=1,2,4$, where the ordering of the eigenvalues $\hat{\lambda}_1^{(\ell)}$ are shown in the left panels of this figure and Figure \ref{fig:pertmatrixfig}. The three response vectors have been normalised to have unit norm and correspond to the three eigenvectors displayed in Figure \ref{fig:pertmatrixfig}(right). 
    %This panel corresponds to  a vertical cross section of the magnitude displayed in the upper row of Figure \ref{fig:TOefuncs}.
    }
    \label{fig:respmatrixfig}
\end{figure}
Those farthest from the unit circle have the greatest response.
In summary, \textit{the argument of the eigenvalues is incredibly robust to the noise level $\e$, which for practical purposes means that eigenvalues of the transfer operator are robust indicators of the periods of almost-cycles.}}

\rev{\emph{Eigenfunction response:}
Turning now to the response of the eigenvectors, in Figure \ref{fig:respmatrixfig}(right) we plot the magnitudes of responses of the three eigenvectors shown in Figure \ref{fig:pertmatrixfig}(right).
In Figure \ref{fig:respmatrixfig}(right) we can see that the responses are separately restricted to each of the three bands, and that the envelopes of the magnitudes are almost identical to those in Figure \ref{fig:pertmatrixfig}(right).
Remark \ref{rmk:resp} provides a theoretical explanation for the observed support restriction. 
%\mmc{Such type of behaviour is a consequence of Theorem \ref{thm:2}~2(b) (via Theorem \ref{fhat}~(i)). Since the eigenvetors are localized in the bands of same speed, and $\dot W$ is a tridiagonal matrix, the support of the linear response is the same of the corresponding eigenvector plus at most two entries} 
%\mmc{This bandwise separation follows from Theorem \ref{prop:linresponselast}:
%\ref{thm:2}~2(b) (via Theorem \ref{thm:fhat}~(i))
%the eigenvectors are localised within the blocks associated with a fixed speed. The eigenvector response is written in terms of applying the matrix $\dot W$ to the localised eigenvectors. Since $\dot W$ is tridiagonal, it only mixes each entry with its immediate neighbours, so it cannot produce a response outside the original band except possibly at the two boundary indices. In this way, if an eigenvector is supported inside a given band, the support of its linear response stays within that band, up to a one-index enlargement on each side.}
%\gf{Need to provide (and refer to part of Thm) for theoretical result why this is true for localised noise $\dot W$}
Therefore, as in the case of the eigenvalues, these eigenvectors, and therefore \textit{the eigenfunctions of the transfer operator $\mathcal{P}_\e$, are extremely robust to the noise level $\e$, making them reliable indicators of the supports of distinct almost-cyclic sets in the state space of the dynamics.}}
\end{example}
% \begin{remark}
% \label{respremark}
% \gf{I think the result is nice and I added it to the Theorem above, so perhaps the content of this remark can go into the proof of Thm \ref{thm:2}(2)(a)?}
% Assume the hypothesis and notation of Theorem \ref{thm:2}. Given $\ell \in\{1,\ldots,N\}$, $\lambda^{(\ell)}_k$ is an eigenvalue of the matrix $\hat{P}_{k,\beta,L} = D_{k,\beta} \hat{W}_{L}$. Since $D_{k,\beta} \hat{W}_{L}$ is a block matrix, there exists $s\in\{1,\ldots, S\}$ such that $\lambda_k^{(\ell)}$ is also an eigenvalue of the $L_s \times L_s$ matrix $e^{-2\pi i k \beta_s} \hat{W}_{s}$. In this way, $\lambda^{(\ell)}_k = e^{-2\pi i k \beta_s} \rho_s$ where $\rho_s$ is an eigenvalue of the real symmetric matrix $\hat{W}_s$. From Definition \ref{Ladmiss}, all the diagonal entries of $\hat{W}_s$ are negative real numbers and the sum of each row is non-positive. As a consequence of the Gershgorin circle theorem \cite[Theorem 6.11]{MAnalysis}, we obtain that all eigenvalues of $\hat{W}_s$ are non-positive, i.e. $\rho_s\geq 0$. Hence, the complex argument of $\lambda^{(\ell)}_k = e^{-2\pi i k \beta_s} \rho_s$ is equal to $2\pi k \beta_s + \pi\ (\text{mod }2\pi)=2\pi k \alpha_\ell + \pi\ (\text{mod }2\pi).$
% \end{remark}

\subsection{Linear response with respect to the band vectors}

In this section, we describe the linear response with respect to the rotation speed vector $\alpha \in \R^N$. To emphasise the $\alpha$ dependence on the operator $\mathcal P_\e$ we write it as $\mathcal P_{\e,\alpha}$.

\subsubsection{The case of $\varepsilon=0$.}
For $\varepsilon=0$, linear response with respect to the band vectors \rev{holds when $S=N$ (all circular fibres rotate with distinct speeds) or $k=0$, and fails when  $S\neq N$ and $k\neq 0$.}

\begin{proposition}\label{prop:linear-response-projection}
\rev{Fix $k\in\Z$ and $\ell\in\{1,\ldots,N\}$. For $\e>0$ let $\Pi_{\e,\alpha,k}^{(\ell)}$ denote the $L^2$-orthogonal projection onto $\mathrm{span}\{F_{k,\e,\alpha}\}$, where $F_{k,\e,\alpha}$ is defined in \eqref{pro:sard}. Whenever the limit exists, set $
\Pi_{k,\alpha}^{(\ell)}:=\lim_{\e\to 0}\Pi_{\e,\alpha,k}^{(\ell)}.$ Then:
\begin{enumerate}
\item[(i)] If $\alpha\in\mathbb{R}^N$ is $N$-banded (all circular fibres rotate at distinct speeds), the map $\alpha\mapsto \Pi_{k,\alpha}^{(\ell)}$ is locally constant in a neighbourhood of $\alpha$. In particular,
$
\frac{\partial}{\partial\alpha}\Pi_{k,\alpha}^{(\ell)}=0
$
at every $N$-banded vector $\alpha$. The same conclusion holds when $k=0$ and $\alpha$ is $S$-banded for any $1\le S\le N$.
\item[(ii)] If $\alpha\in\mathbb{R}^N$ is $S$-banded for some $1\leq S<N$ and $k\neq 0$, then the map $\upsilon\mapsto \Pi_{k,\upsilon}^{(\ell)}$ is discontinuous in a neighbourhood of $\alpha$. Consequently, linear response fails at $\alpha$.
\end{enumerate}}
\end{proposition}

\begin{proof}
\,
\begin{enumerate}
\item[(i)]\rev{Assume that $\alpha$ is $N$-banded. By Theorem \ref{thm:1} (2), $\Pi_{k,\alpha}^{(\ell)}$ exists and is the $L^2$-orthogonal projection onto $\mathrm{span}\{F_{k,\alpha}^{(\ell)}\}$, where $
F_{k,\alpha}^{(\ell)}(j,x)=\mathbf{1}_{\{\ell\}}(j)e^{2\pi i k x}.$ In particular, for $N$-banded parameters the range of $\Pi_{k,\alpha}^{(\ell)}$ is generated by a function that does not depend on $\alpha$. Hence $\Pi_{k,\alpha}^{(\ell)}$ itself is constant on the set of $N$-banded vectors. Since the set of $N$-banded vectors is open in $\mathbb{R}^N$, the map $\alpha\mapsto \Pi_{k,\alpha}^{(\ell)}$ is locally constant at $\alpha$, and therefore its derivative with respect to $\alpha$ vanishes there. The case $k=0$ is analogous.}

\item[(ii)] \rev{Let $k\neq 0$ and let $\alpha$ be $S$-banded with $1\leq S<N$. Since the set of $N$-banded vectors is dense in $\mathbb{R}^N$, there exists a sequence $(\upsilon_n)_{n\geq 1}$ of $N$-banded vectors such that $\upsilon_n\to\alpha$. By part (i), each $\Pi_{k,\upsilon_n}^{(\ell)}$ equals the $L^2$-orthogonal projection onto $\mathrm{span}\{\mathbf{1}_{\{\ell\}}(j)e^{2\pi i k x}\}$. 
%\mmc{On the other hand, when $\alpha$ is $S$-banded with band vector $\beta$ and band-width vector $L$ and $k\neq0$, Theorem~\ref{thm:1}\,(2)(b) identifies the limiting projection $\Pi_{k,\alpha}^{(\ell)}$ as the $L^2$-orthogonal projection onto $\mathrm{span}\{F_k^{(\ell)}\}$, where
%$F_k^{(\ell)}(j,x)=f_k^{(\ell)}(j)e^{2\pi ikx}$
%and the coefficient vector $f_k^{(\ell)}$ is an eigenvector of $\hat P_{k,\beta,L}=D_{k,\beta,L}\hat W_L$. Moreover, if $\ell\in B_s$ then $\mathrm{supp}(F_k^{(\ell)})=\{N_{s-1}+1,\ldots,N_s\}\times S^1$. Hence, at an $S$-banded parameter the limiting one-dimensional eigendirection, does not coincide with the coordinate mode $\mathbf{1}_{\{\ell\}}(j)e^{2\pi ikx}$ arising in the $N$-banded case.} \textcolor{red}{Fixed}
On the other hand, Theorem \ref{thm:1} 2(b) implies that at the $S$-banded parameter $\alpha$ the limiting projection $\Pi_{k,\alpha}^{(\ell)}$ is not given by this same projection.
%\gf{Matheus, can you rephrase or clarify? I don't see what you are saying regarding Thm 2.8 part 2b}. 
Therefore $\Pi_{k,\upsilon_n}^{(\ell)}\not\to \Pi_{k,\alpha}^{(\ell)}$, which proves that $\upsilon\mapsto \Pi_{k,\upsilon}^{(\ell)}$ is discontinuous at $\alpha$. Discontinuity implies failure of differentiability, so linear response fails at $\alpha$.}
\end{enumerate}
\end{proof}

\subsubsection{The case of $\e>0$.}
For $\e>0$ we have a positive result of standard linear response.
\begin{proposition}\label{prop:LRA}
 Fix $k\in \mathbb Z$, $\e>0$, and $\alpha\in \R^N$. 
   Assume that 
   $\mathcal{P}_{\e,\alpha}$ has exactly $N$ distinct eigenvalues $\lambda_{k,\e,\alpha}^{(\ell)}$, $\ell =1,\ldots,N$ associated with eigenvectors of the form $F_{k,\e,\alpha}(x,j) = f_{k,\e,\alpha}(j)e^{2\pi i k x}$.
\rev{Each of these eigenvalues enjoys linear response with respect to $\alpha$. More precisely,} there exists an open neighbourhood $A_\alpha$ of $\alpha$ in $\R^N$ such that the maps
\begin{align}
     U_{\alpha} \ni \alpha'  \mapsto \lambda_{k,\e,\alpha'}^{(\ell)} \in\mathbb C\ \text{and }   U_{\alpha} \ni \alpha' \mapsto F_{k,\e,\alpha'}(\cdot,\cdot) = f_{k,\e,\alpha'}(\cdot) e^{2\pi i k \cdot} \in\mathcal C^0(M,\mathbb C),\label{pro:sard}
\end{align} 
are differentiable.
Moreover,  one has
\begin{align}\frac{\partial}{\partial \alpha}f_{k,\e,\alpha} = \left(D_{k,\alpha} W_\e - \lambda_{k,\e,\alpha} \right)^{-1} \left(\frac{\partial }{\partial \alpha}D_{k,\alpha} W_\e - \frac{\partial}{\partial \alpha} \lambda_{k,\e,\alpha}\right) f_{k,\e,\alpha}. \label{eq:scarpeli}
\end{align}
\end{proposition}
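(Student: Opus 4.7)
My plan is to reduce the problem to a finite-dimensional matrix perturbation problem using Proposition \ref{prop:1}, invoke classical perturbation theory for simple eigenvalues, and then derive the explicit formula by implicit differentiation of the eigenvalue equation.

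\textbf{Step 1: Reduction to a finite-dimensional problem.} By Proposition \ref{prop:1}, for the fixed Fourier index $k$, the eigenvalues and eigenfunctions of $\mathcal{P}_{\e,\alpha}$ of the separable form $f(j)e^{2\pi i k x}$ are in one-to-one correspondence with the eigenvalues and eigenvectors of the $N\times N$ matrix $M(\alpha'):=D_{k,\alpha'}W_\e$. The map $\alpha' \mapsto M(\alpha')$ is entire in each coordinate of $\alpha'$, since $D_{k,\alpha'}$ depends analytically on $\alpha'$ through complex exponentials and $W_\e$ is fixed.

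\textbf{Step 2: Smooth dependence of simple eigenvalues and eigenvectors.} The hypothesis that $\mathcal{P}_{\e,\alpha}$ has exactly $N$ distinct eigenvalues means every eigenvalue of $M(\alpha)$ is simple. By the classical Kato perturbation theory for analytic families of matrices, in a small open neighbourhood $U_\alpha$ of $\alpha$ the eigenvalues remain simple and both the eigenvalues $\lambda_{k,\e,\alpha'}^{(\ell)}$ and the associated spectral projections depend analytically (in particular differentiably) on $\alpha'\in U_\alpha$. A smooth selection of normalised eigenvectors $f_{k,\e,\alpha'}^{(\ell)}$ then follows, and the continuous map $\alpha'\mapsto F_{k,\e,\alpha'}=f_{k,\e,\alpha'}(\cdot)e^{2\pi i k\,\cdot}\in\mathcal{C}^0(M,\mathbb{C})$ inherits this differentiability because the embedding $\mathbb{C}^N\to\mathcal{C}^0(M,\mathbb{C})$ is linear.

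\textbf{Step 3: The response formula.} Differentiating the identity $M(\alpha')f_{k,\e,\alpha'}=\lambda_{k,\e,\alpha'}f_{k,\e,\alpha'}$ with respect to $\alpha'$ and rearranging yields
\begin{equation*}
\left(M(\alpha)-\lambda_{k,\e,\alpha}\right)\frac{\partial f_{k,\e,\alpha}}{\partial \alpha}= -\left(\frac{\partial M(\alpha)}{\partial \alpha}-\frac{\partial \lambda_{k,\e,\alpha}}{\partial \alpha}\right)f_{k,\e,\alpha},
\end{equation*}
which is \eqref{eq:scarpeli} (up to a sign absorbed into the inverse). The derivative of the eigenvalue is obtained in the standard way by pairing the equation with the corresponding left eigenvector of $M(\alpha)$, producing the analogue of the Hellmann--Feynman formula.

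\textbf{Main obstacle.} The operator $M(\alpha)-\lambda_{k,\e,\alpha}$ is not invertible on all of $\mathbb{C}^N$, since $f_{k,\e,\alpha}$ lies in its kernel, so the inverse in \eqref{eq:scarpeli} must be interpreted as the (unique) inverse on the range complement --- i.e.\ the reduced resolvent on the spectral complement of $\lambda_{k,\e,\alpha}$, which exists because this eigenvalue is simple. One must verify that the right-hand side indeed lies in this range, which amounts to checking that its pairing with the left eigenvector vanishes; this is precisely the condition that fixes $\partial_\alpha\lambda_{k,\e,\alpha}$. Once this compatibility is established, $\partial_\alpha f_{k,\e,\alpha}$ is determined up to a multiple of $f_{k,\e,\alpha}$, and that remaining scalar is pinned down by differentiating the chosen normalisation convention.
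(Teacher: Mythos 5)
Your proposal is correct and follows essentially the same route as the paper: reduce to the finite-dimensional eigenvalue problem for the matrix $D_{k,\alpha'}W_\e$ via the separable structure (the paper cites Lemma \ref{lem:eform}, you cite Proposition \ref{prop:1}, but these are the same ingredient), invoke analytic perturbation theory for simple eigenvalues \cite[II.2.2]{Kato}, and then obtain \eqref{eq:scarpeli} by implicit differentiation of the eigenvalue identity. Your discussion of the ``main obstacle'' is actually a slightly more careful articulation of the point the paper compresses into the single remark that $\langle f_{k,\e,\alpha}, \partial_{\alpha^\ell} f_{k,\e,\alpha}\rangle = 0$: the inverse in \eqref{eq:scarpeli} is to be read as the inverse of $D_{k,\alpha}W_\e - \lambda$ restricted to the complement of its one-dimensional kernel, the compatibility condition fixes $\partial_\alpha\lambda$, and the normalisation removes the residual kernel ambiguity — all of which matches the paper's intent.
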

\begin{proof}
See Section \ref{sec:alpharesp}.    
\end{proof}

We remark that by Theorem \ref{thm:1}(1), for sufficiently small $\varepsilon$ one automatically obtains the assumption in Proposition \ref{prop:LRA} that 
$\mathcal{P}_{\e,\alpha}$ has exactly $N$ eigenvalues.

\section{Proofs of Proposition \ref{prop:1}, Theorem \ref{thm:1}\texorpdfstring{$(a)$}{(a)}, Theorem \ref{thm:1}(2)\texorpdfstring{$(a)$}{(a)}, and Theorem \ref{thm:2}(1)} 
\label{sec:3}

%Let us fix a vector $\alpha \in \mathbb{C}^N$ and define the operator $\mathcal{P}_{\e, \delta} = \mathcal{P}_{\e, \delta, \alpha}$ as introduced in Section \ref{sec:2}. 

% In this section, we analyse the structure of the point spectrum of $\mathcal{P}_\e$ for a fixed $\alpha \in \mathbb{R}^N$. 
% To accomplish this, we utilise the Fourier expansion of a function $F \in L^2(M)$, expressed as  
% $$F(j, x) = \sum_{k \in \mathbb{Z}} f_k(j) e^{2 \pi i k x}, \quad j \in \{1, \ldots, N\}, \, x \in \mathbb{S}^1.$$  
% We show that, in this Fourier basis, the operator $\mathcal{P}_{\e}$ decouples across the Fourier modes $k$. Consequently, for each $k$, we obtain $N$ eigenvalue-eigenfunction pairs. Additionally, we establish that for a fixed $\delta > 0$, as $\e \to 0$, the $N$ eigenvalues associated with the Fourier mode $k$ converge to the set $ \left\{e^{-2 \pi i k \alpha_1}, \ldots, e^{-2 \pi i k \alpha_N}\right\}$.

\subsection{Proof of Proposition \ref{prop:1}}
\label{sec:3.1}
We start by proving results that are valid for any  $\alpha\in \R^N$. In  future sections, we restrict our analysis to the vector $\alpha$ being an $S$-band. 

\subsubsection{Proof of Proposition \ref{prop:1}(1)}
Lemma \ref{lem:eform} asserts that eigenfunctions of %$\mathcal{P}_{\e,\delta}$ and
$\mathcal P_{\e}$ corresponding to a fixed Fourier index $k$ take the form of a
%possesses the characteristics of a \textit{identical} 
harmonic function defined along {each} circular fibre, modulated by a function that is solely dependent on the fibres.
Recall that $D_{k,\alpha}$ is defined in \eqref{Dkalpha}.

\begin{lemma}
\label{lem:eform}
A function $F\in L^2(M)$ is a generalised eigenfunction of  $\mathcal P_{\e}$ with eigenvalue $\lambda$ if and only if ${F(j,x) = f_k(j) e^{2 \pi i k x}}$ for some $k\in\mathbb Z$, where (identifying the function $f_k$ with the vector $(f_k(1),\ldots, f_k(N)) \in \mathbb C^N$) $f_k$ is a generalised eigenvector of $D_{k,\alpha} W_\e$ with eigenvalue $\lambda$.
%The same result is also valid when considering the operator $\mathcal P_\e$ by changing $\sin(2\pi k \delta)/(2\pi k \delta)$ by $1.$
\end{lemma}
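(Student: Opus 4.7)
The plan is to reduce the spectral analysis of $\mathcal{P}_\e$ on the infinite-dimensional space $L^2(M)$ to an eigenvalue problem for $N \times N$ matrices by Fourier-decomposing $F$ in the circular variable. Any $F \in L^2(M)$ admits the orthogonal expansion $F(j,x) = \sum_{k \in \Z} f_k(j) e^{2\pi i k x}$ with coefficients $f_k(j) := \int_{\mathbb{S}^1} F(j,x) e^{-2\pi i k x}\,dx \in \C$, so the main step is to show that $\mathcal{P}_\e$ preserves each Fourier subspace $V_k := \{f(j) e^{2\pi i k x} : f \in \C^N\}$ and that its restriction to $V_k$ is represented by the $N\times N$ matrix $D_{k,\alpha} W_\e$ on $\C^N$.

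The central calculation is to plug a pure mode $F(j,x) = f(j) e^{2\pi i k x}$ into the skew-product formula \eqref{skewproduct}. Because $\delta = 0$, the fibre operator satisfies $\mathcal{P}_{j,0}(e^{2\pi i k x}) = e^{-2\pi i k \alpha_j} e^{2\pi i k x}$, so the inner expression collapses to $f(j)\,\mathcal{P}_{j,0}h_j(x) = (D_{k,\alpha} f)(j)\,e^{2\pi i k x}$. The base operator $\mathcal{P}_{\mathrm{base},\e}$ acts only in the $j$-variable and, by symmetry and double stochasticity of $W_\e$, coincides with matrix multiplication by $W_\e$ on $\C^N$. Substituting yields $\mathcal{P}_\e F(j,x) = e^{2\pi i k x} (W_\e D_{k,\alpha} f)(j)$, which simultaneously proves invariance of $V_k$ and exhibits the restriction of $\mathcal{P}_\e$ to $V_k$ as the matrix $W_\e D_{k,\alpha}$; conjugation by the diagonal unitary $D_{k,\alpha}$ identifies this with $D_{k,\alpha} W_\e$, so the two matrices share eigenvalues and Jordan block structure.

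With the block-diagonal structure in place, the statement $(\mathcal{P}_\e - \lambda)^n F = 0$ decouples mode-by-mode into $(D_{k,\alpha} W_\e - \lambda)^n f_k = 0$, and both directions of the claimed equivalence follow. For the converse direction, given a generalised eigenvector $f_k$ of $D_{k,\alpha} W_\e$, the function $f_k(j) e^{2\pi i k x}$ is directly verified from the same calculation to be a generalised eigenfunction of $\mathcal{P}_\e$. For the forward direction, any generalised eigenfunction of $\mathcal{P}_\e$ with eigenvalue $\lambda$ decomposes via its Fourier series into summands lying in distinct invariant subspaces $V_k$, each of which must therefore individually satisfy $(\mathcal{P}_\e - \lambda)^n F_k = 0$ and hence be of the stated pure form. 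The main point requiring care will be tracking the similarity between $W_\e D_{k,\alpha}$, which arises naturally from the direct calculation, and $D_{k,\alpha} W_\e$, which is quoted in the lemma; this is a purely algebraic unitary change of representative that leaves the spectrum and the separable structure $f_k(j) e^{2\pi i k x}$ intact.
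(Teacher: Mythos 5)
Your overall strategy — Fourier-expand, show each subspace $V_k = \{f(j)e^{2\pi i k x}: f\in\C^N\}$ is invariant under $\mathcal{P}_\e$, identify the restriction to $V_k$ with an $N\times N$ matrix, and decouple the generalised-eigenfunction condition mode by mode — is exactly the route the paper takes, and the invariance and decoupling logic is sound. The problem lies at the key identification step. Working literally from \eqref{skewproduct} at $\delta=0$, you obtain $\mathcal{P}_\e(f\,e^{2\pi i k\cdot})(j,x)= e^{2\pi i k x}\,(W_\e D_{k,\alpha}f)(j)$: the phase $e^{-2\pi i k \alpha_{j-\gamma}}$ is tied to the departure fibre and therefore multiplies $f$ \emph{before} the base walk $W_\e$ acts. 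The paper's proof instead asserts \eqref{eigeneqn}, $\mathcal{P}_\e(f\,e^{2\pi i k\cdot})(j,x)= e^{-2\pi i k\alpha_j}(W_\e f)_j\,e^{2\pi i k x}$, i.e.\ $D_{k,\alpha}W_\e$, with the phase keyed to the arrival fibre $j$ and applied \emph{after} $W_\e$.

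Your attempt to bridge the two via the conjugation $D_{k,\alpha}(W_\e D_{k,\alpha})D_{k,\alpha}^{-1}=D_{k,\alpha}W_\e$ does not close the gap. Similarity preserves the spectrum and Jordan structure, but it \emph{transforms} the generalised eigenvectors: if $f$ is a generalised eigenvector of $W_\e D_{k,\alpha}$, then it is $D_{k,\alpha}f$, not $f$, that is a generalised eigenvector of $D_{k,\alpha}W_\e$. The lemma is a statement about the specific coefficient vector $f_k$ appearing in $F(j,x)=f_k(j)e^{2\pi i k x}$, and every subsequent result in the paper works with $P_{\e,\beta}=D_\beta W_\e$; so the claim that the unitary change of representative "leaves the separable structure $f_k(j)e^{2\pi i k x}$ intact" is false — it rotates the coefficient vector. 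Under your computation the correct conclusion would be that $f_k$ is a generalised eigenvector of $W_\e D_{k,\alpha}$ (equivalently, that $D_{k,\alpha}f_k$ is one of $D_{k,\alpha}W_\e$), which is not what the lemma asserts. To match the paper you should either carry out the computation under the arrival-fibre convention implicit in \eqref{eigeneqn}, so that $D_{k,\alpha}W_\e$ appears directly, or explicitly renormalise your coefficient vector by $D_{k,\alpha}$; the bare appeal to similarity leaves the stated equivalence unproved.
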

%\gf{We should emphasise that the matrix appearing above, what is later called $F^{\e,\delta}_{k,\beta}$ is the action of the PF operator on the $k^{th}$ Fourier coefficients (in space $x$) over all fibres.  For this reason I am tempted to call the matrix $\hat{P}_{k,\e}$, where the hat denotes a Fourier (frequency) object.}
\begin{proof}
Given $f\in L^2(M)$, by employing Fourier series, we can express 
\begin{equation}
    \label{fourierform}
F(j,x) = \sum_{k\in\mathbb Z} f_k(j)e^{2 \pi i k x},\ \text{where }f_k: \{1,\ldots,N\}\to \mathbb C.
\end{equation}
By \eqref{Pbase} and \eqref{skewproduct} with $\delta=0$ for every $(j,x)\in M$ 
\begin{align}
    &\mathcal P_{\e} F(j,x) =\sum_{k\in\Z} 
\mathcal P_\e \left(f_k(\cdot) e^{2\pi i k \cdot} \right)(j,x)\nonumber\\
\label{eigeneqn}    &=\sum_{k\in\mathbb Z} e^{-2 \pi i k \alpha_j} \left(W_\e f_k\right)_j  e^{2 \pi i k x}.
\end{align}
Suppose that $\mathcal{P}_\varepsilon F=\lambda F$.
By equating individual Fourier mode coefficients in \eqref{eigeneqn} we obtain $D_{k,\alpha}W_\varepsilon f_k=\lambda f_k$.
Similarly, if $D_{k,\alpha}W_\varepsilon f_k=\lambda f_k$ holds, the ansatz \eqref{fourierform} provides an eigenfunction $F$ with eigenvalue $\lambda$.
The case where $F$ is a generalised eigenfunction of $F$ follows similarly.
%The proof for the operator $\mathcal P_\e$ is identical.
\end{proof}

\subsubsection{Proof of Proposition \ref{prop:1}(2)}
\begin{lemma} \label{prop:xama}

Given $k\in \mathbb Z$, let $\{\lambda_{k,\e}^{(1)}, \ldots, \lambda_{k,\e}^{(N)}\}:=\sigma\left(D_{k,\alpha}  W_\e\right)$. Then, after properly relabelling the elements of $\sigma\left(D_{k,\alpha}  W_\e\right)$, it follows that for every $\e>0$ sufficiently small
    %\gf{I added `sin' to the numerator in the left abs value below because before the exponential was multiplied by 1, which didn't seem to make sense.}
    $$\left|\lambda_{k,\e}^{(\ell)} -  e^{-2 \pi i k \alpha_\ell}\right| \leq 2 \max_{1\leq r \leq N}(1- W_{\e,r, r})\ \text{for every}\ \ell\in\{1,\ldots, N\},$$
    and
     \rev{$$\left|\mathrm{arg}\left(\lambda_{k,\e}^{(\ell)}\right) + 2\pi i k \alpha_\ell \ (\mathrm{mod}\, 2\pi)\right| \leq 2\pi  \max_{1\leq r \leq N}(1- W_{\e,r, r}) \ \text{for every}\ \ell\in\{1,\ldots, N\}.$$}
\end{lemma}
\begin{proof}

\rev{For each $\ell\in\{1,\ldots,N\}$, consider the Gershgorin disks
$$
\Delta_{\ell}
:=\left\{z\in\mathbb C:\left|z-(D_{k,\alpha}W_\e)_{\ell\ell}\right|
\leq\sum_{\ell'\neq\ell}\left|(D_{k,\alpha}W_\e)_{\ell\ell'}\right|\right\}.
$$
Since $D_{k,\alpha}$ is diagonal, for all $\ell,\ell'$,
$$
(D_{k,\alpha}W_\e)_{\ell\ell'}=(D_{k,\alpha})_{\ell\ell}\,W_{\e,\ell,\ell'}
=e^{-2\pi i k\alpha_\ell}W_{\e,\ell,\ell'}.
$$
Hence $
\sum_{\ell'\neq\ell}\left|(D_{k,\alpha}W_\e)_{\ell\ell'}\right|
=\sum_{\ell'\neq\ell}W_{\e,\ell,\ell'}=1-W_{\e,\ell,\ell}$.
Therefore
\begin{align*}
\Delta_{\ell}
&=\left\{z\in\mathbb C:\left|z-e^{-2\pi i k\alpha_\ell}W_{\e,\ell,\ell}\right|
\leq1-W_{\e,\ell,\ell}\right\}\\
%&\subset\left\{z\in\mathbb C:\left|z-e^{-2\pi i k\alpha_\ell}\right|
%\leq\left|z-e^{-2\pi i k\alpha_\ell}W_{\e,\ell,\ell}\right|
%+\left|e^{-2\pi i k\alpha_\ell}W_{\e,\ell,\ell}-e^{-2\pi i k\alpha_\ell}\right|\right\}\\
%&\subset\left\{z\in\mathbb C:\left|z-e^{-2\pi i k\alpha_\ell}\right|
%\leq(1-W_{\e,\ell,\ell})+(1-W_{\e,\ell,\ell})\right\}\\
&\subset\left\{z\in\mathbb C:\left|z-e^{-2\pi i k\alpha_\ell}\right|
\leq2\max_{1\leq r\leq N}(1-W_{\e,r,r})\right\}
=:\widetilde{\Delta}_\ell.
\end{align*}}
\rev{Take $\e_0>0$ sufficiently small such that the number of connected components of
$\bigcup_{\ell=1}^N\widetilde{\Delta}_\ell$ is constant for every $0<\e<\e_0$.
By the Gershgorin circle theorem \cite[Theorem 6.11]{MAnalysis}, after relabelling the eigenvalues we obtain
$$
\left|\lambda_{k,\e}^{(\ell)}-e^{-2\pi i k\alpha_\ell}\right|
\leq2\max_{1\leq r \leq N}(1-W_{\e,r,r})
\ \text{for every}\ \ell\in\{1,\ldots,N\},
$$
which proves the first inequality.}

\rev{We now prove the second inequality.
Fix $\ell\in\{1,\ldots,N\}$. By the reverse triangle inequality,
\begin{align}
\left|1-\left|\lambda_{k,\e}^{(\ell)}\right|\right|
=\left|\left|e^{-2\pi i  k\alpha_\ell }\right|-\left|\lambda_{k,\e}^{(\ell)}\right|\right|
\leq \left|\lambda_{k,\e}^{(\ell)}-e^{-2\pi i k\alpha_\ell}\right|.\label{eq:rti}
\end{align}}
\rev{Then, by the triangle inequality and \eqref{eq:rti},
\begin{align*}
\left|e^{i\arg(\lambda_{k,\e}^{(\ell)})}-e^{-2\pi i k\alpha_\ell}\right|
&=\left|\frac{\lambda_{k,\e}^{(\ell)}}{\left|\lambda_{k,\e}^{(\ell)}\right|}-e^{-2\pi i k\alpha_\ell}\right|\leq\left|\frac{\lambda_{k,\e}^{(\ell)}}{\left|\lambda_{k,\e}^{(\ell)}\right|}-\lambda_{k,\e}^{(\ell)}\right|
+\left|\lambda_{k,\e}^{(\ell)}-e^{-2\pi i k\alpha_\ell}\right|\\
&=\left|1-\left|\lambda_{k,\e}^{(\ell)}\right|\right|
+\left|\lambda_{k,\e}^{(\ell)}-e^{-2\pi i k\alpha_\ell}\right|\leq2\left|\lambda_{k,\e}^{(\ell)}-e^{-2\pi i k\alpha_\ell}\right|\\
&\leq4\max_{1\leq r\leq N}(1-W_{\e,r,r}).
\end{align*}
}
\rev{Finally, let
$\Delta:= \left|\arg(\lambda_{k,\e}^{(\ell)})-(-2\pi k\alpha_\ell)\ (\mathrm{mod}\ 2\pi)\right| \in[0,\pi]$ be the smallest angular distance between $\mathrm{arg}(\lambda_{k,\e}^{(\ell)})$ and $-2\pi i k \alpha_\ell$. On the unit circle,
$$
\left|e^{i\arg(\lambda_{k,\e}^{(\ell)})}-e^{-2\pi i k\alpha_\ell}\right| =2\sin(\Delta/2)\geq\frac{2}{\pi}\Delta,
$$
and therefore
$$
\left|\arg(\lambda_{k,\e}^{(\ell)})-(-2\pi k\alpha_\ell)\ (\mathrm{mod}\ 2\pi)\right| = \Delta\leq\frac{\pi}{2}\left|e^{i\arg(\lambda_{k,\e}^{(\ell)})}-e^{-2\pi i k\alpha_\ell}\right|
\leq2\pi\max_{1\leq r\leq N}\left(1-W_{\e,r,r}\right).
$$}

\end{proof}

% First part of the proof, same structure as before, but with \ell (not r)
% First part of the proof, same structure as before, but with \ell (not r)

% \m{We recall that $L$ denotes an $S$-banded width vector and recursively denote $N_0,\ldots,N_{S}$ as
%$$N_0 = 0 \ \text{and }N_j = N_{j-1} + L_j  \text{for every }j\in\{1,\ldots, N\}.$$}

\subsection{Distinct eigenvalues of \texorpdfstring{$\mathcal{P}_\e$}{Pe} for small \texorpdfstring{$\varepsilon>0$}{e>0} }
\label{sec:3.2}
The proofs of Theorem \ref{thm:1}(1), Theorem \ref{thm:1}(2)(i), and Theorem \ref{thm:2}(1) all rely on the eigenspaces of $\mathcal{P}_\e$ being one-dimensional.
Recalling the correspondence between the spectrum of $\mathcal P_\e$ and $D_{k,\alpha}W_\e$ established in Lemma \ref{lem:eform}, we adopt the following simplified notation in this and all subsequent sections. {Fixing} $\beta$, $L$, and  $k\in\mathbb{Z}$, for every $\e>0$ we write \begin{align}
        D_\beta := D_{k,\beta,L}\ \text{and }P_{\e,\beta} := D_\beta W_\e.
        \label{eq:df}
    \end{align}
For $v,w\in\mathbb C^N$ we denote the inner product between $v$ and $w$ as
$\langle v , w\rangle =\sum_{j=1}^{N} v_j \overline{w_j}$, and for every $s\in\{1,\dots, N\}$ we set 
    \begin{align}
        V_s &:= \{v\in \mathbb C^N: v_j = 0\ \text{if }j\leq N_{s-1}\ \text{or }j>N_{s}\}\nonumber \\
        &= \{v\in \mathbb C^N: v_j = 0\ \text{for every } j\in \{1,\ldots, N\}\setminus  B_s\}.\label{eq:Vi}
    \end{align}
%    \gf{I don't think the second characterisation is correct with $B_s$...$B_s$ is associated with indices $\ell$ which enumerate eigenvalues.}\m{I have fixed it.} 
The following three results establish  that for every 
$\beta \in \Gamma$ the matrix $P_{\e,\beta}$ is diagonalizable for every $\e>0$ sufficiently small.

% \m{We recall that $L$ denotes an $S$-banded width vector and recursively denote $N_0,\ldots,N_{S}$ as
%$$N_0 = 0 \ \text{and }N_j = N_{j-1} + L_j  \text{for every }j\in\{1,\ldots, N\}.$$}

\begin{theorem}
    \label{thm:gamma} 
Under the hypotheses of Theorem \ref{thm:1}
%Let \m{$L$ be a band width vector} and $\Gamma_k\subset \R^S$ be the set
%$$\Gamma_k := \{\beta=(\beta_1,\ldots,\beta_S)\in \R^S; e^{-2 \pi i k \beta_j} \neq e^{-2 \pi i k \beta_s}\ \text{for every }j,s\in\{1,\ldots,S\}\}. $$
there exists $\gamma_k>0$ such that for every $0<\e<\gamma_k$ the matrix $P_{\e,\beta}$ has $N$ distinct eigenvalues. 
\end{theorem}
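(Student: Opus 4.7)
The plan is to combine Proposition \ref{prop:1}(2) (equivalently, Lemma \ref{prop:xama}) with a first-order perturbation argument applied to the analytic matrix family
\[
\varepsilon\mapsto P_{\varepsilon,\beta} \;=\; D_\beta + \varepsilon D_\beta\dot W.
\]
First I would observe that $P_{0,\beta}=D_\beta$ is diagonal with the $S$ eigenvalues $e^{-2\pi i k\beta_1},\ldots,e^{-2\pi i k\beta_S}$, pairwise distinct by the assumption $\beta\in\Gamma$, whose respective eigenspaces are the subspaces $V_s$ of \eqref{eq:Vi}, of dimensions $L_s$. Choose pairwise disjoint open disks $\Delta_s\ni e^{-2\pi i k\beta_s}$. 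Lemma \ref{prop:xama} then yields $\varepsilon_1>0$ such that, for every $0<\varepsilon<\varepsilon_1$, the $N$ eigenvalues of $P_{\varepsilon,\beta}$ split into $S$ disjoint packets, the $s$-th packet consisting of exactly $L_s$ eigenvalues (with algebraic multiplicity) inside $\Delta_s$. It therefore suffices to prove that, for each $s$ and all sufficiently small $\varepsilon>0$, the $L_s$ eigenvalues forming the $s$-th packet are pairwise distinct.

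To treat one packet, I would apply the standard reduction from analytic perturbation theory. Fix $s\in\{1,\ldots,S\}$ and let $\gamma_s\subset\Delta_s$ be a small positively oriented circle around $e^{-2\pi i k\beta_s}$. The total spectral projection
\[
\Pi_s(\varepsilon) \;:=\; \frac{1}{2\pi i}\oint_{\gamma_s}\bigl(\zeta-P_{\varepsilon,\beta}\bigr)^{-1}\,d\zeta
\]
is an analytic family of $L_s$-dimensional projections with $\Pi_s(0)$ equal to the coordinate projection onto $V_s$. By Kato's transformation function, one may select a basis of $\operatorname{Range}\Pi_s(\varepsilon)$ depending analytically on $\varepsilon$, in which the reduced operator $\widetilde P_s(\varepsilon):=\Pi_s(\varepsilon)P_{\varepsilon,\beta}\Pi_s(\varepsilon)$ is represented by an $L_s\times L_s$ matrix, analytic in $\varepsilon$, whose eigenvalues are exactly the $s$-th packet eigenvalues. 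Because $D_\beta$ acts on $V_s$ as $e^{-2\pi i k\beta_s}\mathrm{Id}_{L_s}$ and the principal $L_s\times L_s$ submatrix of $\dot W$ is by definition $\hat W_s$, a direct computation in the natural basis of $V_s$ gives
\[
\widetilde P_s(\varepsilon) \;=\; e^{-2\pi i k\beta_s}\mathrm{Id}_{L_s} + \varepsilon\, e^{-2\pi i k\beta_s}\hat W_s + O(\varepsilon^2).
\]

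By condition (3) of $L$-admissibility (Definition \ref{Ladmiss}), $\hat W_s$ has $L_s$ distinct eigenvalues, hence so does its nonzero scalar multiple $e^{-2\pi i k\beta_s}\hat W_s$. Applying the Rellich--Kato splitting theorem (see e.g.\ Kato, \emph{Perturbation Theory for Linear Operators}, Thm.\ II.5.11) to the analytic family $\widetilde P_s(\varepsilon)$ yields convergent expansions $e^{-2\pi i k\beta_s}+\varepsilon\hat\lambda_k^{(\ell)}+O(\varepsilon^2)$, $\ell\in B_s$, for the $L_s$ packet eigenvalues, whose first-order coefficients $\hat\lambda_k^{(\ell)}$ are precisely the distinct eigenvalues of $e^{-2\pi i k\beta_s}\hat W_s$. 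Distinctness of the linear terms immediately forces distinctness of the $L_s$ packet eigenvalues for all sufficiently small $\varepsilon$; choosing $\gamma_k>0$ smaller than every resulting threshold across $s=1,\ldots,S$ concludes the proof.

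The only nontrivial point is the identification of the reduced first-order perturbation on $V_s$ with $e^{-2\pi i k\beta_s}\hat W_s$; once this is established, the $L$-admissibility hypothesis and the splitting theorem deliver the conclusion with no further work. Note that the argument does not rely on self-adjointness of $P_{\varepsilon,\beta}$, which is convenient since $D_\beta$ is generally not self-adjoint.
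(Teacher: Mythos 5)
Your argument is correct, and it takes a genuinely different route from the paper's. You use the classical Kato reduction process: packet separation via Gershgorin (Lemma~\ref{prop:xama}), then analytic Riesz projections and the transformation function to produce an analytic $L_s\times L_s$ family $\widetilde P_s(\varepsilon)$ whose first-order term is $e^{-2\pi i k\beta_s}\hat W_s$ (the key identification is right, since $\pi_s$ commutes with $D_\beta$ and $D_\beta|_{V_s}=e^{-2\pi i k\beta_s}\mathrm{Id}$, so $\pi_s D_\beta\dot W\pi_s|_{V_s}=e^{-2\pi i k\beta_s}\hat W_s$). Condition (3) of $L$-admissibility then gives $L_s$ distinct first-order eigenvalues, and distinctness of the packet eigenvalues for small $\varepsilon>0$ follows. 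In fact you do not even need the Rellich--Kato splitting theorem (which, via II.5.11, you cite in its self-adjoint form, a slight mismatch here since $P_{\varepsilon,\beta}$ is not normal): it suffices to observe that $(\widetilde P_s(\varepsilon)-e^{-2\pi i k\beta_s}\mathrm{Id})/\varepsilon = e^{-2\pi i k\beta_s}\hat W_s + O(\varepsilon)$ has distinct eigenvalues for small $\varepsilon$ by continuity of the spectrum. The paper instead argues by contradiction, assuming an $\varepsilon_n\to 0$ along which $P_{\varepsilon_n,\beta}$ has a repeated eigenvalue, and splits into two cases: a Jordan block, ruled out via Lemma~\ref{lem:degenerated} (the bilinear form $\langle f, D_\beta\bar f\rangle$ vanishes on generalised eigenvectors, which contradicts the limiting eigenvector of $\hat W_L$ being proportional to its own conjugate), and a genuine two-dimensional eigenspace, ruled out via Lemmas~\ref{lem:zero} and~\ref{lem:46} together with Definition~\ref{Ladmiss}(3). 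Your approach is more streamlined and simultaneously delivers the first-order eigenvalue expansion that the paper later proves separately (Theorem~\ref{thm:3}(ii)); the paper's more elementary contradiction argument has the advantage that its preparatory Lemmas~\ref{lem:zero},~\ref{lem:degenerated},~\ref{lem:46} are re-used downstream for the eigenvector convergence and response statements.
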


Before proving the above theorem, we establish three preparatory lemmas.

%\gf{Matheus, can you harmonise notation throughout section 3.2? e.g. $P^\varepsilon_\beta$, $f_\varepsilon$, $D_\beta$, $\lambda^{(j)}_\varepsilon$ are some of the notation that is not consistent with what comes before.} \m{Done.}
\begin{lemma}\label{lem:zero}
%\m{Let $k\in\mathbb Z$, $L$ be an $S$-width vector, $\beta \in\Gamma_k$ and} 
Fix $s\in\{1,\ldots, S\}.$ For each $\e>0$ choose a sequence of unit-norm vectors $\{f_\e\}_{\e >0}\subset \mathbb C^N$ such that for every $\e>0$,
$$P_{\e,\beta} f_\e = \lambda_\e f_\e\ \text{and }\lambda_\e\xrightarrow[]{\e\to 0}e^{-2\pi i k \beta_s}.$$ 
Then, $ \left(\displaystyle f_{\e}\right)_j\xrightarrow[]{\e\to 0} 0$ for every $j\in\{1,\ldots,N\}\setminus B_{s}.$ 

% Let $s$ be an element of \{1,\ldots, S\}. For each $\e>0$ choose a sequence of unit-norm vectors $\{f_\e\}_{\e >0}\subset \mathbb C^N$ such that for every $\e>0$ small such that
% $$P_{\e,\beta} f_\e = \lambda_\e f_\e\ \text{and }\lambda_\e\xrightarrow[]{\e\to 0}e^{-2\pi i k \beta_s}.$$
% Then, $ \left(\displaystyle f_{\e}\right)_r\xrightarrow[]{\e\to 0} 0$ for every $r\leq N_{s-1}$ or $r>N_{s}.$

\end{lemma}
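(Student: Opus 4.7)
\textbf{Proof plan for Lemma \ref{lem:zero}.}
My plan is to exploit the fact that $D_\beta$ acts as the scalar $e^{-2\pi i k \beta_t}$ on each band subspace $V_t$ defined in \eqref{eq:Vi}, together with the orthogonal decomposition $\mathbb{C}^N = \bigoplus_{t=1}^S V_t$ induced by the bands. The hypothesis $\beta \in \Gamma$ will supply the non-resonance needed to isolate the component in $V_s$.

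Concretely, I would decompose the eigenvector as $f_\e = \sum_{t=1}^S f_\e^{(t)}$ with $f_\e^{(t)} \in V_t$, and for any $v\in\mathbb{C}^N$ write $v^{(t)}$ for its projection onto $V_t$. Writing $P_{\e,\beta} = D_\beta + \e\, D_\beta \dot{W}$ and projecting the eigenvalue equation $P_{\e,\beta} f_\e = \lambda_\e f_\e$ onto $V_t$, the block structure of $D_\beta$ yields
$$
\left(\lambda_\e - e^{-2\pi i k \beta_t}\right) f_\e^{(t)} \;=\; \e\, e^{-2\pi i k \beta_t}\, (\dot{W} f_\e)^{(t)}.
$$
For $t \neq s$, the defining property of $\Gamma$ ensures $e^{-2\pi i k \beta_s} \neq e^{-2\pi i k \beta_t}$, so $\lambda_\e - e^{-2\pi i k \beta_t}$ converges to a nonzero constant and is bounded away from $0$ for all sufficiently small $\e>0$. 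Since $\|f_\e\|=1$, we have $\|(\dot{W} f_\e)^{(t)}\| \leq \|\dot{W}\|$ uniformly in $\e$, and hence
$$
\|f_\e^{(t)}\| \;\leq\; \frac{\e\, \|\dot{W}\|}{\,\bigl|\lambda_\e - e^{-2\pi i k \beta_t}\bigr|\,} \;=\; O(\e)\quad\text{as } \e \to 0.
$$
In particular $(f_\e)_j \to 0$ for every $j \in B_t$, and ranging over $t \neq s$ covers all of $\{1,\ldots,N\}\setminus B_s$.

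No substantial obstacle is expected: the argument is essentially a one-line non-resonance estimate once the band decomposition is set up. The only thing worth double-checking is the elementary observation that $D_\beta$ is block-diagonal with respect to $\bigoplus_t V_t$ (immediate from \eqref{PkbL} and Definition \ref{def:cluster1}), so that projecting the equation onto $V_t$ commutes cleanly with the leading term $D_\beta f_\e$.
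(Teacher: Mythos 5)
Your proposal is correct, but it takes a genuinely different route from the paper's. The paper proves this lemma by a compactness argument: it extracts a convergent subsequence $f_{\e_r}\to f_*$, passes to the limit in the eigenvalue equation to get $D_\beta f_* = e^{-2\pi i k\beta_s} f_*$, uses $\beta\in\Gamma$ to conclude that $(f_*)_j=0$ for $j\notin B_s$, and then invokes the standard sub-subsequence argument to upgrade this to convergence of the full family. Your proof instead projects the eigenvalue equation directly onto each band subspace $V_t$, using the block-diagonal form of $D_\beta$ to obtain the exact identity $\bigl(\lambda_\e - e^{-2\pi i k \beta_t}\bigr) f_\e^{(t)} = \e\, e^{-2\pi i k \beta_t}\, (\dot W f_\e)^{(t)}$, and then divides by the non-resonant factor. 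This is more than the lemma requires: you get the quantitative rate $\|f_\e^{(t)}\| = O(\e)$ for $t\neq s$, not merely convergence to zero. In fact, this sharper $O(\e)$ decay of the off-band components is precisely what the paper later re-derives and uses in Proposition \ref{prop:linresponselast}, so your single computation simultaneously proves Lemma \ref{lem:zero} and anticipates the key estimate needed for the linear-response analysis. The only small caveat is that the hypothesis $\beta\in\Gamma$ is not explicitly stated in the lemma itself; it is inherited from the surrounding discussion (Theorem \ref{thm:gamma}), and you correctly identify it as the source of the non-resonance bound.
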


\begin{proof}
 Since $\|\displaystyle f_\e\|=1$ and for every $\e$ small enough there exists a converging subsequence $\displaystyle \{f_{\e_r}\}_{r\in\mathbb N}\subset \{f_{\e}\}_{\e>0}$ such that $\displaystyle f_{\e_r} \xrightarrow[]{\e_r\to 0} f_*$.  It follows that
 $$D_{\beta} f_* = \lim_{k\to\infty} D_{\beta} W_{\e_r}  f_{\e_r} = \lambda_{\e_r} f_{\e_r} = e^{-2\pi i k \beta_s} f_*\ \text{and therefore}\ \left(D_\beta - e^{-2\pi i k\beta_s}\right)f_*=0.$$
 Observe that for each $j\in B_t$ for some $t\in\{1,\ldots,N\}$ we obtain that
$$ \left(e^{-2\pi i k \beta_t} - e^{-2\pi i k \beta_s}\right)(f_*)_j =0$$
  Since $\beta\in \Gamma$, then $\min_{t\neq s} |e^{-2\pi i k \beta_{s_1}} - e^{-2\pi i k \beta_{s_2}}|>0$ which implies that  $\displaystyle f_* =0$ for every $j\in\{1,\ldots,N\}\setminus B_s.$  Hence, for every subsequence of $\{\displaystyle f_\e\}_{\e>0}$, it is possible to construct a subsubsequence such that all coordinates $j\in\{1,\ldots, N\}\setminus B_s$ 
 %different from $N_{s}+1,\ldots,N_{s}$ 
 converge to $0$ as $\e\to 0$, which implies the desired result.
\end{proof}

\begin{lemma}
    \label{lem:degenerated}
Assume that there exist vectors $f,g \in \mathbb C^N$, $\beta\in\R^S$ and $\e>0$ such that
$$P_{\e,\beta}  f= \lambda f\ \text{and }P_{\e,\beta}  g = \lambda g + f.$$
Then, $\langle f , D_{\beta} \overline{f}\rangle =0.$
\end{lemma}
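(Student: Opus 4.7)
The plan is to find a complex-bilinear form on $\mathbb{C}^N$ with respect to which the matrix $P_{\varepsilon,\beta}$ is self-adjoint, and then to exploit the Jordan-chain relation between $f$ and $g$ in the standard way to force $\langle f, D_\beta \overline{f}\rangle = 0$.

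Concretely, I would introduce the pairing
\[
B(u,v) := \langle u, D_\beta \overline{v}\rangle = \sum_{j=1}^{N}\overline{D_{\beta,jj}}\,u_j\,v_j,
\]
which is complex-bilinear (not sesquilinear) and symmetric in $u,v$ because $D_\beta$ is diagonal. The key algebraic point I would establish is the self-adjointness identity $B(P_{\varepsilon,\beta} u, v) = B(u, P_{\varepsilon,\beta} v)$. This rests on just two ingredients: since each diagonal entry of $D_\beta$ has unit modulus, $\overline{D_{\beta,jj}}\,D_{\beta,jj}=1$, reducing $B(P_{\varepsilon,\beta}u,v)$ to $\sum_j (W_\varepsilon u)_j v_j$; and by the symmetry $W_\varepsilon^T = W_\varepsilon$ from Definition \ref{Ladmiss}, this in turn equals $\sum_j u_j (W_\varepsilon v)_j = B(u, P_{\varepsilon,\beta} v)$.

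With this symmetry in hand, the lemma follows from a one-line chain: applying the self-adjointness of $P_{\varepsilon,\beta}$ under $B$ to the pair $(f,g)$ and inserting the hypotheses $P_{\varepsilon,\beta} f = \lambda f$ and $P_{\varepsilon,\beta}g = \lambda g + f$, one has
\[
\lambda\, B(f,g) = B(P_{\varepsilon,\beta} f, g) = B(f, P_{\varepsilon,\beta} g) = B(f, \lambda g + f) = \lambda\, B(f,g) + B(f,f),
\]
so $B(f,f) = 0$, which is precisely the claim $\langle f, D_\beta \overline{f}\rangle = 0$.

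There is no substantive obstacle: once the correct bilinear pairing is spotted, everything reduces to a two-line computation. The only conceptual point worth emphasising is that the argument genuinely requires the complex-bilinear form $B$ rather than the Hermitian inner product, because the complex phases along the diagonal of $D_\beta$ prevent $P_{\varepsilon,\beta}$ from being Hermitian with respect to $\langle\cdot,\cdot\rangle$; they do, however, leave it self-adjoint with respect to $B$, which is exactly what the Jordan-chain argument demands.
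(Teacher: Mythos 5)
Your proof is correct and follows essentially the same route as the paper: both arguments exploit the self-adjointness of $P_{\e,\beta}$ with respect to the complex-bilinear pairing $B(u,v)=\langle u, D_\beta \overline{v}\rangle$, combined with the standard Jordan-chain cancellation. You merely make the bilinear form explicit rather than implicit, and as a small bonus this lets you write $B(\lambda f,g)$ directly instead of dividing by $\lambda$ as the paper does, so your version holds without tacitly assuming $\lambda\neq 0$.
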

\begin{proof}
Since $P_{\e,\beta} g = \lambda g +  f$, then $\overline {D_{\beta} W_\e  g} = \overline{\lambda g + f}.$ Therefore $W_\e  \overline{g} = \overline{\lambda} D_{\beta} \overline{g} +   D_{\beta}  \overline{f}.$ Hence,
\begin{align*}
    \langle f, D_{\beta}  \overline{g}\rangle &=\frac{1}{\lambda} \langle  P_{\e,\beta} f , D_\beta \overline{g}\rangle = \frac{1}{\lambda}  \langle D_{\beta} W_\e  f, D_{\beta} \overline{g}\rangle =  \frac{1}{\lambda}  \langle  f, W_\e \overline{g}\rangle\\
    &=  \frac{1}{\lambda}  \langle  f, \overline{\lambda} D_{\beta}\overline{g} +   D_{\beta}  \overline{f}\rangle =    \langle  f,  D_{\beta}  \overline{g}\rangle +  \frac{1}{\lambda} \langle  f,  D_{\beta}  \overline{f}\rangle.
\end{align*}
Thus, $\langle  f,  D_{\beta} \overline{f}\rangle = 0.$
\end{proof}

\begin{lemma}\label{lem:46}
Let $\beta \in \R^S$ and $\{f_\e\}_{\e>0}\subset \mathbb C^N$ be a sequence of unit-norm  vectors such that for every $\e>0$ small
$$P_{\e,\beta}  f_\e = \lambda_\e f_\e \ \text{and }\lambda_\e \xrightarrow[]{\e \to 0} e^{-2\pi i k \beta_s}\ \text{for some }s\in\{1,\ldots, S\}. $$
Then, the accumulation points of $\{\displaystyle f_{\e}\}_{\e>0}$ as $\e\to 0$ are eigenvectors of the matrix $\hat{P}_{k,\beta,L}$ (defined in \ref{PkbL}). %\m{I was just helping the reader to locate the def of $\hat{P}_{k,\beta,L}$}
%$\text{and }\lambda_\e \to \e^{-2\pi i k \beta_\ell}\ \text{for some }\ell\in\{1,\ldots,S\}.$   
\end{lemma}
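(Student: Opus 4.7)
The plan is to derive the required eigenvalue relation for an accumulation point $f_*$ directly from the identity $P_{\e,\beta} f_\e = \lambda_\e f_\e$, rewritten via $W_\e = \mathrm{Id} + \e \dot W$ as
\[
(D_\beta - \lambda_\e \mathrm{Id}) f_\e = -\e\, D_\beta \dot W f_\e.
\]
First, I would invoke Lemma \ref{lem:zero}: any convergent subsequence $f_{\e_r} \to f_*$ has a limit whose coordinates vanish outside $B_s$. Compactness of the unit sphere in $\C^N$ ensures such subsequences exist, and $\|f_\e\| = 1$ combined with the concentration on $B_s$ forces $\|f_*\| = 1$, so $f_* \neq 0$ and $\supp(f_*) \subseteq B_s$.

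Next, I would read the eigenvalue equation one coordinate at a time. For $j \in B_s$ one has $(D_\beta)_{jj} = e^{-2\pi i k\beta_s}$, so the $j$-th component of the displayed identity, divided by $\e\, e^{-2\pi i k\beta_s}$, becomes
\[
(\dot W f_\e)_j = \mu_\e (f_\e)_j, \qquad \mu_\e := \frac{\lambda_\e\, e^{2\pi i k\beta_s} - 1}{\e}, \qquad j \in B_s.
\]
Proposition \ref{prop:1}(2) gives $|\lambda_\e - e^{-2\pi i k\beta_s}| \leq 2\max_{\ell}|\dot W_{\ell\ell}|\,\e$, so $\{\mu_\e\}$ is bounded; after a further extraction I may assume $\mu_\e \to \mu \in \C$. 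Passing to the limit along this subsequence produces $(\dot W f_*)_j = \mu (f_*)_j$ for every $j \in B_s$.

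Finally I would translate this coordinatewise identity into the desired eigenvalue equation. Because $\supp(f_*) \subseteq B_s$, the sum $(\dot W f_*)_j = \sum_{j'} \dot W_{jj'} (f_*)_{j'}$ effectively runs only over $j' \in B_s$, so the previous relation reads $\hat W_s (f_*|_{B_s}) = \mu (f_*|_{B_s})$ in $\C^{L_s}$. The block-diagonal form of $\hat W_L$ together with $\supp(f_*) \subseteq B_s$ then gives $\hat W_L f_* = \mu f_*$ as vectors in $\C^N$ (the components outside $B_s$ vanish automatically). Multiplying by $D_\beta$ and once more using $\supp(f_*) \subseteq B_s$ yields $\hat P_{k,\beta,L} f_* = \mu e^{-2\pi i k\beta_s} f_*$, so $f_*$ is an eigenvector of $\hat P_{k,\beta,L}$ with eigenvalue $\mu e^{-2\pi i k\beta_s}$.

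The one real subtlety is the boundedness of $\mu_\e$: dividing by $\e$ would be dangerous without a matching estimate on the numerator, and the argument works precisely because Proposition \ref{prop:1}(2) provides exactly such a bound of order $\e$. Once that is secured, the remaining steps amount to a passage to the limit combined with the block-diagonal algebra of $\hat W_L$ and $D_\beta$.
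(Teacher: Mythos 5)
Your argument is correct and follows essentially the same route as the paper: use Lemma \ref{lem:zero} to localise the limit $f_*$ in $V_s$, use the Gershgorin bound from Lemma \ref{prop:xama} (equivalently Proposition \ref{prop:1}(2)) to extract a convergent subsequence of the scaled quantity $\mu_\e = \frac{e^{2\pi i k\beta_s}\lambda_\e - 1}{\e}$, pass to the limit, and then use the block structure of $\hat W_L$ and $D_\beta$ to upgrade the coordinatewise identity on $B_s$ to the eigenvalue equation for $\hat P_{k,\beta,L}$. The only cosmetic difference is that you read the relation coordinatewise on $B_s$ where the paper applies the orthogonal projection $\pi_s$, which is the same operation; you also add the small but worthwhile observation that $\|f_*\|=1$ ensures the accumulation point is a genuine (nonzero) eigenvector.
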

\begin{proof}
%Recall that $\lambda_{\e} \to e^{-2\pi i k \beta_s}$, where $s\in\{1,\ldots, S\}$  satisfies $N_{s-1}< j\leq N_{s}$. 
Consider a converging subsequence $\left\{\displaystyle f_{\e_r}\right\}_{r\in\mathbb N}\subset\left\{\displaystyle f_{\e}\right\}_{\e>0}$ such that $\displaystyle f_{\e_r} \xrightarrow[]{\e_r\to 0} \displaystyle f_*$.  From Lemma \ref{prop:xama}, we can choose a subsubsequence $\{{\e_r'}\}_{r'\in\mathbb N} \subset \{{\e_r}\}_{r\in\mathbb N}$ such that
%\gf{some typo below?}
$$\frac{\lambda_{\e'_{r}} - e^{-2\pi i k \beta_s} }{\e_r'} \xrightarrow[]{\e_r' \to 0} \lambda_*.$$
Moreover, from Lemma \ref{lem:zero} we have that $\displaystyle f_*\in V_s$ (see \eqref{eq:Vi}). Since
$$ \lambda_\e f_\e  = P_{\e,\beta} f_\e= D_{\beta} W_\e f_\e =  D_{\beta} \left(\mathrm{Id}+\e \dot{W}\right) f_\e .$$
We obtain that $\displaystyle\dot{W} f_\e = \frac{1}{\e}  \left( \lambda_\e D_{k,-\beta,L}  f_\e  - f_\e \right)$ for each $\e>0.$ Let $\pi_s:\C^N\to V_s$ the orthogonal projection from $\C^N$ into $V_s$. It follows that 
\begin{align*}
   \pi_s \dot{W} f_\e &= \frac{1}{\e}  \pi_s \left( \lambda_\e D_{k,-\beta,L}  f_\e  - f_\e \right) = \frac{e^{2\pi i k \beta_s } \lambda_\e -1}{\e}   \pi_s f_\e.
\end{align*}
By taking $\e_r'\to 0$ and observing that $\displaystyle \pi_s f_* =f_*,$ we obtain that $\displaystyle \pi_s \dot{W} f_* = \lambda_*f_*.$ Thus, $\displaystyle \hat{W}_{L} f_* = \lambda_*f_*$ (see \eqref{What}). Therefore, we obtain that
$$\hat{P}_{k,\beta,L} f_* = D_{\beta} \hat{W}_{L} f_* = e^{2\pi i k \beta_s} \lambda_*f_*.$$

\end{proof}

\begin{proof}[Proof of Theorem \ref{thm:gamma}] Assume for a  contradiction that for every $\e_0>0$ there exists $0<\e <\e_0$ such that $D_{\beta} W_\e$ does not have $N$ distinct eigenvalues. Then, there exists a sequence of real numbers  $\e_n\xrightarrow[]{}0$, as $n\to\infty,$ and sequences of vectors $\{f_{\e_n}\}_{n\in\mathbb N},\{g_{\e_n}\}_{n\in\mathbb N}\subset\C^N$ 
%\gf{some sort of grammar type here?} 
such that for every $n\in\mathbb N$ one has $\|f_{\e_n}\|=1$ and either
\begin{enumerate}
    \item[$(1)$] $P_{\e,\beta} g_{\e_n} = \lambda_\e g_{\e_n} + f_{\e_n}$, or
    \item[$(2)$] $f_{\e_n}$ and $g_{\e_n}$ are linearly independent and
    $$P_{\e_n,\beta}  f_{\e_n} = \lambda_{\e_n} f_{\e_n}\ \text{and } P_{\e_n,\beta} g_{\e_n}= \lambda_{\e_n} g_{\e_n}. $$
\end{enumerate}

We first consider scenario $(1)$. By passing through a subsequence, if necessary, we can assume that there exists $f_*$ and $\lambda_*$ such that
$$f_{\e_n}\xrightarrow[]{n\to\infty} f_*\ \text{and }\lambda_{\e_n}\xrightarrow[]{n\to\infty} e^{-2\pi i k \beta_s} \ \text{for some }s \in \{1,\ldots, S\}. $$

From Lemmas \ref{lem:zero},  \ref{lem:degenerated} and \ref{lem:46} respectively, we obtain that
\begin{itemize}
    \item[$(a)$] $f_* \in V_{s}$ (see \eqref{eq:Vi}) for some $s\in\{1,\ldots,S\}$;
    \item[$(b)$] $\langle f_{*}, D_{\beta} \overline{f_{*}} \rangle =  \lim_{n\to \infty}  \langle f_{\e_n}, D_{\beta}  \overline{f_{\e_n}} \rangle = 0;$
    \item[$(c)$] $f_*$ is an eigenvector of $\hat{P}_{k,\beta,L}$.
\end{itemize}
From the block structure of $\hat{P}_{k,\beta,L}$ we have that $f_*$ that is an eigenvector of $\hat{W}_{L}$ (see \eqref{What}). Since $f_*\in V_s$ and all the blocks of $\hat{W}_{L}$ are symmetric real matrices with distinct (real) eigenvalues, we obtain that $f_*$ and $\overline{f_*}$ are linearly dependent, which implies that
$\langle f_{*}, D_{\beta} \overline{f_{*}} \rangle \neq 0$, contradicting item $(b)$ above.

Now we assume $(2)$. Define
$$h_{\e_n} := \frac{1}{\left\|g_{\e_n} - \langle g_{\e_n}, f_{\e_n} \rangle f_{\e_n} \right\|} \left(g_{\e_n} - \langle g_{\e_n}, f_{\e_n} \rangle f_{\e_n} \right).$$
By passing, if necessary, through a subsequence, we can assume $f_*,h_*\in \C^N$ that
$$f_{\e_n}\xrightarrow[]{n\to\infty} f_*,\ h_{\e_n}\xrightarrow[]{n\to\infty}h_*\ \text{and }\frac{\lambda_{\e_n} - e^{-2\pi i k \beta_s}}{\e_n}\xrightarrow[]{n\to\infty} \lambda_* \ \text{for some }s \in \{1,\ldots, S\}.$$
From Lemma \ref{lem:zero} we have that  $f_*,h_*\in V_s\setminus\{0\}$ and by construction $\langle f_*,h_*\rangle = 0$. 
%Repeating the same computations presented in the proof of Lemma \ref{lem:46}, we obtain that
From Lemma \ref{lem:46} we obtain that
$$\hat{P}_{k,\beta,L}  f_* = \lambda_* f_*\ \text{and }\hat{P}_{k,\beta,L}  h_* = \lambda_* h_*,$$
and therefore
\begin{align}\label{eq:contradiction}
  \pi_s \hat{W}_{L} f_* = e^{2\pi k \beta_s } \lambda_*  f_*\ \text{and } \pi_s \hat{W}_{L} h_* =  e^{2\pi k \beta_s } \lambda_*  h_*,  
\end{align}
where $\pi_s :\mathbb C^N \to V_s$ is the $\mathbb C^N$-orthogonal projection onto $V_s$. Since $W_\e$ is an $L$-admissible family of matrices, Definition \ref{Ladmiss}(3) implies that $\pi_s \hat{W}_{L}:V_s \to V_s$ has $N$ distinct eigenvalues, which \rev{contradicts} \eqref{eq:contradiction} since $\langle f_*,h_*\rangle = 0$.
\end{proof}

% As a consequence of Theorem \ref{thm:gamma}, we obtain that the set for each $\beta$ lying in the set
% \begin{align}
% \Gamma &:= \bigcap_{k\in\mathbb Z} \Gamma_k= \{\beta\in \R^S; e^{2 \pi i k \beta_j} \neq e^{2 \pi i k \beta_s}\ \text{for every }j,s\in\{1,\ldots,S\}\ \text{and }k\in\mathbb Z\},\label{eq:gamma}
% \end{align} the operator $\mathcal P_\e$ possesses $N$ distinct eigenvalues associated to the Fourier mode $k\in\mathbb Z$ for sufficiently small $\e$ (depending on $k$). The following theorem describes the limiting behaviour of the eigenvalues and eigenvectors of $P_{\e,\beta}$ as $\e\to 0$, such a theorem concludes the proof of Theorem \ref{thm:1}.

% As a result of Theorem \ref{thm:gamma}, we find that for each \(\beta\) lying in the set
% \begin{align}
% \Gamma &:= \bigcap_{k\in\mathbb Z} \Gamma_k,
% %= \{\beta\in \R^S; e^{2 \pi i k \beta_j} \neq e^{2 \pi i k \beta_s}\ \text{for every }j,s\in\{1,\ldots,S\}\ \text{and }k\in\mathbb Z\},
% \label{eq:gamma}
% \end{align}
% the operator $\mathcal{P}_\e$ has $N$ distinct eigenvalues associated with the Fourier mode $k \in \mathbb{Z}$ for sufficiently small $\e$ (depending on \(k\)). 
%The following theorem describes the limiting behaviour of the eigenvalues and eigenvectors of $D_{\beta} W_{\e}$ as $\e \to 0$. The theorem below is the last ingredient for the proof of Theorem \ref{thm:1}.

\subsection{Proof of Theorem \ref{thm:1}(2)\texorpdfstring{$(a)$}{(a)} and Theorem \ref{thm:2}(1)}

\label{sec:3.3}

%As a consequence of Lemma \ref{lem:eform}, we can establish Theorems \ref{thm:1} and \ref{thm:2} in scenarios associated with the $0$-th Fourier mode $k=0$, or in the case when $\alpha \in \mathbb{R}^N$ with $\alpha_1 = \ldots = \alpha_N$, i.e., $S = 1$.

\begin{proposition}\label{prop:3.3}
\noindent  Under the assumptions and notation of Theorem \ref{thm:1}, if either $S=1$ or $k=0$, then for each $\ell\in\{1,\ldots,N\},$ $$\lambda_{k,\e}^{(\ell)} =e^{-2 \pi i k \beta_1} +\e \hat{\lambda}_k^{(\ell)}\ \text{and }F_{k,\e}^{(\ell)}(j,x) = f_{k}^{(\ell)}(j)e^{2\pi i k x}.$$
Moreover, $\{f_k^{(1)},\ldots, f_k^{(N)}\}$ is an orthonormal basis of $e^{-2\pi i k\alpha_1}\dot{W}$, and for each $\ell\in\{1,\ldots, N\}$, $\hat{\lambda}_k^{(\ell)}$ is an eigenvalue of  $e^{-2\pi i k\alpha_1}\dot{W}$ associated with $f_k^{(\ell)}$. In particular, $\Pi_{k,\e}^{(\ell)} = \Pi_{k}^{(\ell)}$ for every $\e>0,$ where $\Pi_{k}^{(\ell)}$ is the $L^2$-orthogonal projection onto  $\s\{F_k^{(\ell)}\}$, where $F(j,x)= f_k^{(\ell)}(j) e^{2\pi i k x}$. 
%\m{$\Pi$ is mentioned in the "in particular", it is immediate from the previous result.}\gf{was this responding to a query of mine? or is it new text (if new text, I don't really follow what is being said.}\m{I was answering a question of yours. I think it is erased now.}
%\m{Here, we denote $\la\mbda^{(\ell)}$ as the eigenvalues of $e^{-2\pi k \alpha_1}\dot{W}$ and $f^{(\ell)}$ its respective eigenvector, $\ell =1,\ldots,N$.}
\end{proposition}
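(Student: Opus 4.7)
The plan is to observe that in both of the cases covered by this proposition, the diagonal matrix $D_{k,\alpha}$ degenerates to a scalar multiple of the identity, after which the spectral data of $P_{\e,\beta} = D_{k,\alpha}W_\e$ can be read off directly from that of $\dot{W}$, with no dependence on $\e$ beyond the scalar factor.

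First I would spell out the two cases. If $k=0$, then $D_{0,\alpha}=\mathrm{Id}$, so $e^{-2\pi i k \alpha_1}=1$. If $S=1$, every $\alpha_j$ equals the unique value $\beta_1$, so $D_{k,\alpha}=e^{-2\pi i k \beta_1}\mathrm{Id}$. In either case we can write
\begin{equation*}
P_{\e,\beta} \;=\; e^{-2\pi i k \beta_1}\bigl(\mathrm{Id}+\e \dot W\bigr)
\;=\; e^{-2\pi i k \beta_1}\mathrm{Id}\;+\;\e\,\bigl(e^{-2\pi i k \beta_1}\dot W\bigr).
\end{equation*}
Thus $v\in\mathbb C^N$ is an eigenvector of $P_{\e,\beta}$ with eigenvalue $\lambda$ if and only if $v$ is an eigenvector of $e^{-2\pi i k\beta_1}\dot W$ with eigenvalue $\lambda - e^{-2\pi i k \beta_1}$, independently of $\e$.

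Next I would invoke the $L$-admissibility hypothesis. By Definition \ref{Ladmiss}, $\dot W$ is symmetric with $N$ distinct real eigenvalues, so it admits an orthonormal eigenbasis $\{f_k^{(\ell)}\}_{\ell=1}^N$ in $\mathbb C^N$; let $\mu^{(\ell)}$ denote its eigenvalues. Setting $\hat\lambda_k^{(\ell)}:=e^{-2\pi i k \beta_1}\mu^{(\ell)}$, the previous display yields exactly
\begin{equation*}
\lambda_{k,\e}^{(\ell)}\;=\;e^{-2\pi i k \beta_1}+\e\,\hat\lambda_k^{(\ell)},
\qquad P_{\e,\beta} f_k^{(\ell)}\;=\;\lambda_{k,\e}^{(\ell)} f_k^{(\ell)},
\end{equation*}
and so by Lemma~\ref{lem:eform}, $F_{k,\e}^{(\ell)}(j,x)=f_k^{(\ell)}(j)e^{2\pi i k x}$ is a genuine (not merely generalised) eigenfunction of $\mathcal{P}_\e$, and the labelling convention \eqref{eq:lambdanotation} is respected since $\hat\lambda_k^{(\ell)}\e\to 0$. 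Notice the $\smallO(\e^2)$ and $\smallO(\e)$ error terms of Theorem~\ref{thm:2} vanish identically here, matching item (1)(c).

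Finally, to conclude $\Pi_{k,\e}^{(\ell)}=\Pi_k^{(\ell)}$ for all $\e>0$, I would simply note that the eigenvector $f_k^{(\ell)}$ — and hence the eigenfunction $F_k^{(\ell)}(j,x)=f_k^{(\ell)}(j)e^{2\pi i k x}$ — does not depend on $\e$. Consequently the $L^2$-orthogonal projections onto $\operatorname{span}\{F_k^{(\ell)}\}$ and onto $\operatorname{span}\{F_{k,\e}^{(\ell)}\}$ are identical. There is no significant obstacle in this argument: the proposition is a direct consequence of the fact that $D_{k,\alpha}$ commutes with everything when it is a scalar multiple of the identity.
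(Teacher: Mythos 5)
Your proof is correct and takes essentially the same route as the paper: in both cases $D_{k,\alpha}$ collapses to the scalar $e^{-2\pi i k\beta_1}\mathrm{Id}$, the eigenvalue problem for $P_{\e,\beta}$ reduces to that of $e^{-2\pi i k\beta_1}\dot W$, and $L$-admissibility (symmetric $\dot W$ with $N$ distinct eigenvalues) supplies the $\e$-independent orthonormal eigenbasis, from which the eigenfunction and projection statements follow. The only slip is the intermediate sentence asserting $v$ is an eigenvector of $e^{-2\pi i k\beta_1}\dot W$ with eigenvalue $\lambda - e^{-2\pi i k\beta_1}$ (it should be $(\lambda - e^{-2\pi i k\beta_1})/\e$), but your next display $\lambda_{k,\e}^{(\ell)} = e^{-2\pi i k\beta_1} + \e\hat\lambda_k^{(\ell)}$ carries the $\e$ factor correctly, so nothing breaks.
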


 \begin{proof}
From Lemma \ref{lem:eform} it follows that $ \mathcal P_\e F_\e =\lambda_\e F_\e$ if and only if $F(j,x) = f(j)e^{2\pi i k x}$ and $D_{\beta}W_\e f_\e= \lambda_\e f_\e.$
Since either $S=1$ or $k=0$ we obtain that $D_{\beta} = e^{-2 \pi i k \beta_1} \mathrm{Id}_N.$ Therefore
$$D_{\beta} W_\e= e^{-2 \pi i k \beta_1}W_\e = e^{-2 \pi i k \beta_1} \left(\mathrm{Id}_{N} + \e  \dot{W} \right) .$$

Since either $S=1$ or $k=0$ we obtain that $e^{-2 \pi i k \beta_1}( \mathrm{Id}_{N} + \e   \dot{W}) f_\e = \lambda_\e f_\e$ if and only if
$$  e^{-2 \pi i k \alpha_1} \dot{W} f_\e  =  \frac{ \lambda_\e -e^{-2 \pi i k \beta_1}}{\e} f_\e.$$
% From \cite{Eigen}, the eigenvalues of $\dot{W}$ are
% $$ \lambda^{(j)} = -1 + \cos\left(\pi \frac{j-1}{N}\right),\ j\in\{1,\ldots,N\}$$
% and its respective eigenvectors are
% $$ \left(f_k^{(j)}\right)_s = \cos\left(\pi \frac{(j-1)(2s-1)}{2N}\right)\ \text{for every }j\in\{1,\dots, N\}. $$
Hence, for every $\ell\in\{1,\ldots, N\}$
$$\lambda^{(\ell)}_{k,\e}= e^{-2 \pi i k \beta_1} + \e \hat{\lambda}^{(\ell)} \text{ and}\ f^{(\ell)}_{k,\e} =f^{(\ell)}_k.$$
Observe that $f_k^{(\ell)}$ is an orthonormal basis since $W_\e$ is an $L$-admissible family of matrices.

%\gf{Matheus, can you add details here?  This is one place where a change in the random walk would manifest.} \m{Should I just write the text of the "Gershgorin circle theorem"? I understand that a direct application of such a theorem implies the first part of $(b)$.}\gf{I would say a minimum would be to state the matrices that Gerschgorin is applied to, so that at least the reader can then look at the theorem (for which we should give a sample citation they can look up) and check it provides what is claimed for the specific application here.} \m{About the general case: the Gershgorin circle theorem implies that if $A_n \to \mathrm{Diagonal\ Matrix}$, then $$\text{Eigenvalues of }A_n \to \text{Entries of the Diagonal Matrix}.$$ I believe that in the general case, we obtain a similar result (escaling $\e$ correctly)} By the triangle inequality, Taylor and $\sin x\le x$ for $x\ge 0$, obtain the second equation in (b) \gf{Needs revision to handle $\delta$.}
\end{proof}

\section[Proof of Theorem~\ref{thm:1}(2)(b) and Theorem~\ref{thm:2}: Linear component of (15) and Part~(2)(b)]
{Proof of Theorem~\ref{thm:1}(2)\texorpdfstring{$(b)$}{(b)} and Theorem~\ref{thm:2}: Linear component of \texorpdfstring{$(15)$}{(15)} and Part~(2)\texorpdfstring{$(b)$}{(b)}}

\label{sec:4}

To simplify notation we fix from now on $N, S\in\mathbb N$ with $1<S\leq N$, a cluster length vector $L\in \R^S$, an $L$-admissible family of matrices $W_\e = \mathrm{Id}+\e \dot{W}$, and $k\in \mathbb N$. Moreover, given a function $h:\{1,\ldots,N\}\to \C,$ we denote $(h)_j:=h(j)$ for every $j\in\{1,\ldots,N\},$ and we make no distinction between the vector $((h)_1,\ldots,(h)_N)$ and the function $h$. Finally, we say $(h)_j = 0$ if $h(j)$ is undefined, i.e. when $j\in\mathbb Z\setminus\{1,\ldots,N\}.$

Below, we recall the definition of the complex projective space $\mathbb C\mathbb P^{N-1}$ which is used throughout the rest of the proofs of Theorems \ref{thm:1} and \ref{thm:2}.
\begin{definition}[Complex Projective Space] \label{def:proj} Given $N\in\mathbb N$, we denote
$$\mathbb C \mathbb P^{N-1} = \mathbb C^N\setminus\{0\}/ \sim, $$
where $\sim$ is the equivalence relation in $\mathbb C^N\setminus\{0\}$
$$v\sim w\ \text{if and only if }v=\lambda w\ \text{for some }\lambda\in \mathbb C.$$
We induce a differential structure in $\mathbb C\mathbb P^N$ via the quotient map
\begin{align*}
 \pi : \mathbb C^n\setminus \{0\}&\to\mathbb C\mathbb P^N\\
 v&\mapsto [v] 
\end{align*}
where $[v] \in \mathbb C^n$ is the equivalence class of $v$ in $\sim$ (see \cite[Chapter 1, page 31]{Lee}). Given $w,v\in \mathbb C \mathbb P^{N-1}$ we define the distance between $w$ and $v$ in  $\mathbb C \mathbb P^{N-1}$ as
$$\mathrm{dist}_{\mathbb C\mathbb P^{N-1}} (v,w) = \min_{\theta \in [0,2\pi) } \left\| \frac{v}{\|v\|} - e^{i \theta} \frac{w}{\|w\|} \right\|.$$
    
\end{definition}

\begin{theorem}\label{thm:3}
%\gf{Just to check the best way to state this...does the smallness of $\e$ depend on $\beta$?} \m{it depends on $\beta$} \gf{The smallness of $\e$ depends on $\e$?} 
Let $\beta \in \Gamma$, for each $\e>0$ small enough, let $\{\displaystyle f^{(\ell)}_{\e}\}_{\ell=1}^{N}\subset \mathbb C^N$ denote a unit-norm eigenbasis of $P_{\e,\beta}$.
%, chosen such that the map $\e\in(0,\gamma_k)\to \displaystyle f_{\e}^{(\ell)}\in\mathbb C^N$ is continuous for every $\ell\in\{1,\ldots, N\}$ and let $\gamma_k$ be defined as in Theorem \ref{thm:gamma}. 
Then, 
\begin{enumerate}
    \item[$(i)$] there exists an orthonormal eigenbasis $\{f^{(1)},\ldots, f^{(N)}\}\subset \mathbb C^N$ of
\begin{equation}
    \label{eq:Tb}
\hat{P}_{\beta} := D_\beta \hat{W}_{L},
\end{equation}
where $\hat{W}_{L}$ is defined in \eqref{What} and $D_\beta$ in \eqref{eq:df}, such that for every $j\in\{1,\ldots,N\},$ one has 
\begin{align}\displaystyle [f^{(\ell)}_{\e}] \xrightarrow[]{\e\to 0} [f^{(\ell)}]\ \text{in }\mathbb C\mathbb P^{N-1}\ \text{for every } \ell\in\{1,\ldots,N\}.\label{eq:feconv}
\end{align}
\item[$(ii)$] For $s\in\{1,\ldots,N\}$ and $\ell \in B_s$, let $\hat\lambda^{(\ell)}$ be the  eigenvalue associated to the eigenvector $f^{(\ell)}$ of $\hat{P}_\beta$. Then
\begin{align}
    \lambda_\e^{(\ell)} = e^{-2\pi i k \beta_s} + \e \hat{\lambda}^{(\ell)} + \smallO(\e).\label{eq:lambda1staprox}
\end{align}
\end{enumerate}
%$$\lambda_{\e}^{(\ell)}\xrightarrow[]{\e\to 0} e^{-2\pi i k \beta_\ell}\ \text{and }\frac{ \lambda_{\e}^{(\ell)} - e^{-2\pi i k\beta_s}}{\e}\xrightarrow[]{\e\to 0}  \hat\lambda^{(\ell)},$$
%where $s$ is the unique element of $\{1,\ldots, S\}$ such that $\ell\in B_s$. 
Moreover, $\arg(\hat\lambda^{(\ell)})=\arg(\lambda^{(\ell)})+\pi$ if $|\hat \lambda^{(\ell)}|>0$.
%\gf{Shall we divide $\hat L$ by 2 in order to remove the factor 2 above?}
\end{theorem}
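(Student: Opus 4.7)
The plan is to leverage the subsequential machinery already built in Lemmas \ref{lem:zero}--\ref{lem:46} and upgrade it to genuine limits in $\mathbb{C}\mathbb{P}^{N-1}$ using the distinct-eigenvalue hypothesis on each diagonal block $\hat{W}_s$. Fix $s \in \{1,\ldots,S\}$ and $\ell \in B_s$, and consider an arbitrary sequence $\e_n \to 0$. After passing to a subsequence (using Bolzano--Weierstrass together with Proposition \ref{prop:1}(2)), I may assume simultaneously that $f^{(\ell)}_{\e_n} \to f^{(\ell)}_\ast$ and $(\lambda^{(\ell)}_{\e_n}-e^{-2\pi i k\beta_s})/\e_n \to \hat\lambda^{(\ell)}_\ast$. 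Lemma \ref{lem:zero} forces $f^{(\ell)}_\ast \in V_s$, and the scaling computation from Lemma \ref{lem:46} yields $\hat{P}_\beta f^{(\ell)}_\ast = \hat\lambda^{(\ell)}_\ast f^{(\ell)}_\ast$; the block-diagonal form of $\hat{P}_\beta$ reduces this to $\hat{W}_s (f^{(\ell)}_\ast)_{B_s} = e^{2\pi i k\beta_s}\hat\lambda^{(\ell)}_\ast (f^{(\ell)}_\ast)_{B_s}$. By $L$-admissibility condition (3), the symmetric real matrix $\hat{W}_s$ has $L_s$ distinct eigenvalues with one-dimensional real eigenspaces, so $\hat{P}_\beta|_{V_s}$ has $L_s$ distinct eigenvalues with an orthonormal eigenbasis.

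Extracting a further subsequence diagonalising all $\ell \in B_s$ simultaneously gives candidate limits $\{f^{(\ell)}_\ast\}_{\ell \in B_s}$, each a unit eigenvector of $\hat{P}_\beta|_{V_s}$. To obtain convergence of every $[f^{(\ell)}_\e]$, I would rule out coincidence of two projective limits: if $[f^{(\ell_1)}_\ast]=[f^{(\ell_2)}_\ast]$ for $\ell_1 \neq \ell_2 \in B_s$, then the Gram--Schmidt construction used in case (2) of the proof of Theorem \ref{thm:gamma} produces a unit vector $h_\ast \in V_s$ orthogonal to $f^{(\ell_1)}_\ast$ that is again an eigenvector of $\hat{P}_\beta|_{V_s}$ for the same eigenvalue as $f^{(\ell_1)}_\ast$, contradicting the simplicity of the eigenvalues of $\hat{W}_s$. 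A counting argument ($|B_s|=L_s=\dim V_s$) then gives a bijection between $B_s$ and the eigenbasis of $\hat{P}_\beta|_{V_s}$, and since every subsequence has a further subsequence with the same limiting projective class (determined by the associated eigenvalue of $\hat{P}_\beta|_{V_s}$), the full sequence $[f^{(\ell)}_\e]$ converges, proving (i). The parallel statement applied to the scaled eigenvalues forces $\hat\lambda^{(\ell)}_\ast$ to equal the unique eigenvalue $\hat\lambda^{(\ell)}$ of $\hat{P}_\beta$ attached to $f^{(\ell)}$, yielding (ii).

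For the ``moreover'' statement I observe that $\hat{W}_s$, as a principal submatrix of the Laplacian-type $\dot{W}$, is symmetric with nonpositive diagonal, nonnegative off-diagonal entries, and remains weakly diagonally dominant:
\begin{align*}
|\hat{W}_{s,jj}| = |\dot{W}_{jj}| = \sum_{k\neq j}\dot{W}_{jk} \;\geq\; \sum_{k \in B_s,\; k\neq j}\hat{W}_{s,jk}.
\end{align*}
Gershgorin's theorem therefore confines every eigenvalue of $\hat{W}_s$ to $(-\infty,0]$. Since $\hat{P}_\beta|_{V_s} = e^{-2\pi i k\beta_s}\hat{W}_s$, any nonzero $\hat\lambda^{(\ell)}$ has the form $-|\mu|\,e^{-2\pi i k\beta_s}$ with $|\mu|>0$, which is exactly $\arg \hat\lambda^{(\ell)} = \arg \lambda^{(\ell)} + \pi$.

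The main obstacle I anticipate is the step ruling out coincident projective limits: since $P_{\e,\beta}$ is \emph{not} normal, the eigenvectors $f^{(\ell)}_\e$ need not be mutually orthogonal, so this non-collapse cannot be read off from standard spectral theory; it depends essentially on condition (3) of $L$-admissibility, which ensures that the limiting block $\hat{W}_s$ has simple eigenvalues and thus no two distinct eigenvector sequences can merge into the same limiting direction in $V_s$.
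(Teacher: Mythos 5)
Your overall skeleton matches the paper's: pass to subsequences, invoke Lemmas~\ref{lem:zero} and \ref{lem:46} to identify accumulation points as eigenvectors of $\hat{P}_\beta|_{V_s}$, use the simplicity of the eigenvalues of $\hat{W}_s$ from $L$-admissibility~(3), and conclude the ``moreover'' claim via Gershgorin. The Gershgorin step is correct. But the step you yourself flag as the ``main obstacle'' --- ruling out coincident projective limits --- is handled with an argument that does not work. In case~(2) of the proof of Theorem~\ref{thm:gamma}, the two vectors $f_{\e_n}$ and $g_{\e_n}$ are eigenvectors of $P_{\e_n,\beta}$ \emph{for the same eigenvalue} $\lambda_{\e_n}$, so the Gram--Schmidt orthogonalisation $h_{\e_n}$ is again an eigenvector for that eigenvalue, and Lemma~\ref{lem:46} then applies to show the limit $h_*$ is an eigenvector of $\hat{P}_\beta$ lying in $V_s$ with the \emph{same} limiting scaled eigenvalue as $f_*$. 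In your setting, $f_{\e_n}^{(\ell_1)}$ and $f_{\e_n}^{(\ell_2)}$ have \emph{distinct} eigenvalues (by Theorem~\ref{thm:gamma}), so $h_{\e_n}$ is not an eigenvector of $P_{\e_n,\beta}$, and Lemma~\ref{lem:46} is inapplicable. One then has no control over the limit $h_*$: since the two eigenvector sequences are assumed to become parallel at an unspecified rate, the $V_s$-component of $h_{\e_n}$ before normalisation tends to zero at an unknown speed, and after dividing by $\|g_{\e_n}\|$ the off-$V_s$ part may survive. Thus neither $h_*\in V_s$, nor $h_*$ being an eigenvector of $\hat{P}_\beta$, nor the eigenvalue matching that of $f_*$ can be concluded, and the desired contradiction with simplicity of $\hat{W}_s$ does not materialise.

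The paper closes this gap by a different mechanism: Lemma~\ref{lem:ld} shows $\langle f_\e^{(\ell)}, D_\beta\overline{f_\e^{(m)}}\rangle = 0$ for $\ell\neq m$ (a bilinear-form orthogonality that is available because $D_\beta W_\e$ is $D_\beta$-self-adjoint in the appropriate sense), and Lemma~\ref{cor:bas} upgrades this, using that the subsequential limits are eigenvectors of the \emph{real symmetric} block $\hat{W}_L$ and hence real up to a global phase, to the genuine asymptotic orthogonality $\lim_{\e\to 0}\langle f_\e^{(\ell)}, f_\e^{(m)}\rangle = 0$. This immediately forbids two limits from collapsing onto the same direction and produces the orthonormal eigenbasis in one stroke. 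A second, lesser, issue in your plan is the passage from subsequential to full convergence: you assert that every subsequence has a further subsequence with ``the same limiting projective class,'' but you have not pinned down a well-defined bijection between $B_s$ and the eigenbasis of $\hat{P}_\beta|_{V_s}$ independent of the chosen subsubsequence. The paper sidesteps this by proving~(ii) first, showing via a boundedness-plus-intermediate-value contradiction that $\e^{-1}(\lambda_\e^{(\ell)} - e^{-2\pi i k\beta_s})$ converges (not merely subsequentially) to some $\hat\lambda^{(\ell)}\in\sigma(\hat{P}_\beta)$, after which~(i) follows from one-dimensionality of the corresponding eigenspace.
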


\subsection{Proof of Theorem \ref{thm:3}}\label{sec:41T}

%\m{I am trying to think in a cleaner notation. The one that we have rn, is too messy}

The proof of Theorem \ref{thm:3} is lengthy, \rev{so we establish two preparatory lemmas before the proof}. 
For every $\e>0$ small enough we write $\{\displaystyle f^{(\ell)}_{\e}\}_{\ell=1}^{N}\subset \mathbb C^N$ as a unit-norm eigenbasis of $P_{\e,\beta}$, and by appealing to standard matrix perturbation theory \cite[Chapter Two \S{1}]{Kato} we obtain that for every $\ell\in\{1,\ldots,N\},$ the map
    $$ \e \in(0,\gamma_k) \mapsto \displaystyle f_\e^{(\ell)}\in \mathbb C^N$$
    is $\mathcal C^\infty$. We define $\lambda_{\e}^{(\ell)}$ as the eigenvalue associated to the eigenvector $\displaystyle f^{(\ell)}_{\e}$. Recall that
    $$\lambda_{\e}^{(\ell)}\to e^{-2\pi i k \beta_s}\ \text{if }\ell \in B_s\ \text{as }\e\to 0.$$
We mention that standard perturbation theory asserts \cite[Chapter Two \S{1}]{Kato} that the above map $\e \in (0,\gamma_k)\mapsto \lambda_{\e}^{(\ell)} \in\mathbb C$  is $\mathcal C^\infty$. Finally, for every $\ell\in \{1,\ldots, N\},$ $\Pi_\e^{(\ell)}: L^2(M)\to \mathrm{span}\{\displaystyle F_{\e}^{(\ell)}\}$ denotes the $L^2$-orthogonal projection onto the $\s\{\displaystyle F_\e^{(\ell)}\}$ where $\displaystyle F_{\e}(j,x):= f_{\e}^{(\ell)} e^{2\pi i k x}.$
% \item \m{In light} of Theorem \ref{thm:3} we denote $\hat{\lambda}^{(\ell)}$ as the limits 
% $$\lim_{\e\to 0} \frac{\lambda^{(\ell)} -e^{2\pi i k \beta_{\ell_j}} }{\e} = \hat{\lambda}^{(\ell)}.$$

\begin{lemma} Let $1\leq \ell\neq m\leq N$, then $\langle \displaystyle f^{(\ell)}_{\e}, D_\beta \overline{\displaystyle \displaystyle f^{(m)}_{\e}}\rangle=0$ for every $\ell\neq m$ and $\e>0$ small.\label{lem:ld} 
\end{lemma}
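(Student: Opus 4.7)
The plan is to compute $\langle f_\e^{(\ell)}, D_\beta \overline{f_\e^{(m)}}\rangle$ two different ways using the eigenvalue equations $P_{\e,\beta} f_\e^{(\ell)} = \lambda_\e^{(\ell)} f_\e^{(\ell)}$ and $P_{\e,\beta} f_\e^{(m)} = \lambda_\e^{(m)} f_\e^{(m)}$, exploiting (i) that $D_\beta$ is unitary since its diagonal entries have modulus one, (ii) that $\overline{D_\beta} = D_\beta^{-1}$, and (iii) that $W_\e = \mathrm{Id} + \e \dot W$ is real and symmetric, hence Hermitian. By Theorem \ref{thm:gamma}, for $\e>0$ sufficiently small the eigenvalues $\lambda_\e^{(\ell)}$ and $\lambda_\e^{(m)}$ are distinct, and this will be what forces the inner product to vanish.

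The first step is to use the eigenvalue equation for $f_\e^{(\ell)}$ to rewrite
\[
f_\e^{(\ell)} = \frac{1}{\lambda_\e^{(\ell)}} D_\beta W_\e f_\e^{(\ell)},
\]
and then exploit the unitarity of $D_\beta$ to cancel a factor of $D_\beta$, obtaining
\[
\langle f_\e^{(\ell)}, D_\beta \overline{f_\e^{(m)}}\rangle = \frac{1}{\lambda_\e^{(\ell)}} \langle D_\beta W_\e f_\e^{(\ell)}, D_\beta \overline{f_\e^{(m)}} \rangle = \frac{1}{\lambda_\e^{(\ell)}} \langle W_\e f_\e^{(\ell)}, \overline{f_\e^{(m)}}\rangle.
\]

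The second step is to take complex conjugates in the eigenvalue equation $D_\beta W_\e f_\e^{(m)} = \lambda_\e^{(m)} f_\e^{(m)}$; since $W_\e$ is real and $\overline{D_\beta} = D_\beta^{-1}$, this rearranges to
\[
W_\e \overline{f_\e^{(m)}} = \overline{\lambda_\e^{(m)}}\, D_\beta \overline{f_\e^{(m)}}.
\]
Substituting this into the previous display via the Hermitian property of $W_\e$ yields
\[
\langle f_\e^{(\ell)}, D_\beta \overline{f_\e^{(m)}}\rangle = \frac{1}{\lambda_\e^{(\ell)}} \langle f_\e^{(\ell)}, W_\e \overline{f_\e^{(m)}} \rangle = \frac{\lambda_\e^{(m)}}{\lambda_\e^{(\ell)}} \langle f_\e^{(\ell)}, D_\beta \overline{f_\e^{(m)}}\rangle.
\]

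The final step is to invoke Theorem \ref{thm:gamma}: for $0 < \e < \gamma_k$ the eigenvalues $\lambda_\e^{(\ell)}$ and $\lambda_\e^{(m)}$ are distinct, so $\lambda_\e^{(m)}/\lambda_\e^{(\ell)} \neq 1$, forcing $\langle f_\e^{(\ell)}, D_\beta \overline{f_\e^{(m)}}\rangle = 0$. There is no genuine obstacle here: this is essentially the classical argument for biorthogonality of eigenvectors of a matrix with respect to a ``skew'' pairing, adapted to the bilinear form $(v,w) \mapsto \langle v, D_\beta \overline w\rangle$, and parallels the computation already carried out in Lemma \ref{lem:degenerated}. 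The only thing to verify carefully is the conjugate-linearity convention in the second slot when pulling $\overline{\lambda_\e^{(m)}}$ out of the inner product, which produces the factor $\lambda_\e^{(m)}$ on the right-hand side.
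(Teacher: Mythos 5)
Your argument is correct and is essentially identical to the paper's own proof: both use the eigenvalue equation for $f_\e^{(\ell)}$ together with unitarity of $D_\beta$ and self-adjointness of $W_\e$, then take conjugates in the eigenvalue equation for $f_\e^{(m)}$ to extract the factor $\lambda_\e^{(m)}/\lambda_\e^{(\ell)}$, and finally invoke Theorem \ref{thm:gamma} to conclude. The only detail the paper makes explicit that you leave implicit is the nonvanishing of $\lambda_\e^{(\ell)}$ (from Lemma \ref{prop:xama}), which is needed to justify the division.
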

\begin{proof} 
From Lemma \ref{prop:xama} $ \lambda_{\e}^{(\ell)} \neq 0$ for every $\ell\in \{1,\ldots, N\}$ and $\e >0$ sufficiently small. Observe that 
\begin{align} \displaystyle  \langle f^{(\ell)}_{\e}, D_\beta \overline{\displaystyle \displaystyle f_\e^{(m)}}\rangle &= \frac{1}{\lambda_{\e}^{(\ell)}}\langle P_{\e,\beta} f^{(\ell)}_{\e}, D_\beta \overline{\displaystyle \displaystyle f^{(m)}_{\e}}\rangle = \frac{1}{\lambda_{\e}^{(\ell)}}\langle D_\beta W_\e f^{(\ell)}_{\e}, D_\beta \overline{\displaystyle \displaystyle f^{(m)}_{\e}}\rangle \nonumber\\ 
\displaystyle&= \frac{1}{\lambda_{\e}^{(\ell)}} \langle W_\e  f^{(\ell)}_{\e}, \overline{\displaystyle \displaystyle f^{(m)}_{\e}}\rangle= \frac{1}{\lambda_{\e}^{(\ell)}} \langle   f^{(\ell)}_{\e}, W_\e \overline{\displaystyle \displaystyle f^{(m)}_{\e}}\rangle.\label{eq:idk1} \end{align}

Since $ \displaystyle D_\beta W_\e \displaystyle f^{(m)}_{\e} = \lambda_{\e}^{(m)} \displaystyle \displaystyle f^{(m)}_{\e}$, it follows that $ W_\e \displaystyle f^{(m)}_{\e} = \lambda_{\e}^{(m)} D_{-\beta} \displaystyle f^{(m)}_{\e}$ and therefore $W_\e  \overline{\displaystyle \displaystyle f^{(m)}_{\e}} = \overline{\lambda_{\e}^{(m)}} D_\beta \overline{\displaystyle \displaystyle f^{(m)}_{\e}} $. 
Combining the last equality with \eqref{eq:idk1}, we obtain 
\begin{align*} \langle f^{(\ell)}_{\e}, D_\beta \overline{\displaystyle \displaystyle f^{(m)}_{\e}}\rangle &= \frac{1}{\lambda_{\e}^{(\ell)}} \langle   f^{(\ell)}_{\e}, W_\e \overline{\displaystyle \displaystyle f^{(m)}_{\e}}\rangle =  \frac{1}{\lambda_{\e}^{(\ell)}} \langle   f^{(\ell)}_{\e},  \overline{\lambda_{\e}^{(m)}} D_\beta \overline{\displaystyle \displaystyle f^{(m)}_{\e}}\rangle = \frac{\lambda_{\e}^{(m)}}{\lambda_{\e}^{(\ell)}}\langle f^{(\ell)}_{\e}, D_\beta \overline{\displaystyle \displaystyle f^{(m)}_{\e}}\rangle. \end{align*} 
Finally, from Theorem \ref{thm:gamma} we obtain that $\lambda_{\e}^{(m)} \neq \lambda_{\e}^{(\ell)}$ when $\e>0$ is small enough. Thus, $\displaystyle \langle f^{(\ell)}_{\e}, D_\beta \overline{\displaystyle \displaystyle f^{(m)}_{\e}}\rangle =0$.
\end{proof}

\begin{lemma}\label{cor:bas}
    For every $1\leq \ell\neq m\leq N$,  $\lim_{\e \to 0} \langle \displaystyle f^{(\ell)}_{\e},f^{(m)}_{\e} \rangle =0$.
\end{lemma}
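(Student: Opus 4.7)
The plan is to argue by subsequences and compactness. Given any sequence $\e_n \to 0$, I would use compactness of the unit sphere in $\C^N$ to extract a sub-subsequence (not relabelled) along which $f_{\e_n}^{(\ell)} \to f^{(\ell)}$ and $f_{\e_n}^{(m)} \to f^{(m)}$, with $\|f^{(\ell)}\| = \|f^{(m)}\| = 1$. Since $\e_n$ was arbitrary, it suffices to show that $\langle f^{(\ell)}, f^{(m)} \rangle = 0$ for every such pair of subsequential limits; this will force $\langle f_\e^{(\ell)}, f_\e^{(m)} \rangle \to 0$ as $\e \to 0$ by continuity of the inner product.

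First, I would locate the two limits in the invariant subspaces $V_s$. Let $s, s'$ be the unique indices with $\ell \in B_s$ and $m \in B_{s'}$. By Lemma \ref{lem:zero}, $f^{(\ell)} \in V_s$ and $f^{(m)} \in V_{s'}$. If $s \neq s'$, their supports are disjoint subsets of $\{1,\dots,N\}$ and the inner product vanishes immediately.

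In the case $s = s'$, Lemma \ref{lem:46} says that both $f^{(\ell)}$ and $f^{(m)}$ are eigenvectors of $\hat{P}_{k,\beta,L}$. The block structure of this matrix means that it acts on $V_s$ as $e^{-2\pi i k \beta_s}\hat{W}_s$, so the restrictions of $f^{(\ell)}$ and $f^{(m)}$ to $B_s$ are eigenvectors of the real symmetric matrix $\hat{W}_s$. If these two restrictions correspond to \emph{distinct} eigenvalues of $\hat{W}_s$, then by the orthogonality of eigenvectors of a Hermitian matrix one obtains $\langle f^{(\ell)}, f^{(m)} \rangle = 0$, as desired.

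The main obstacle — the only delicate part of the argument — is ruling out that the two restrictions correspond to the \emph{same} eigenvalue of $\hat{W}_s$. For this I would invoke Definition \ref{Ladmiss}(3), which guarantees that $\hat{W}_s$ has $L_s$ distinct real eigenvalues; hence each eigenspace is one-dimensional and spanned by a real unit vector $\tilde v \in V_s$. Then $f^{(\ell)} = c\tilde v$ and $f^{(m)} = c'\tilde v$ with $|c| = |c'| = 1$. Now Lemma \ref{lem:ld} gives $\langle f_\e^{(\ell)}, D_\beta \overline{f_\e^{(m)}}\rangle = 0$ for every small $\e>0$; passing to the limit and using that $D_\beta$ acts on $V_s$ as $e^{-2\pi i k \beta_s}\mathrm{Id}$ together with $\overline{\tilde v} = \tilde v$, the identity collapses to $e^{2\pi i k \beta_s}\, c c' \|\tilde v\|^2 = 0$, whose modulus is $1$, a contradiction. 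Thus this subcase cannot occur, completing the subsequential argument and hence the proof.
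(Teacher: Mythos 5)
Your proof is correct and rests on exactly the same three ingredients as the paper's (Lemma \ref{lem:zero} for localisation in $V_s$, Lemma \ref{lem:46} to identify the limits as eigenvectors of $\hat{P}_{k,\beta,L}$, and Lemma \ref{lem:ld} combined with the fact that eigenvectors of the real symmetric matrix $\hat{W}_s$ are real up to a unit complex scalar). The only structural difference is organisational: you split into three cases (disjoint blocks; same block with distinct $\hat{W}_s$-eigenvalues; same block with the same $\hat{W}_s$-eigenvalue) and invoke Lemma \ref{lem:ld} only in the last, potentially degenerate, subcase to derive a contradiction. The paper instead observes once that $\overline{f_*^{(m)}}=a f_*^{(m)}$ for some $|a|=1$, and then Lemma \ref{lem:ld} yields $\langle f_*^{(\ell)},f_*^{(m)}\rangle=0$ in a single uniform computation, with no case split. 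Both arguments are valid; the paper's is a little more compact, while yours makes explicit that the first two cases are handled by elementary support and Hermitian-orthogonality considerations, reserving the delicate Lemma \ref{lem:ld} argument for the one scenario that genuinely requires it.
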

\begin{proof} 
Let $s_\ell,s_m\in\{1,\ldots,S\}$ such that $\ell\in B_{s_\ell}$ and $m\in B_{s_m}$.
%From Lemma \ref{lem:46}, it follows that each accumulation point of $\displaystyle f^{(\ell)}_{\e}$ is an eigenvector of $\hat{P}_{k,\beta}$ (see \eqref{eq:Tb}). Moreover, observe that $f$ is an eigenvector of $\hat{P}_{k,\beta}$ if and only if $f$ is an eigenvector of $\hat{W}_{L}$ and $f\in V_s$ for some $s \in\{1,\ldots, S\}$. 

Consider a converging subsequence of $\displaystyle\{\langle f_{\e_n}^{(\ell)},f_{\e_n}^{(m)}\rangle \}_{n\in \mathbb N}\subset \{\langle f_{\e}^{(\ell)},f_{\e}^{(m)}\rangle\}_{\e>0}$. 
By passing through a subsequence, if necessary, there exist complex vectors $\displaystyle f_*^{(\ell)},f_*^{(m)}\in \C^N$
$$\displaystyle \lim_{n\to\infty}f_{\e_n}^{(\ell)} = f_*^{(\ell)} \ \text{and }\displaystyle \lim_{n\to\infty}f_{\e_n}^{(m)} = f_*^{(m)}. $$

From Lemma \ref{lem:46}, we have that $\displaystyle f_*^{(m)}$ and $\displaystyle f_*^{(\ell)}$ are eigenvectors of the matrix $\hat{P}_{\beta}$ (see \ref{eq:Tb}). Moreover, Lemma \ref{lem:zero} implies that $\displaystyle f_*^{(\ell)}\in V_{s_\ell}$ and $\displaystyle f_*^{(\ell)}\in V_{s_m}$. Therefore, $f_*^{(\ell)}$ and $f_*^{(m)}$ are eigenvectors of the real symmetric matrix $\hat{W}_{L}$. Hence, $\overline{\displaystyle f_*^{(m)}} = a f_*^{(m)}$ for some $a\in\mathbb C.$ Finally, using Lemma \ref{lem:ld} we obtain that for every $m\neq\ell$
\begin{align*}
0 &= \lim_{n\to\infty}  \frac{e^{-2\pi i k \beta_{s_m}}}{\overline{a}} \left\langle f_{\e_n}^{(\ell)}, \displaystyle D_\beta \overline{\displaystyle f_{\e_n}^{(m)}} \right\rangle =\frac{e^{-2\pi i k \beta_{s_m}}}{\overline{a}} \left\langle f_*^{(\ell)}, D_\beta \overline{\displaystyle f_*^{(m)}} \right\rangle \\
 &=\langle\displaystyle f_*^{(\ell)}, f_*^{(m)}\rangle = \displaystyle\lim_{n\to\infty} \langle f_{\e_n}^{(\ell)},  f_{\e_n}^{(m)} \rangle
\end{align*}
which finishes the proof of the lemma.

\end{proof}
With the above results established, we can finally prove Theorem \ref{thm:3}.

\begin{proof}[Proof of Theorem \ref{thm:3}]

We start by establishing \eqref{eq:lambda1staprox}, which implies the first part of $(ii)$. Let $\ell\in\{1,\ldots, N\}$, from Lemma \ref{prop:xama} then there exists $s\in\{1,\ldots,S\}$, such that $\lambda_{\e}^{(\ell)} \to e^{-2\pi i k \beta_s}.$ Define $\pi_s: \C^N \to V_s$ as the $\mathbb C^N$-orthogonal projection onto $V_s$ (cf. \eqref{eq:Vi}). 
Observe that
\begin{align}
\hat{P}_\beta f^{(\ell)}_{\e}  &= D_\beta\hat{W}_{L}\displaystyle f^{(\ell)}_{\e} = D_\beta\hat{W}_{L} \pi_s f_\e^{(\ell)}  + D_\beta\hat{W}_{L}( \displaystyle f^{(\ell)}_{\e} - \pi_s f_\e^{(\ell)} )\nonumber\\
&= \frac{1}{\e}\left(\lambda_{\e}^{(\ell)} - e^{-2 \pi i k \beta_s} \right) \pi_s f_\e^{(\ell)}   + \mathcal O\left(\max_{j\in \{1,\ldots,N\}\setminus B_s} |\displaystyle f^{(\ell)}_{\e}(j)| \right),\label{eq:lim}
\end{align}
\rev{where, for a given sequence $a_\e\to0$, the notation $\mathcal O(a_\e)$ denotes a quantity such that $\limsup_{\e\to0}|\mathcal O(a_\e)|/a_\e<\infty.$}

Assume for a contradiction that the sequence 
\begin{align}
    \left\{\frac{1}{\e}\left(\lambda_{\e}^{(\ell)} - e^{-2 \pi i k \beta_s} \right)\right\}_{\e>0}\label{eq:le}
\end{align} does not converge as $\e\to 0$. From Lemma \ref{prop:xama}, \eqref{eq:le} is bounded for $\e>0$ small enough. Since \eqref{eq:le} is bounded and we are assuming that the limit of such a sequence does not exist, there exist  sequences of real numbers $\{\e_{n}\}_{n\in\mathbb N}$ and $\{\e_n'\}_{n\in\mathbb N}$ and complex numbers  $ \lambda_{(1)}\neq \lambda_{(2)}\in\mathbb C$  such that:
\begin{enumerate}
    \item $\e_{n},\e'_{n}\xrightarrow[]{n\to \infty} 0$;
    \item $\e_{n} \leq  \e_{n}' \leq  \e_{n+1} \ \text{for every }n\in\mathbb N$; and
    \item $\frac{1}{\e_n}\left(\lambda_{\e_n}^{(\ell)} - e^{-2 \pi i k \beta_s} \right)\xrightarrow[]{n\to \infty} \lambda_{(1)}$ and $\frac{1}{\e_n'}\left(\lambda_{\e_n'}^\ell - e^{-2 \pi i k \beta_s} \right)\to \lambda_{(2)}.$
\end{enumerate}

Using that $\e \in (0,\gamma_k) \mapsto \lambda_{\e}^{(\ell)}\in \mathbb C$ is continuous we can construct a sequence of real numbers $\{\e_r''\}_{r''\in\mathbb N}$ such that
\begin{enumerate}
    \item $\e_{n}\leq \e''_{n} \leq \e'_{n}$ for every $n\in \mathbb N$; 
    \item $\frac{1}{\e_n''}\left(\lambda_{\e_n''}^\ell - e^{-2 \pi i k \beta_s} \right)\xrightarrow[]{n\to \infty} \lambda_{(3)} \not\in \sigma(D_\beta \hat{W}_{L})$; and
    \item $f^{(\ell)}_{\e_n''} \to f^*\in V_s$ with $\|f^*\|_2 = 1$.
\end{enumerate}

Taking the limit $\e_r''\to 0$ in \eqref{eq:lim} we obtain that 
\begin{align*}
    D_\beta\hat{W}_{L}f^* &= \hat{P}_{\beta} f^* = \lim_{\e_r''\to 0} \hat{P}_\beta f^{(\ell)}_{\e_r''} \\
    &= \lim_{\e_r''\to 0}\left[\frac{1}{\e_r''}\left(\lambda_{\e_r''}^{(\ell)} - e^{-2 \pi i k \beta_s} \right) \pi_s f_{\e_r''}^{(\ell)}   + \mathcal O\left(\max_{j\in \{1,\ldots,N\}\setminus B_s} |\displaystyle f^{(\ell)}_{\e_r''}(j)|\right)\right]\\
    &=   \lambda_{(3)} f^* + 0 =  \lambda_{(3)} f^*.
\end{align*} 
Since $\lambda_{(3)} \not \in \sigma (D_\beta \hat{W}_{L})$ this yields a contradiction, therefore
$\left\{\frac{1}{\e}\left(\lambda_{\e}^{(\ell)} - e^{-2 \pi i k \beta_s} \right)\right\}_{\e>0}$
must converge to an eigenvalue $\hat{\lambda}^{(\ell)}$ of $D_\beta \hat{W}_{L}.$ We thus conclude that $\eqref{eq:lambda1staprox}$ holds.

In the following we show $(i)$. Recall that from the assumption imposed on $\hat{W}_L$ (see Definition \ref{Ladmiss} (3)) that $\hat{P}_\beta    = D_\beta \hat{W}_L$ admits $L_s=\#B_s$ distinct eigenvalues lying in $V_s$ and that $\ell \in B_s$. From \eqref{eq:lim} and Lemma \ref{lem:zero} it is clear that all the accumulation points of $\displaystyle\{\displaystyle f^{(\ell)}_{\e}\}_{\e>0}$ lie in the one-dimensional complex vector space $\mathrm{ker}(  \hat{P}_{\beta}  - \hat{\lambda}^{(\ell)})\cap V_s$. Choosing $f^{(\ell)} \in V_s\setminus \{0\}$ such that $\hat{P}_{\beta}f^{(\ell)} = \lambda^{(\ell)} f^{(\ell)}$ we have that  $[\displaystyle f^{(\ell)}_{\e}] \xrightarrow[]{\e \to 0} [f^{(\ell)}]$ in $\mathbb C \mathbb P^{N-1}$. Since  $\hat{P}_{\beta}$ has $\#B_s$ different eigenvectors lying in $V_s$, using Lemma \ref{cor:bas} we obtain that $\{\displaystyle f^{(\ell')}\}_{\ell'\in B_s}$ is an orthonormal basis of $V_s$. Observing that $\C^{N} = V_1 \oplus \ldots \oplus V_S$ and repeating the same argumentation presented in this paragraph for every for every $s'\in \{1,\ldots, S\}$ we obtain that $\{f^{(1)},\ldots, f^{(N)} \}$ is an orthonormal basis of $\mathbb C^{N}$, which concludes the proof of $(i)$.

Finally, we show the second part of $(ii)$, i.e. we show that $\mathrm{arg}(\hat{\lambda}^{(\ell)}) = \mathrm{arg}(\lambda^{(\ell)}) +\pi$ for every $\ell \in\{1,\ldots,N\}$ if $\lambda^{(\ell)}\neq 0$. 
%Given $\ell \in\{1,\ldots,N\}$, $\lambda^{(\ell)}_k$ is an eigenvalue of the matrix $\hat{P}_{k,\beta,L} = D_{k,\beta} \hat{W}_{L}$. 
Since $\hat{P}_{\beta}=D_{\beta} \hat{W}_{L}$ is a block matrix, there exists $s\in\{1,\ldots, S\}$ such that $\lambda^{(\ell)}$ is also an eigenvalue of the $L_s \times L_s$ matrix $e^{-2\pi i k \beta_s} \hat{W}_{s}$. In this way, $\lambda^{(\ell)} = e^{-2\pi i k \beta_s} \rho_s$ where $\rho_s\in \R$ is an eigenvalue of the real symmetric matrix $\hat{W}_s$. From Definition \ref{Ladmiss} (1), all the diagonal entries of $\hat{W}_s$ are negative real numbers and the sum of each row is non-positive. As a consequence of the Gershgorin circle theorem \cite[Theorem 6.11]{MAnalysis}, we obtain that all eigenvalues of $\hat{W}_s$ are non-positive, i.e. $\rho_s\leq 0$. Hence, the complex argument of $\hat{\lambda}^{(\ell)} = e^{-2\pi i k \beta_s} \rho_s$ is equal to $-2\pi k \beta_s + \pi\ (\text{mod }2\pi)$, as long as $\hat{\lambda}^{(\ell)}\neq 0.$  
\end{proof}

%\gf{If I understand correctly this sections proves all items in the section 5 title above.  But Theorem \ref{thm:fhat} seems to fall short of all of these items?} 

The following theorem discusses the linear response of the eigenprojections $\Pi_{k,\e}^{(\ell)}$.

\begin{theorem}\label{thm:fhat}

Let $\ell\in\{1,\ldots,N\}$, $s_\ell\in\{1,\ldots, S\}$ such that $\ell \in B_{s_\ell}$ and $\beta \in \Gamma$. For each $\e>0$ small enough, consider $\{\displaystyle f^{(1)}_{\e},\ldots, f^{(N)}_{\e}\}$ and $\{f^{(1)},\ldots, f^{(N)}\}$ as in Theorem \ref{thm:3}. Then, 

\begin{enumerate}
    \item[$(i)$] there exist vectors $\hat{f}^{(1)},\ldots,\hat f^{(N)}\in\mathbb{C}^N,$ so that $$\left[f_\e^{(\ell)}\right] = \left[f^{(\ell)} + \e \hat{f}^{(\ell)} + \smallO(\e)\right]\ \text{for every }\ell \in\{1,\ldots,N\},$$ 
where
\begin{align}
 \hat{f}^{(\ell)}=& \sum_{s=1, s\neq s_\ell}^{S} \left(\sum_{r \in B_{s_\ell}\setminus\{ \ell\} }    \frac{1}{\hat\lambda^{(\ell)} - \hat\lambda^{(r)}}\frac{1}{ e^{-2\pi i k \beta_{s} }- e^{-2\pi i k \beta_{s_\ell}} } \left\langle 
 \pi_s D_\beta \dot{W} f^{(\ell)} ,\pi_s  \dot{W} D_{-\beta} f^{(r)}  \right\rangle f^{(r)} \right.\nonumber\\
& +  %\sum_{s=1, s\neq s_\ell}^{S} 
\left.\sum_{r \in B_s}\frac{1}{e^{-2\pi i k \beta_s} - e^{-2\pi i k \beta_{s_\ell} }} \left\langle  D_\beta \dot{W} f^{(\ell)} , f^{(r)}   \right\rangle f^{(r)}\right) .\label{eq:fhatl}
\end{align}
and  $\pi_s:\C^N\to V_s$ is the $\mathbb C^N$-orthogonal projection onto $V_s$.
\item[$(ii)$]Moreover,
\begin{align*}
    \Pi_{\e}^{(\ell)} =  \Pi^{(\ell)} + \e   \left( \left\langle \cdot, F^{(\ell)}\right\rangle \hat{F}^{(\ell)} +  \left\langle \cdot, \hat{F}^{(\ell)} \right\rangle F^{(\ell)} \right) + \smallO(\e),%\label{eq:Pi}
\end{align*}
where $F^{(\ell)}(j,x) = f^{(\ell)}(j)e^{2\pi i k x}$ and $\hat{F}^{\ell}(j,x):= \hat{f}^{\ell}(j)e^{2\pi i k x}.$
\item[$(iii)$] Finally, $\langle F_k^{(\ell)}, \hat{F}_k^{(\ell)}\rangle = 0$ and the term $\smallO(\e):L^2(M)\to L^2(M)$ in $\eqref{evecresponse}$ is a finite-rank operator such that $\|\smallO(\e)\|/\e \xrightarrow[]{\e\to 0}0.$
\end{enumerate}

\end{theorem}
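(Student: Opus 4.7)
The plan is to perform a perturbative expansion of the eigenequation $P_{\e,\beta} f_\e^{(\ell)} = \lambda_\e^{(\ell)} f_\e^{(\ell)}$ in powers of $\e$, working in the orthogonal band decomposition $\mathbb{C}^N = \bigoplus_{s=1}^S V_s$ given by the projections $\pi_s$.

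First I would invoke Kato's analytic perturbation theory, which is applicable here because the first-order corrections $\{\hat\lambda^{(r)}\}_{r \in B_{s_\ell}}$ are simple (Definition \ref{Ladmiss}(3) applied to $\hat W_{s_\ell}$, together with Theorem \ref{thm:3}). This lets me select an analytic representative $\tilde f_\e^{(\ell)} \in [f_\e^{(\ell)}]$ of the eigenline with $\tilde f_0^{(\ell)} = f^{(\ell)}$. Normalising so that $\langle \tilde f_\e^{(\ell)} - f^{(\ell)}, f^{(\ell)} \rangle = O(\e^2)$ gives a Taylor expansion $\tilde f_\e^{(\ell)} = f^{(\ell)} + \e \hat f^{(\ell)} + \e^2 \ddot f^{(\ell)} + O(\e^3)$ with $\langle \hat f^{(\ell)}, f^{(\ell)}\rangle = 0$. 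Substituting this expansion together with $\lambda_\e^{(\ell)} = e^{-2\pi i k \beta_{s_\ell}} + \e \hat\lambda^{(\ell)} + \e^2 \ddot\lambda^{(\ell)} + o(\e^2)$ into the eigenequation and matching in $\e$ produces a tower of equations; orders $\e^0$ and $\e^1$ reproduce the conclusions of Theorem \ref{thm:3}. Projecting the order-$\e^1$ equation onto $V_s$ for $s \neq s_\ell$ yields the algebraic identity
\begin{equation*}
(e^{-2\pi i k \beta_s} - e^{-2\pi i k \beta_{s_\ell}}) \, \pi_s \hat f^{(\ell)} = - \pi_s D_\beta \dot W f^{(\ell)},
\end{equation*}
which is invertible precisely because $\beta \in \Gamma$; expanding the unique solution in the orthonormal basis $\{f^{(r)}\}_{r \in B_s}$ of $V_s$ produces the second sum in the stated formula for $\hat f^{(\ell)}$.

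The main obstacle is the component $\pi_{s_\ell} \hat f^{(\ell)}$, since the order-$\e^1$ equation restricted to $V_{s_\ell}$ is automatically satisfied by $\hat P_\beta f^{(\ell)} = \hat\lambda^{(\ell)} f^{(\ell)}$ and therefore yields no information about it. To determine this component I would move to order $\e^2$ and apply $\pi_{s_\ell}$, which yields the non-trivial equation
\begin{equation*}
(\hat P_\beta - \hat\lambda^{(\ell)}) \pi_{s_\ell} \hat f^{(\ell)} = \ddot\lambda^{(\ell)} f^{(\ell)} - e^{-2\pi i k \beta_{s_\ell}} \sum_{s \neq s_\ell} \pi_{s_\ell} \dot W (\pi_s \hat f^{(\ell)}).
\end{equation*}
On $V_{s_\ell}$, $\hat P_\beta$ equals the unit-modulus scalar $e^{-2\pi i k \beta_{s_\ell}}$ times the real symmetric matrix $\hat W_{s_\ell}$, so it is diagonalised by $\{f^{(r)}\}_{r\in B_{s_\ell}}$ and $f^{(\ell)}$ is also a left eigenvector of $\hat P_\beta|_{V_{s_\ell}}$ with eigenvalue $\overline{\hat\lambda^{(\ell)}}$; this algebraic coincidence is the ``cancellation mechanism specific to our framework'' alluded to before Theorem \ref{thm:2}. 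Pairing the displayed equation with $f^{(\ell)}$ enforces the Fredholm solvability condition, which determines $\ddot\lambda^{(\ell)}$ (recovering the $\doublehat\lambda_k^{(\ell)}$ of Theorem \ref{thm:hlambda}); pairing with $f^{(r)}$, $r \in B_{s_\ell} \setminus \{\ell\}$, isolates the individual coefficients $\langle \pi_{s_\ell} \hat f^{(\ell)}, f^{(r)}\rangle$ after substituting the already-computed formula for $\pi_s \hat f^{(\ell)}$. Assembling the two pieces reproduces the full expression \eqref{eq:fhatl}, completing part (i).

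Part (iii) is essentially immediate from the construction: the orthogonality $\langle F^{(\ell)}, \hat F^{(\ell)}\rangle = 0$ follows from $\langle \hat f^{(\ell)}, f^{(\ell)}\rangle = 0$ because $F^{(\ell)}$ and $\hat F^{(\ell)}$ share the same $x$-factor $e^{2\pi i k x}$. For part (ii), I would write $\Pi_\e^{(\ell)}$ as the rank-one orthogonal projection $\|F_\e^{(\ell)}\|^{-2}\langle\cdot, F_\e^{(\ell)}\rangle F_\e^{(\ell)}$; the orthogonality just noted forces $\|F_\e^{(\ell)}\|^2 = \|F^{(\ell)}\|^2 + O(\e^2)$, so a direct Taylor expansion of the rank-one projection in $\e$ gives the asserted first-order formula. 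The remainder is a sum of finitely many rank-one operators on $L^2(M)$, hence is of finite rank, and its operator norm is dominated by its finitely many $(j,k)$-Fourier-band scalar coefficients, all of which are $o(\e)$ by the analytic expansion, verifying the finite-rank $o(\e)$ statement.
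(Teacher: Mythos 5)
Your approach is correct and takes a genuinely different route from the paper's. The paper establishes only $\mathcal{C}^\infty$ smoothness of $\e\mapsto f_\e^{(\ell)}$ on $(0,\gamma)$, where the eigenvalues are already simple, and then hand-crafts the $\e\to 0$ limit of the derivative $\dot f_\e^{(\ell)}$: it decomposes $\dot f_\e^{(\ell)}$ along a basis built from the dual eigenvectors $\mu_\e^{(r)}$ (Propositions~\ref{prop:Ae} and~\ref{pro:InnerProdfe}), computes each coefficient's limit by compensating the $O(\e)$ denominator $\lambda_\e^{(\ell)}-\lambda_\e^{(r)}$ with an $O(\e)$ numerator produced by the cancellation identity \eqref{eq:key}, and closes the argument via a mean-value argument in $\mathbb{C}\mathbb{P}^{N-1}$. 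You instead invoke Kato's degenerate-eigenvalue reduction theorem to assert analyticity at $\e=0$ directly, justified by the observation that $e^{-2\pi i k \beta_{s_\ell}}$ is a semisimple eigenvalue of $D_\beta$ of multiplicity $L_{s_\ell}$ (this is exactly where $\beta\in\Gamma$ enters) and that the reduced operator $e^{-2\pi i k \beta_{s_\ell}}\hat W_{s_\ell}$ has distinct eigenvalues by Definition~\ref{Ladmiss}(3); you then match Taylor coefficients, extracting the off-band components of $\hat f^{(\ell)}$ from the order-$\e$ equation and the in-band components from the order-$\e^2$ equation via the Fredholm solvability condition. The underlying obstruction is the same in both accounts --- the order-$\e$ equation restricted to $V_{s_\ell}$ merely reproduces $\hat P_\beta f^{(\ell)}=\hat\lambda^{(\ell)}f^{(\ell)}$ and gives no information on $\pi_{s_\ell}\hat f^{(\ell)}$ --- and the algebraic fact you single out (that $f^{(\ell)}$ is simultaneously a right and a conjugate-left eigenvector of $\hat P_\beta|_{V_{s_\ell}}$ because $\hat W_{s_\ell}$ is real symmetric) is the same structural input the paper exploits. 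Your route is shorter and more systematic, at the cost of citing a non-trivial perturbation result; the paper instead proves the required convergence from scratch.

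A small accounting note: solving your order-$\e$ identity gives $\pi_s\hat f^{(\ell)}=\left(e^{-2\pi ik\beta_{s_\ell}}-e^{-2\pi ik\beta_s}\right)^{-1}\pi_s D_\beta\dot W f^{(\ell)}$, which agrees with Proposition~\ref{prop:linresponselast} but carries the opposite sign in the denominator from the second sum printed in \eqref{eq:fhatl} and from Proposition~\ref{pro:InnerProdfe}(a); the discrepancy traces to a sign dropped when the $\lambda_\e^{(\ell)}f_\e^{(\ell)}$ term is removed from the second to the third member of \eqref{eq:key} using \eqref{eq:mu2}. Your computation is the correct one, so you should not claim it reproduces the printed formula verbatim, but rather that it corrects its sign.
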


% \rev{Indeed, Theorem \ref{thm:fhat}~(i) writes the eigenvector response in terms of $\dot W$ applied to the localised eigenvectors. If the entries of $\dot W$ are supported closed to the diagonal, then applying $\dot W$ can only move mass to nearby partitions.}
% \gf{Matheus, eqn (28) above is a sum over various $f^{(r)}$.
% The outer sum is over bands other than $s_\ell$, the first inner sum is over fibres within the $s_\ell$th band other than fibre $\ell$, and the second inner sum is over all fibres in the other bands. Thus, appealing to Theorem 2.8 (2b). it seems to me that an argument to show that $\hat f^{(\ell)}$ is localised relies on the coefficients for all $f^{(r)}$ in the first inner sum being much larger (and not cancelling one another) than the coefficients in the second inner sum. Can you please mount a slightly elaborated argument why this should be the case?  If I am missing a simpler argument, by all means, please use the simpler one.}

\subsection{Proof of Theorem \ref{thm:fhat}}
To prove the above theorem, we first establish four preliminary results.

\begin{proposition}\label{prop:nicefam}

Let $\{f^{(1)},\ldots, f^{(N)}\} \subset \mathbb{C}^N$ be the orthogonal eigenbasis from Theorem~\ref{thm:3}. Then, there exists $\gamma > 0$ and a family  
$\{ f_\e^{(1)},\ldots,f_\e^{(N)} \}_{0<\e<\gamma} \subset \mathbb{C}^N$  
of unit-norm eigenbases of $P_{\e,\beta}$ such that, for every $\ell \in \{1,\ldots, N\}$
 % Let $\{f^{(1)},\ldots, f^{(N)}\}\subset \mathbb C^N$ be a orthogonal eigenbasis given in Theorem \ref{thm:3}. Then, there $\gamma>0$ and  a family $\{\displaystyle f_\e^{(1)},\ldots,f_\e^{(N)}\}_{{0<\e<\gamma}}\subset \C^N$ of unit-norm eigenbasis of $\mathcal P_{\e,\beta}$ such that for every $\ell \in \{1,\ldots, N\},$
\begin{enumerate}
    \item[(i)] the map $\e \in (0,\gamma)\mapsto\displaystyle  f_\e^{(\ell)}\in \C^N$ is $\mathcal C^{\infty}$; and
    \item[(ii)] $\displaystyle f_\e^{(\ell)}\xrightarrow[]{\e\to 0} f^{(\ell)}$ in $\mathbb C^N$.
\end{enumerate}
\end{proposition}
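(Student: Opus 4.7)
The plan is to take an arbitrary $C^\infty$ family of unit-norm eigenvectors of $P_{\e,\beta}$ and correct each member by an $\e$-dependent phase factor so that the limit as $\e\to 0$ is exactly $f^{(\ell)}$ rather than merely a unit multiple thereof. First, I would invoke standard matrix perturbation theory (\cite[Chapter II §1]{Kato}) to produce on some interval $(0,\gamma_k)$ a $C^\infty$ family $\{\tilde f_\e^{(\ell)}\}_{\ell=1}^N$ of unit-norm eigenbases of $P_{\e,\beta}$; this is legitimate because Theorem~\ref{thm:gamma} guarantees that $P_{\e,\beta}$ has $N$ simple eigenvalues throughout this interval. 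The labelling is fixed so that $\tilde f_\e^{(\ell)}$ is associated with $\lambda_\e^{(\ell)}$ (and hence with the cluster $B_{s_\ell}$).

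Second, I would exploit Theorem~\ref{thm:3}(i), which gives $[\tilde f_\e^{(\ell)}]\to[f^{(\ell)}]$ in $\C\mathbb P^{N-1}$ as $\e\to 0$. Since both vectors are unit-norm, this projective convergence forces $|\langle \tilde f_\e^{(\ell)},f^{(\ell)}\rangle|\to 1$; shrinking $\gamma_k$ to some $\gamma>0$ if necessary, I may assume $|\langle \tilde f_\e^{(\ell)},f^{(\ell)}\rangle|\geq 1/2$ for all $\e\in(0,\gamma)$ and all $\ell\in\{1,\ldots,N\}$. I would then define
\begin{equation*}
\phi_\e^{(\ell)} := \frac{\overline{\langle \tilde f_\e^{(\ell)},f^{(\ell)}\rangle}}{|\langle \tilde f_\e^{(\ell)},f^{(\ell)}\rangle|}\in\mathbb S^1,
\end{equation*}
and set $f_\e^{(\ell)} := \phi_\e^{(\ell)}\tilde f_\e^{(\ell)}$. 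Since $\phi_\e^{(\ell)}$ is a smooth scalar function of a nonvanishing smooth complex-valued input, $\e\mapsto \phi_\e^{(\ell)}$ is $C^\infty$ on $(0,\gamma)$, and $f_\e^{(\ell)}$ remains a unit-norm eigenvector of $P_{\e,\beta}$ for the eigenvalue $\lambda_\e^{(\ell)}$, giving (i). For (ii), by construction $\langle f_\e^{(\ell)},f^{(\ell)}\rangle = |\langle \tilde f_\e^{(\ell)},f^{(\ell)}\rangle|$ is a non-negative real number tending to $1$, hence
\begin{equation*}
\|f_\e^{(\ell)}-f^{(\ell)}\|^2 = 2\bigl(1-\mathrm{Re}\langle f_\e^{(\ell)},f^{(\ell)}\rangle\bigr) \xrightarrow[\e\to 0]{} 0.
\end{equation*}

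I do not foresee any serious obstacle, since all the substantial analytic work — the existence of $N$ simple eigenvalues, the existence of a smooth eigenvector family, and the projective convergence to a preferred limit basis — has already been carried out in Theorems~\ref{thm:gamma} and~\ref{thm:3}. What remains is the purely algebraic phase-alignment step above. The only subtle point is verifying that $\phi_\e^{(\ell)}$ is smooth in $\e$, which follows precisely because the denominator $|\langle \tilde f_\e^{(\ell)},f^{(\ell)}\rangle|$ stays bounded away from $0$ on $(0,\gamma)$, a consequence of the projective convergence combined with the unit-norm normalisation of both vectors.
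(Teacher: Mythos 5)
Your proposal is correct and follows essentially the same route as the paper: invoke perturbation theory (via Theorem~\ref{thm:gamma}) for a smooth family of unit-norm eigenvectors, observe that projective convergence from Theorem~\ref{thm:3} forces $|\langle \tilde f_\e^{(\ell)}, f^{(\ell)}\rangle|\to 1$, and rescale by the unimodular factor $\langle f^{(\ell)}, \tilde f_\e^{(\ell)}\rangle / |\langle f^{(\ell)}, \tilde f_\e^{(\ell)}\rangle|$ (which agrees with your $\phi_\e^{(\ell)}$ since $\overline{\langle \tilde f_\e^{(\ell)}, f^{(\ell)}\rangle}=\langle f^{(\ell)},\tilde f_\e^{(\ell)}\rangle$). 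You are slightly more explicit than the paper on the smoothness of the phase factor and on the final norm computation, but the argument is the same.
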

\begin{proof}
Fix a natural number $\ell \in \{1,\ldots, N\}$. As established at the beginning of Section \ref{sec:41T}, by means of perturbation theory, there exists a family $\{\displaystyle g_\e^{(1)},\ldots, g_\e^{(N)}\}_{{0<\e<\gamma}}\subset \C^N$ of unit-norm eigenbasis of $\mathcal P_{\e,\beta}$ such that $(i)$ is satisfied.

From Theorem \ref{thm:3}, it follows that $[\displaystyle g_\e^{(\ell)}] \to [f^{(\ell)}]$ as $\e\to 0$. Since $\|f^{(\ell)}\|=1$ and $\|g_\e^{(\ell)}\|=1$, we obtain that  $|\langle f^{(\ell)} , g_\e^{(\ell)} \rangle | \to 1$ as $\e\to 0.$
Shrinking $\gamma>0$, if necessary, we can assume that $1/2 < \|\langle f^{(\ell)} , g_\e \rangle \|$ for $0<\e<\gamma$.

Define
$$f_{\e}^{(\ell)} := \frac{\langle f^{(\ell)} , g_\e \rangle}{|\langle f^{(\ell)} , g_\e \rangle|} g_\e\ \text{for every }0<\e<\gamma.$$ 
From a direct computation, we obtain that
$$\lim_{\e\to 0 } \langle f_\e^{(\ell)}, f^{(\ell)} \rangle = \lim_{\e\to 0 } \left\langle \frac{\langle f^{(\ell)} , g_\e \rangle}{|\langle f^{(\ell)} , g_\e \rangle|} g_\e ,f^{(\ell)}\right\rangle = \lim_{\e\to 0 } \frac{\langle f^{(\ell)} , g_\e \rangle \left\langle f_\e , g_\e \right\rangle}{|\langle f^{(\ell)} , g_\e \rangle|} = \lim_{\e\to 0 } |\langle g_
\e, f^{(\ell)} \rangle| =1.$$
This implies that $\displaystyle f_\e^{(\ell)} \to f^{(\ell)}$ as $\e \to 0$, since $\displaystyle f_\e^{(\ell)}$ is a unit-norm vector for every $0<\e<\gamma$.

% For each $\e \in (0,\gamma_k)$ choose $e^{i\theta_\e} \in\mathbb S^1$ such that 
% $$ \|g^{(\ell)} -e^{i \theta_\e }g_\e^{(\ell)} \| = \min_{\theta \in [0,2\pi)} \|g^{(\ell)} -e^{i \theta}g_\e^{(\ell)}\|. $$

% Consider $\delta>0$ such that 
% $$ |g^{(\ell)} -e^{i \theta_\e }g_\e^{(\ell)}|<  $$

\end{proof}

% In the following, we fix some notation. From now on, we fix the orthonormal basis $\{f^{(1)},\ldots,f^{(N)}\}\subset C^N$ and a family $\{f_\e^{(1)},\ldots,f_\e^{(N)}\}_{0<\e<\gamma}$ given by Proposition \ref{prop:nicefam}.

In what follows, we establish some notation. We fix the orthonormal basis 
$\{f^{(1)},\ldots,f^{(N)}\}$ $\subset \C^N$ given in Theorem \ref{thm:3} and a family 
$\{\displaystyle f_\e^{(1)},\ldots,f_\e^{(N)}\}_{0<\e<\gamma}\in \C^N$ of  unit-norm eigenbases of $P_{\e,\beta}$ provided by Proposition~\ref{prop:nicefam}. In particular, we obtain that $f_
\e \xrightarrow[]{\e\to 0} f^{(\ell)}$ for every $\ell\in \{1,\ldots, N\}$.

For each \( \ell \in \{1, \dots, N\} \) and sufficiently small \( \e \), we define \( \mu^{(\ell)}_\e \) as the eigenvectors of the Hermitian conjugate $(P_{\e,\beta})^*$, which satisfy $(P_{\e,\beta})^* \mu^{(\ell)}_\e = \overline{\lambda^{(\ell)}_\e} \mu^{(\ell)}_\e.$ Furthermore, we impose the normalization condition  
\begin{align}
    \langle f_\mu^{(\ell)}, \mu^{(\ell)}_\e \rangle = 1 \ \text{for all}\ \ell \in \{1, \dots, N\}\ \text{and}\ \e > 0.\label{eq:mu1}
\end{align}

By Theorem \ref{thm:gamma} for $\e>0$ small enough, $P_{\e,\beta}$ admits $N$ distinct (and therefore simple) eigenvalues, and by appealing to standard matrix perturbation theory, the derivatives \({\displaystyle\dot{f}^{(\ell)}_{\e}}  = \frac{\partial}{\partial \e} \displaystyle f^{(\ell)}_{\e}\) exist for $\e>0$. The strategy of the proof of Theorem \ref{thm:fhat} is to understand the behaviour of  $\displaystyle \dot{f}^{(\ell)}_{\e}\ \text{as}\ \e\to 0.$ To achieve this, for every $\e>0$ small enough we write $\dot{f}_\e^{(\ell)}$ in the basis
$$\{f^{(1)}_{\e},\ldots,f_{\e}^{(\ell-1)}, \mu_\e^{(\ell)},f_{\e}^{(\ell+1)} \ldots, f^{(N)}_{\e}\},$$
i.e.
\begin{align}
    \displaystyle\df{\e}{\ell} = \alpha_{\e}^\ell \mu_\e^{(\ell)}+ \sum_{m=1,i\neq \ell}^{N} \alpha_\e^{m} \displaystyle \f{\e}{m},\label{eq:alpha}
\end{align}
and we study the behaviour of the complex numbers $\alpha_\e^{(1)},\ldots,\alpha_\e^{(N)}$ as $\e\to 0.$

Observe that by repeating the proof of Theorem \ref{thm:3}, but considering the matrix $(P_{\e,\beta})^* = W_\e D_{-\beta}$ instead of $P_{\e,\beta}$, and noting that the matrices $\hat{W}_{L} D_{-\beta}$ and $D_\beta \hat{W}_{L}$ have the same eigenvectors, we obtain that 
\begin{align}
   \displaystyle\mu^{(\ell)}_\e \xrightarrow[]{\e \to 0} f^{(\ell)}.\label{eq:mu3} 
\end{align} Moreover, it is worth noting that  
\begin{align}
\langle f^{(m)}_{\e},\mu_\e^{(\ell)} \rangle =0 \quad \text{if } m\neq \ell.\label{eq:mu2}    
\end{align}

Indeed,
$$\langle f^{(m)}_{\e}, \mu_\e^{(\ell)} \rangle = \frac{1}{\lambda^{(m)}_\e} \langle P_{\e,\beta} f^{(m)}_{\e}, \mu_\e^{(\ell)} \rangle =  \frac{1}{\lambda^{(m)}_{\e}} \langle  f^{(m)}_{\e} , (P_{\e,\beta})^* \mu_\e^{(\ell)} \rangle  = \frac{\lambda^{(\ell)}_{\e}}{\lambda^{(m)}_{\e}} \langle f_\e^{(m)}, \mu_\e^{(\ell)} \rangle.$$
Since $\lambda_\e^{(\ell)} \neq\lambda_{\e}^{(m)}$, it follows that $\displaystyle\langle f^{(m)}_{\e},\mu_\e^{(\ell)} \rangle=0$ for $\ell \neq m$.

% For $\e>0$ small enough we write $\displaystyle\df{\e}{j}$ in the basis 
% $$\{f^{(1)}_{\e},f_{\e}^{(j-1)}, \mu_\e^{(j)},f_{\e}^{(j+1)} \ldots, f^{(N)}_{\e}\},$$
% i.e.
% $$\displaystyle\df{\e}{j} = \alpha_{\e}^j \mu_\e^{(j)}+ \sum_{i=1,i\neq j}^{N} \alpha_\e^{i} \displaystyle \f{\e}{i} $$

With the above notation and initial observations established, we proceed to prove four preliminary results, which will culminate in Theorem \ref{thm:fhat}.

%for the linear response of $\displaystyle f_\e^{(j)}$

\begin{proposition}\label{prop:linresponselast}

Let $\ell\in\{1,\ldots,N\}$ and $s_\ell\in\{1,\ldots,S\}$ such that $\ell \in B_{s_\ell}$, then
\begin{align}
    \lim_{\e\to 0} \frac{ \left(\displaystyle f^{(\ell)}_{\e}\right)_j - \left(f^{(\ell)}\right)_j}{\e } = \lim_{\e\to 0} \frac{ \left(\displaystyle f^{(\ell)}_{\e}\right)_j}{\e}= \frac{1}{\ex{k\beta_{s_\ell}}-\ex{k\beta_{s_j}}} \left(D_\beta \dot{W} f^{(\ell)}\right)_j\label{fresp}
\end{align}
 for each $j\in\{1,\ldots,N\}\setminus B_{s_\ell},$ where $s_j \in \{1,\ldots, S\}$ satisfies $j\in B_{s_j}.$
%     \gf{I don't think $B$ should be used in the previous line.  In fact, you don't prove this, the proof talks about fibres not elements of $B$.}everything again.}
\end{proposition}
\begin{proof}
Consider $j\in\{1,\ldots,N\}\setminus B_{s_\ell}$, and $s_j\in\{1,\dots,S\}$ such that $j\in B_{s_j}.$ Recall from Lemma \ref{lem:zero} that $(f^{(\ell)})_j =0.$  On the one hand, we obtain that
\begin{align*}\frac{1}{\e}\left(P_{\e,\beta} {\displaystyle f^{(\ell)}_{\e}} - D_\beta {f^{(\ell)}}\right)_j &=\frac{1}{\e} \left(D_\beta(W_\e -\mathrm{Id} ) {\displaystyle f^{(\ell)}_{\e}}+D_\beta(\displaystyle f^{(\ell)}_{\e} - {f^{(\ell)}})\right)_j\\
&= \left(D_\beta\left(\frac{W_\e -\mathrm{Id}}{\e} \right) {\displaystyle f^{(\ell)}_{\e}}\right)_j + \frac{e^{2\pi i k \beta_{s_j}}}{\e} \left(\displaystyle f^{(\ell)}_{\e} - {f^{(\ell)}}\right)_j
\end{align*}
%\gf{where does the extra $1/\epsilon$ come from in the left hand term above? }\m{Fixed}
On the other hand, since $f^{(\ell)}\in V_{s_\ell}$ (see \eqref{eq:Vi}),
\begin{align*}
    \frac{1}{\e}\left(P_{\e,\beta} {\displaystyle f^{(\ell)}_{\e}} - D_\beta {f^{(\ell)}}\right)_j &=  \frac{1}{\e}\left(\lambda_{\e}^{(\ell)} \displaystyle f^{(\ell)}_{\e} - e^{-2\pi i k \beta_{s_\ell}} f_\e^{(\ell)} + e^{-2\pi i k \beta_{s_\ell}} f_\e^{(\ell)}- e^{-2\pi i k \beta_{s_\ell}} {f^{(\ell)}} \right)_j\\ &=\frac{1}{\e}\left(\lambda_{\e}^{(\ell)} - e^{-2\pi i k \beta_{s_\ell}}\right)\displaystyle \left(f^{(\ell)}_{\e}\right)_j + \frac{e^{-2\pi i k \beta_{s_\ell}}}{\e} \left( \displaystyle f^{(\ell)}_{\e} -  {f^{(\ell)}} \right)_j.
\end{align*}
 
 Combining the above equations we obtain that
$$  \left(e^{-2\pi i k\beta_{s_j}}- e^{-2\pi i k\beta_{s_\ell}}\right)\left(\frac{\displaystyle (f_{\e}^{(\ell)})_j - {(f^{(\ell)}})_j }{\e}\right)= \frac{\lambda_{\e}^{(\ell)} - e^{-2\pi i k \beta_{s_\ell}}}{\e} \displaystyle \left(\f{\e}{\ell}\right)_j -\left( D_\beta\left(\frac{W_\e -\mathrm{Id}}{\e} \right) {\displaystyle f^{(\ell)}_{\e}}\right)_j.$$
From Lemma \ref{lem:zero} it follows that $\displaystyle  \left(\displaystyle f_{\e}^{(\ell)}\right)_j\xrightarrow[]{\e\to 0} 0$ for every $j\in\{1,\ldots,N\}\setminus B_{s_\ell}.$ Hence,
%\gf{I don't see any mention of specific fibres $j$ in this referenced theorem.}\m{$j
%\in B_{s}$ is used to imply that $(f_
%\e^{(\ell})_{j}\to 0$ as $\e \to 0$. Prob Theorem $\ref{thm:3}$ is being wrongly quoted} \gf{Ok, also $\dot{W}$ isn't in Thm 4.2 and Thm 4.2 talks about convergence in $CP^{N-1}$ whereas here we talk about convergence of vectors...so I feel there are several details missing.} \m{Not several. It seems to be that quoting Lemma \ref{lem:zero} things work. The problem was created because we have changed the text of theorem \ref{thm:3} to convergence in $CP^N$}\gf{Ok, please check for other possible remaining similar vector vs $CP^N$ issues.} \m{Fixed, I think}
\begin{align*}
    \lim_{\e\to 0} \frac{\left(\displaystyle f^{(\ell)}_{\e}\right)_j - \left({f^{(\ell)}}\right)_j}{\e } &= \frac{1}{\ex{k\beta_{s_j}}-\ex{k\beta_{s_\ell} }}\left(\hat{\lambda}^{(\ell)}{\left(f^{(\ell)}\right)}_j - \left(D_\beta \dot{W} f^{(\ell)}\right)_j\right)\nonumber \\&= 
    \frac{1}{\ex{k\beta_{s_\ell}}-\ex{k\beta_{s_j}}} \left(D_\beta \dot{W} f^{(\ell)}\right)_j.
\end{align*}
\end{proof}

\begin{remark}\label{rmk:resp}
\rev{Proposition~4.7 gives an explicit componentwise formula for $(\hat f^{(\ell)})_j$ at indices $j\notin B_{s_\ell}$, where $\hat f^{(\ell)}$ is the response vector introduced in Theorem \ref{thm:fhat} (proved at the end of this section). In particular, \eqref{fresp} shows that outside the band $B_{s_\ell}$ the response is completely determined by the corresponding entries of $D_\beta\dot W f^{(\ell)}$, scaled by the factor $\left(\ex{k\beta_{s_\ell}}-\ex{k\beta_{s_j}}\right)^{-1}$. If $\dot W$ is supported close to the diagonal (for example, if $\dot W$ is tridiagonal), then applying $\dot W$ to a vector supported in $B_{s_\ell}$ can only produce nonzero entries in entries close to $B_{s_\ell}$ (with width controlled by the bandwidth). Consequently, $\left(\hat f^{(\ell)}\right)_j=0$ for all indices $j$ outside that entry neighbourhood.}
\end{remark}

\begin{proposition} \label{prop:Ae}
  Under the above notation of equation \eqref{eq:alpha}, the vector ${\boldsymbol\alpha}_\e = (\alpha_\e^1,\ldots, \alpha_\e^N)$ satisfies ${\boldsymbol\alpha}_\e =\left(A_{\beta}^\e\right)^{-1} \mathbf{v}_{\beta}^{\e}$
where $A_{\beta}^\e$ is a $N\times N$ matrix such that
$$ [A_{\beta}^\e]_{r,s} = \begin{cases}
    1, & \text{if }s=\ell\\
    \langle \mu_\e^{(\ell)}, \mu_\e^{(r)}    \rangle ,& \text{if }r \neq \ell\ \text{and}\ s = \ell\\
 \langle \displaystyle \f{\e}{s}, \displaystyle \f{\e}{\ell}     \rangle   ,&\text{if }\ell=r\ \text{and }s\neq \ell\\
    0,&\text{otherwise}.
\end{cases} $$
and
$$\mathbf{v}_{\beta}^{\e} =\left(\langle \displaystyle\df{\e}{\ell}, \mu_\e^{(1)} \rangle, \ldots, \langle \displaystyle\df{\e}{\ell}, \mu_\e^{(\ell-1)}  \rangle,\langle \displaystyle\df{\e}{\ell},  \displaystyle \f{\e}{\ell}\rangle,\langle \displaystyle\df{\e}{\ell}, \mu_\e^{(\ell+1)}    \rangle ,\ldots, \langle \displaystyle\df{\e}{\ell}, \mu_\e^{(N)}  \rangle  \right).$$ 
\end{proposition}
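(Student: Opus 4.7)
The plan is to derive the linear system $A_\beta^\e \boldsymbol{\alpha}_\e = \mathbf{v}_\beta^\e$ by taking suitable inner products of the expansion \eqref{eq:alpha} against the elements of the biorthogonal-like system $\{f_\e^{(1)},\ldots,f_\e^{(\ell-1)},\mu_\e^{(\ell)},f_\e^{(\ell+1)},\ldots,f_\e^{(N)}\}$ used to decompose $\dot f_\e^{(\ell)}$, and then reading off the resulting scalar identities as the rows of a matrix equation.

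More concretely, I will first fix $r \in \{1,\ldots,N\}\setminus\{\ell\}$ and take the inner product of \eqref{eq:alpha} with $\mu_\e^{(r)}$. Using the normalisation $\langle f_\e^{(r)}, \mu_\e^{(r)} \rangle = 1$ from \eqref{eq:mu1} and the biorthogonality relation $\langle f_\e^{(m)}, \mu_\e^{(r)} \rangle = 0$ for $m \neq r$ from \eqref{eq:mu2}, every term in the sum $\sum_{m \neq \ell} \alpha_\e^m \langle f_\e^{(m)}, \mu_\e^{(r)} \rangle$ vanishes except the one with $m = r$, which equals $\alpha_\e^r$. This yields the $r$-th row:
\begin{equation*}
\alpha_\e^\ell \langle \mu_\e^{(\ell)}, \mu_\e^{(r)}\rangle + \alpha_\e^r \;=\; \langle \dot f_\e^{(\ell)}, \mu_\e^{(r)}\rangle.
\end{equation*}
Next I will take the inner product of \eqref{eq:alpha} with $f_\e^{(\ell)}$ itself. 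Sesquilinearity and \eqref{eq:mu1} give $\langle \mu_\e^{(\ell)}, f_\e^{(\ell)}\rangle = \overline{\langle f_\e^{(\ell)}, \mu_\e^{(\ell)}\rangle} = 1$, producing the $\ell$-th row:
\begin{equation*}
\alpha_\e^\ell \;+\; \sum_{m \neq \ell} \alpha_\e^m \langle f_\e^{(m)}, f_\e^{(\ell)}\rangle \;=\; \langle \dot f_\e^{(\ell)}, f_\e^{(\ell)}\rangle.
\end{equation*}
Collecting these $N$ scalar identities into a single matrix equation gives exactly $A_\beta^\e \boldsymbol{\alpha}_\e = \mathbf{v}_\beta^\e$ with the row/column structure stated in the proposition: entry $1$ on the diagonal, the $\langle \mu_\e^{(\ell)},\mu_\e^{(r)}\rangle$ entries populating the $\ell$-th column outside the diagonal, the $\langle f_\e^{(s)},f_\e^{(\ell)}\rangle$ entries populating the $\ell$-th row outside the diagonal, and zeros elsewhere.

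Finally, I will invoke invertibility of $A_\beta^\e$ for $\e > 0$ sufficiently small. By Theorem \ref{thm:3} and \eqref{eq:mu3}, we have $f_\e^{(m)} \to f^{(m)}$ and $\mu_\e^{(r)} \to f^{(r)}$ as $\e \to 0$, and $\{f^{(1)},\ldots,f^{(N)}\}$ is an orthonormal basis of $\mathbb C^N$. Consequently, every off-diagonal entry appearing in $A_\beta^\e$ converges to $0$, so $A_\beta^\e \to \mathrm{Id}_N$; thus $A_\beta^\e$ is invertible for small $\e$ and ${\boldsymbol \alpha}_\e = (A_\beta^\e)^{-1}\mathbf{v}_\beta^\e$. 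The bulk of the argument is a bookkeeping exercise; the only subtle point is carefully tracking which of $f_\e$ and $\mu_\e$ appears on the left vs.\ the right of each inner product, since conjugation (via sesquilinearity) determines which matrix entries end up as $\langle \mu_\e^{(\ell)},\mu_\e^{(r)}\rangle$ versus its conjugate.
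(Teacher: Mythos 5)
Your proposal is correct and follows essentially the same strategy as the paper: pair \eqref{eq:alpha} against $\mu_\e^{(r)}$ (for $r\neq\ell$) and against $f_\e^{(\ell)}$, invoke \eqref{eq:mu1}--\eqref{eq:mu2} to collapse the sums, and read off the linear system. Your additional step establishing invertibility of $A_\beta^\e$ for small $\e$ (via $A_\beta^\e\to\mathrm{Id}_N$) is a welcome justification that the paper leaves implicit when it says ``by solving the above linear equation.''
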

\begin{proof}
    Observe that for every $r\in \{1,\ldots,\ell-1,\ell+1,\ldots,N\}$ we obtain that
$$ \langle \displaystyle\df{\e}{\ell}, \mu_\e^{(r)}\rangle = \left\langle\alpha_{\e}^\ell \mu_\e^{(\ell)}+ \sum_{m=1,m\neq r}^{N} \alpha_\e^{m} \displaystyle \f{\e}{m}, \mu_\e^{(r)}\right\rangle  = \alpha_\e^{r} + \alpha_{\e}^\ell \langle \mu_\e^{(\ell)},\mu_\e^{(r)} \rangle$$
and
$$\langle \displaystyle\df{\e}{\ell}, \mu_\e^{(\ell)}\rangle = \alpha_\e^\ell + \sum_{m=1,m\neq \ell}^{N} \alpha_{\e}^m\langle  \displaystyle \f{\e}{m}, \displaystyle \f{\e}{\ell} \rangle.$$
By solving the above linear equation, we obtain the desired result.
\end{proof}

\begin{proposition}\label{prop:linresponselastmu} Let $\ell\in\{1,\ldots,N\}$ and $s_\ell \in\{1,\ldots,S\}$ such that $N_{s_\ell-1}< \ell\leq N_{s_\ell}$. Then, 
$$\lim_{\e\to 0} \frac{\displaystyle \left(\mu^{(\ell)}_\e\right)_j - \left(\mu^{(\ell)}_\e\right)_j}{\e } = \lim_{\e\to 0} \frac{\displaystyle \left(\mu^{(\ell)}_\e\right)_j }{\e } = \frac{1}{e^{2\pi i k\beta_{s_\ell}}-e^{2\pi i k\beta_{s_j}}} \left(\dot{W} D_{-\beta} f^{(\ell)}\right)_j$$
for each $j\in\{1,\ldots, N\}\setminus B_{s_\ell}$, and  $s_j\in\{1,\ldots,N\}$ is such that $j\in B_{s_j}$.
%\gf{needs convergence in $CP^N$?} \m{No, this is in $\C^N$}

\end{proposition}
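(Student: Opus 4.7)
The strategy is to mirror the proof of Proposition \ref{prop:linresponselast}, but applied to the adjoint operator $(P_{\e,\beta})^{*}=W_\e D_{-\beta}$, whose relevant eigenvector is $\mu_\e^{(\ell)}$. Since $W_\e$ is real symmetric and $D_\beta^{*}=D_{-\beta}$, the adjoint has the stated form, and by \eqref{eq:mu3} together with the normalisation \eqref{eq:mu1} we have $\mu_\e^{(\ell)}\to f^{(\ell)}$ and $\overline{\lambda_\e^{(\ell)}}\to e^{2\pi i k \beta_{s_\ell}}$ as $\e\to 0$. Crucially, since $f^{(\ell)}\in V_{s_\ell}$, we have $(f^{(\ell)})_j=0$ for every $j\notin B_{s_\ell}$ and $D_{-\beta}f^{(\ell)}=e^{2\pi i k \beta_{s_\ell}}f^{(\ell)}$.

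Now fix $j\in B_{s_j}$ with $s_j\neq s_\ell$. I would compute the quantity $\frac{1}{\e}\bigl((W_\e D_{-\beta})\mu_\e^{(\ell)}-D_{-\beta}f^{(\ell)}\bigr)_j$ in two different ways. On one side, decomposing $W_\e=\mathrm{Id}+\e\dot W$ and using $(f^{(\ell)})_j=0$ produces
\[
\bigl(\dot W D_{-\beta}\mu_\e^{(\ell)}\bigr)_j+\frac{e^{2\pi i k \beta_{s_j}}}{\e}\bigl(\mu_\e^{(\ell)}\bigr)_j .
\]
On the other, using the eigenvalue relation $(W_\e D_{-\beta})\mu_\e^{(\ell)}=\overline{\lambda_\e^{(\ell)}}\mu_\e^{(\ell)}$ together with $D_{-\beta}f^{(\ell)}=e^{2\pi i k \beta_{s_\ell}}f^{(\ell)}$ gives
\[
\frac{\overline{\lambda_\e^{(\ell)}}-e^{2\pi i k \beta_{s_\ell}}}{\e}\bigl(\mu_\e^{(\ell)}\bigr)_j+\frac{e^{2\pi i k \beta_{s_\ell}}}{\e}\bigl(\mu_\e^{(\ell)}-f^{(\ell)}\bigr)_j,
\]
and the last term simplifies again by $(f^{(\ell)})_j=0$. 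Equating the two expressions and isolating $(\mu_\e^{(\ell)})_j/\e$ yields
\[
\bigl(e^{2\pi i k \beta_{s_\ell}}-e^{2\pi i k \beta_{s_j}}\bigr)\frac{(\mu_\e^{(\ell)})_j}{\e}=\bigl(\dot W D_{-\beta}\mu_\e^{(\ell)}\bigr)_j-\frac{\overline{\lambda_\e^{(\ell)}}-e^{2\pi i k \beta_{s_\ell}}}{\e}\bigl(\mu_\e^{(\ell)}\bigr)_j .
\]
The first term on the right converges to $(\dot W D_{-\beta}f^{(\ell)})_j$, while the second is $\smallO(1)$: the prefactor $\frac{1}{\e}(\overline{\lambda_\e^{(\ell)}}-e^{2\pi i k \beta_{s_\ell}})$ is bounded (by taking complex conjugates in \eqref{eq:lambda1staprox}), and $(\mu_\e^{(\ell)})_j\to 0$ for $j\notin B_{s_\ell}$. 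Since $\beta\in\Gamma$ the factor $e^{2\pi i k \beta_{s_\ell}}-e^{2\pi i k \beta_{s_j}}$ is nonzero, and dividing yields the claimed formula.

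The main obstacle is the input $(\mu_\e^{(\ell)})_j\to 0$ for $j\notin B_{s_\ell}$, i.e.\ the adjoint analogue of Lemma \ref{lem:zero}. This follows by replaying that lemma's proof with $P_{\e,\beta}$ replaced by $(P_{\e,\beta})^{*}=W_\e D_{-\beta}$: any accumulation point $\mu_{*}$ of $\{\mu_\e^{(\ell)}\}$ must satisfy $D_{-\beta}\mu_{*}=e^{2\pi i k \beta_{s_\ell}}\mu_{*}$, and since the values $\{e^{2\pi i k \beta_s}\}_{s=1}^{S}$ are pairwise distinct for $\beta\in\Gamma$, this forces $\mu_{*}\in V_{s_\ell}$; in particular $(\mu_{*})_j=0$ for $j\notin B_{s_\ell}$, and a standard subsequence argument delivers the limit. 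With this localisation in hand, the remainder of the argument is a direct transcription of Proposition \ref{prop:linresponselast}.
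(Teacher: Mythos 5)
Your proof is correct and follows essentially the same route the paper takes: the paper's own proof simply says to "repeat the computations presented in Proposition \ref{prop:linresponselast}" for the adjoint, using the convergence $\mu_\e^{(\ell)}\to f^{(\ell)}$ from \eqref{eq:mu3}, and your argument carries out that transcription explicitly and accurately, including the needed localisation $(\mu_\e^{(\ell)})_j\to 0$ for $j\notin B_{s_\ell}$ (which already follows directly from \eqref{eq:mu3} and $f^{(\ell)}\in V_{s_\ell}$, so your re-derivation of the adjoint analogue of Lemma \ref{lem:zero} is careful but not strictly necessary).
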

\begin{proof}
  The proof follows by repeating the computations in presented in Proposition \ref{prop:linresponselast} and recalling from \eqref{eq:mu3} that $\mu_\e^{(\ell)}\xrightarrow[]{\e\to 0} f^{(\ell)}$ for every $\ell\in\{1,\ldots,N\}.$  
\end{proof}

% \begin{proposition}\label{pro:InnerProdfe}
% Let $j\in\{1,\ldots, N\}$ and $\ell_j \in \{1,\ldots,S\}$ such that $N_{\ell_j-1}<j\leq N_{\ell_j} $. Then, given $s\in\{1,
% \ldots, N\}$ and $\ell_s$ such that $N_{\ell_s -1} < s \leq N_{\ell_s}.$
% \begin{enumerate}
%     \item[$(a)$] \m{ if $\ell_s \neq \ell_j$, 
%    \begin{align*}
% \lim_{\e\to 0}\langle\dot{f}^{(j)}_{\e},\mu_\e^{(s)}\rangle 
% =&\frac{-1}{\left(e^{-2\pi i k \beta_{\ell_j} } - e^{-2\pi i k \beta_{\ell_s}}\right)} \left\langle  D_\beta \dot{W} f^{(j)} , f_\e^{(s)}\right\rangle;
% \end{align*}}
%     \item[$(b)$] if $\ell_s = \ell_j$ and $s\neq j$ \m{
%     \begin{align*}
% \lim_{\e\to 0}\langle\dot{f}^{(j)}_{\e},\mu_\e^{(s)}\rangle=&\frac{-1}{\hat\lambda^{(j)} - \hat\lambda^{(s)}} \sum_{j=0, r\neq \ell_j}^{S-1} \frac{1}{e^{-2\pi i k \beta_{\ell_j}} - e^{-2\pi i k \beta_{j} }} \sum_{\ell = N_{j-1}+1}^{N_{j}} \left(D_\beta \dot{W} f^{(j)}\right)_\ell  \left( \dot{W} D_{-\beta} \overline{f^{(s)} }\right)_\ell;
% \end{align*}}
%     \item[$(c)$] if $s=j$, $\langle \displaystyle\df{\e}{j},\displaystyle \f{\e}{j} \rangle =0$ for every $\e$ small.
% \end{enumerate}

% -----

\begin{proposition}\label{pro:InnerProdfe}
Let $\ell \in \{1,\ldots, N\}$ and $s_\ell \in \{1,\ldots,S\}$ such that $\ell \in B_{s_\ell}$. Then, given $r \in \{1,\ldots, N\}$ and $s_r\in\{1,\ldots,S\}$ such that $r \in B_{s_r},$
\begin{enumerate}
    \item[$(a)$]  if  $s_r \neq s_\ell,$ 
   \begin{align*}
\lim_{\e\to 0}\langle\dot{f}^{(\ell)}_{\e},\mu_\e^{(r)}\rangle 
=&\frac{1}{e^{-2\pi i k \beta_{s_r} } - e^{-2\pi i k \beta_{s_\ell}}} \left\langle  D_\beta \dot{W} f^{(\ell)} , f^{(r)}\right\rangle;
\end{align*}
    \item[$(b)$] if $s_r = s_\ell$ and $r\neq\ell$ 
    \begin{align*}
\lim_{\e\to 0}\langle\dot{f}^{(\ell)}_{\e},\mu_\e^{(r)}\rangle=&\frac{1}{\hat\lambda^{(\ell)} - \hat\lambda^{(r)}} \sum_{s=1, s\neq s_\ell}^{S} \frac{1}{e^{-2\pi i k \beta_{s}} - e^{-2\pi i k \beta_{s_\ell} }} \left\langle 
 \pi_s D_\beta \dot{W} f^{(\ell)} ,\pi_s  \dot{W} D_{-\beta} f^{(r)}  \right\rangle ;
\end{align*}
\noindent where $\pi_s:\C^N\to V_s$ is the $\mathbb C^N$-orthogonal projection onto $V_s$.
  %  \item[$(c)$] if $r=\ell$, $\langle \displaystyle\df{\e}{\ell},\displaystyle \f{\e}{\ell} \rangle =0$ for every $\e$ small.
\end{enumerate}
%\gf{can the last sum be written as an inner product, so it looks more ``symmetric'' and comparable with case (a)? I see it is a ``restricted inner product'' so maybe it does not help.}
\end{proposition}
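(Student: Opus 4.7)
The plan is to derive one working identity by differentiating the eigenvalue equation $P_{\e,\beta}f_\e^{(\ell)} = \lambda_\e^{(\ell)}f_\e^{(\ell)}$ in $\e$ and pairing it against the left eigenvector $\mu_\e^{(r)}$. Since $(P_{\e,\beta})^*\mu_\e^{(r)} = \overline{\lambda_\e^{(r)}}\mu_\e^{(r)}$ and $\langle f_\e^{(\ell)},\mu_\e^{(r)}\rangle = 0$ for $r\neq\ell$ by \eqref{eq:mu2}, a short manipulation yields
\begin{equation*}
\langle\dot f_\e^{(\ell)},\mu_\e^{(r)}\rangle \;=\; \frac{\langle D_\beta\dot W f_\e^{(\ell)},\mu_\e^{(r)}\rangle}{\lambda_\e^{(\ell)}-\lambda_\e^{(r)}}.
\end{equation*}
Both (a) and (b) will follow from this identity by letting $\e\to 0$; the distinction is whether the denominator vanishes in the limit.

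Part (a) is the easy case: since $s_r\neq s_\ell$ and $\beta\in\Gamma$, the denominator has non-zero limit $e^{-2\pi ik\beta_{s_\ell}} - e^{-2\pi ik\beta_{s_r}}$, while the numerator tends to $\langle D_\beta\dot W f^{(\ell)}, f^{(r)}\rangle$ thanks to Theorem \ref{thm:3} and \eqref{eq:mu3}. Taking the quotient gives the claim.

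Part (b) is where the work lies. Here $\lambda_\e^{(\ell)}-\lambda_\e^{(r)} = \e(\hat\lambda^{(\ell)}-\hat\lambda^{(r)}) + o(\e)$ by Theorem \ref{thm:3}, and a direct check shows that the leading term of the numerator also vanishes: because $f^{(r)}\in V_{s_\ell}$ and $\hat W_{s_\ell}f^{(\ell)} = e^{2\pi ik\beta_{s_\ell}}\hat\lambda^{(\ell)}f^{(\ell)}$, one has $\langle D_\beta\dot W f^{(\ell)}, f^{(r)}\rangle = \hat\lambda^{(\ell)}\langle f^{(\ell)}, f^{(r)}\rangle = 0$. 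I will therefore expand numerator and denominator to first order in $\e$: writing $f_\e^{(\ell)} = f^{(\ell)} + \e\dot f^{(\ell)}_0 + o(\e)$ and $\mu_\e^{(r)} = f^{(r)} + \e\dot\mu^{(r)}_0 + o(\e)$ reduces the computation of the limit to evaluating $\langle D_\beta\dot W \dot f^{(\ell)}_0, f^{(r)}\rangle + \langle D_\beta\dot W f^{(\ell)}, \dot\mu^{(r)}_0\rangle$ and dividing by $\hat\lambda^{(\ell)} - \hat\lambda^{(r)}$.

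The main obstacle is that Propositions \ref{prop:linresponselast} and \ref{prop:linresponselastmu} give explicit formulae only for the components of $\dot f^{(\ell)}_0$ and $\dot\mu^{(r)}_0$ in $V_s$ for $s\neq s_\ell$; their $V_{s_\ell}$-components are not controlled directly. The key trick is that these unknown components enter the two brackets only through $c_r := \lim_{\e\to 0}\langle\dot f_\e^{(\ell)},\mu_\e^{(r)}\rangle$ itself, which is what I am computing. Decomposing $\dot f^{(\ell)}_0 = \pi_{s_\ell}\dot f^{(\ell)}_0 + \sum_{s\neq s_\ell}\pi_s\dot f^{(\ell)}_0$ and using that $\pi_{s_\ell}\dot W$ acts on $V_{s_\ell}$ as $\hat W_{s_\ell}$, with $f^{(r)}$ an eigenvector of eigenvalue $e^{2\pi ik\beta_{s_\ell}}\hat\lambda^{(r)}$, the $V_{s_\ell}$-piece of the first bracket equals $\hat\lambda^{(r)}\langle\pi_{s_\ell}\dot f^{(\ell)}_0, f^{(r)}\rangle = \hat\lambda^{(r)} c_r$. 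Dually, differentiating $\langle f_\e^{(\ell)},\mu_\e^{(r)}\rangle = 0$ gives $\langle f^{(\ell)},\dot\mu^{(r)}_0\rangle = -c_r$, producing a contribution of $-\hat\lambda^{(\ell)} c_r$ to the second bracket. The $V_{s_\ell}^\perp$-contributions from both brackets, on the other hand, are made explicit by Propositions \ref{prop:linresponselast} and \ref{prop:linresponselastmu} and, after pairing and simplification, each yields the same sum
\begin{equation*}
\sum_{s\neq s_\ell}\frac{1}{e^{-2\pi ik\beta_{s}} - e^{-2\pi ik\beta_{s_\ell}}}\,\bigl\langle\pi_s D_\beta\dot W f^{(\ell)},\ \pi_s\dot W D_{-\beta}f^{(r)}\bigr\rangle.
\end{equation*}
Substituting back into the working identity and solving the resulting self-consistent linear equation for $c_r$ (the $c_r$-terms on the right combine to $(\hat\lambda^{(r)}-\hat\lambda^{(\ell)})c_r$, which merges with the $(\hat\lambda^{(\ell)}-\hat\lambda^{(r)})c_r$ coming from the left via the denominator Taylor expansion, up to the factor $2$ coming from the two equal bracket contributions) produces the desired closed-form expression for $c_r$.
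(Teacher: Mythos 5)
Your part (a) is sound and essentially matches the paper's route (incidentally, you also correct a sign typo that appears in the paper's statement of (a)). Your algebra for part (b) — the vanishing of the leading-order numerator via $\pi_{s_\ell}D_\beta\dot{W}f^{(\ell)}=\hat\lambda^{(\ell)}f^{(\ell)}$, the use of adjointness and symmetry of $\hat{W}_{s_\ell}$, the pairing of the $(1-\pi_{s_\ell})$ pieces via Propositions~\ref{prop:linresponselast} and~\ref{prop:linresponselastmu}, and the final self-consistent cancellation producing the factor $2$ — does reproduce the correct closed form. The idea that the unknown $V_{s_\ell}$-components of the derivatives enter only through $c_r$ itself is a genuinely nice observation.

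However, there is a real gap. In (b) you write $f_\e^{(\ell)}=f^{(\ell)}+\e\dot f^{(\ell)}_0+\smallO(\e)$ and $\mu_\e^{(r)}=f^{(r)}+\e\dot\mu^{(r)}_0+\smallO(\e)$. Existence of these one-sided Taylor expansions at $\e=0$ has \emph{not} been established at this stage. Proposition~\ref{prop:nicefam} gives only $\mathcal C^\infty$-smoothness on $(0,\gamma)$ and convergence at $\e=0$; Propositions~\ref{prop:linresponselast} and~\ref{prop:linresponselastmu} give first-order behaviour only for the coordinates \emph{outside} $B_{s_\ell}$. Nothing yet controls $\pi_{s_\ell}(f_\e^{(\ell)}-f^{(\ell)})/\e$: you do not know it is bounded, let alone convergent, so the cross term $\langle D_\beta\dot W(f_\e^{(\ell)}-f^{(\ell)})/\e,\ \mu_\e^{(r)}-f^{(r)}\rangle$ cannot be dropped, and the identification $c_r=\langle\dot f^{(\ell)}_0,f^{(r)}\rangle$ (which also requires $\dot f_\e^{(\ell)}\to\dot f^{(\ell)}_0$, i.e.\ continuity of the derivative at $0$) is unjustified. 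Worse, the differentiability of $\e\mapsto f_\e^{(\ell)}$ at $\e=0$ is precisely what Theorem~\ref{thm:fhat} eventually proves — and Theorem~\ref{thm:fhat} uses Proposition~\ref{pro:InnerProdfe} as an input — so invoking it here makes the argument circular.

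The paper sidesteps all of this with the algebraic identity $D_\beta-e^{-2\pi i k\beta_{s_\ell}}=(D_\beta-e^{-2\pi ik\beta_{s_\ell}})(1-\pi_{s_\ell})$, applied inside the numerator after first exploiting $\langle f_\e^{(\ell)},\mu_\e^{(r)}\rangle=0$. This converts the numerator into $-\langle (D_\beta-e^{-2\pi ik\beta_{s_\ell}})(1-\pi_{s_\ell})f_\e^{(\ell)}/\e,\ (1-\pi_{s_\ell})\mu_\e^{(r)}/\e\rangle\cdot\e^2$, in which \emph{only} the $(1-\pi_{s_\ell})$-components of both vectors appear. These are exactly the quantities whose $\e\to 0$ limits Propositions~\ref{prop:linresponselast} and~\ref{prop:linresponselastmu} provide, so the limit is computed directly with no need for an expansion of the $\pi_{s_\ell}$-components and no self-consistency argument. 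If you want to keep your route, you would need to independently establish boundedness (and convergence) of $\pi_{s_\ell}(f_\e^{(\ell)}-f^{(\ell)})/\e$ and $\pi_{s_\ell}(\mu_\e^{(r)}-f^{(r)})/\e$ before doing the expansion, which is substantially more work than the paper's one-line projection trick.
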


\begin{proof} 
We start by showing $(a)$. Differentiating the equation $\displaystyle P_{\e,\beta} \displaystyle f^{(\ell)}_{\e} = \lambda_{\e}^{(\ell)} \displaystyle f^{(\ell)}_{\e}$ with respect to $\e$, and applying $\langle \cdot, \mu_\e^{(r)}\rangle$ in both sides we obtain from \eqref{eq:mu2} that
$$ \langle \dot{P}_{\e,\beta} \displaystyle f^{(\ell)}_{\e} ,\mu_\e^{(r)} \rangle   + \langle   P_{\e,\beta} \dot{f}^{(\ell)}_{\e},\mu_\e^{(r)}\rangle  = \dot{\lambda}_{\e}^{(\ell)}  \langle \displaystyle f^{(\ell)}_{\e},  \mu_\e^{(r)} \rangle+ \lambda_{\e}^{(\ell)} \langle \dot{f}^{(\ell)}_{\e} , \mu_\e^{(r)} \rangle.  $$
Hence $ \displaystyle \langle \dot{P}_{\e,\beta} \displaystyle f^{(\ell)}_{\e} ,\mu_\e^{(r)} \rangle  + \lambda_{\e}^{(r)} \langle    \dot{f}^{(\ell)}_{\e},\mu_\e^{(r)}\rangle =  \lambda_{\e}^{(\ell)} \langle    \dot{f}^{(\ell)}_{\e},\mu_\e^{(r)}\rangle,$ which implies
\begin{align}
    \langle    \dot{f}^{(\ell)}_{\e},\mu_\e^{(r)}\rangle=  \frac{1}{\lambda_{\e}^{(\ell)} - \lambda_{\e}^{(r)} }\langle \dot{P}_{\e,\beta} \displaystyle f^{(\ell)}_{\e} ,\mu_\e^{(r)} \rangle. \label{eq:inner1} 
\end{align}

Observe that $P_{\e,\beta} = D_\beta W_\e = D_\beta \left(\mathrm{Id} + \e \dot{W}\right)$, in this way
\begin{align}
    \dot{P}_{\e,\beta} \displaystyle f^{(\ell)}_{\e} = D_\beta \dot{W} \displaystyle f^{(\ell)}_{\e} = \frac{1}{\e}\left(\lambda_{\e}^{(\ell)} \displaystyle f^{(\ell)}_{\e} - D_\beta \displaystyle f^{(\ell)}_{\e}\right)\label{eq:Ff1}.
\end{align} 
Combining equations \eqref{eq:inner1} and \eqref{eq:Ff1} 
\begin{align}
    \langle\dot{f}^{(\ell)}_{\e},\mu_\e^{(r)}\rangle&=   \frac{1}{\lambda_{\e}^{(\ell)} - \lambda_{\e}^{(r)}}\left\langle \frac{1}{\e}\left(\lambda_{\e}^{(\ell)} \displaystyle f^{(\ell)}_{\e} - D_\beta \displaystyle f^{(\ell)}_{\e}\right) ,\mu_\e^{(r)} \right\rangle \nonumber \\
    &= \frac{1}{\e(\lambda_{\e}^{(\ell)} - \lambda_{\e}^{(r)}) }\langle D_\beta \displaystyle f^{(\ell)}_{\e} , \mu_\e^{(r)} \rangle \nonumber \\
    &= \frac{1}{\e(\lambda_{\e}^{(\ell)} - \lambda_{\e}^{(r)}) }\langle e^{-2\pi i k \beta_{s_\ell}} \displaystyle f^{(\ell)}_{\e} , \mu_\e^{(r)} \rangle +  \frac{1}{\e(\lambda_{\e}^{(\ell)} - \lambda_{\e}^{(r)}) } \langle (D_\beta-e^{-2\pi i k \beta_{s_\ell}})  (1-\pi_{s_\ell})f^{(\ell)}, \mu_\e^{(r)} \rangle\nonumber \\
    &= \frac{1}{\lambda_{\e}^{(\ell)} - \lambda_{\e}^{(r)}} \left\langle   \frac{1}{\e} \left(D_\beta -e^{-2\pi i k \beta_{s_\ell}}\right) (1-\pi_{s_\ell}) \displaystyle f_\e^{(\ell)}, \mu_\e^{(r)} \right\rangle \label{eq:key}
\end{align}
 where $\pi_{s_\ell}:\mathbb C^N \to V_{s_\ell}$ denotes the $\mathbb C^N$-orthogonal projection onto $V_s$. From Proposition \ref{prop:linresponselast} and \eqref{eq:mu3} it follows that if $r \in \{1,\ldots,N\}\setminus B_{s_\ell}$
\begin{align*}
\lim_{\e\to 0}\langle\dot{f}^{(\ell)}_{\e},\mu_\e^{(r)}\rangle =& \lim_{\e \to 0}\frac{1}{\lambda_{\e}^{(s_\ell)} - \lambda_{\e}^{(s_j)}} \left\langle   \left(D_\beta -e^{-2\pi i k \beta_{s_\ell}}\right)  \frac{(1-\pi_{s_\ell}) \displaystyle f_\e^{(\ell)}-0}{\e}, \mu_\e^{(r)} \right\rangle \\
=&\frac{-1}{e^{-2\pi i k \beta_{s_\ell}} - e^{-2\pi i k \beta_{s_r}}} \left\langle  D_\beta \dot{W} f^{(\ell)} , f^{(r)}   \right\rangle .
\end{align*}
implying $(a)$. 

We now show $(b)$. From \eqref{eq:mu1}, \eqref{eq:key} and Proposition \ref{prop:linresponselastmu} we obtain that if $s_r = s_\ell$ and $\ell\neq r$,
\begin{align*}
\lim_{\e\to 0}\langle\dot{f}^{(\ell)}_{\e},\mu_\e^{(r)}\rangle=& \lim_{\e \to 0} \left\langle   (D_\beta - e^{-2\pi i k \beta_{s_\ell}})\frac{(1-\pi_{s_\ell}) f_\e^{(\ell)}}{\e}, \frac{1}{\left({\overline{\hat\lambda^{(\ell)}}} - {\overline{\hat\lambda^{(r)}}}\right)\e + \smallO (\e)}\mu_\e^{(r)} \right\rangle\nonumber \\
=&\frac{-1}{\hat\lambda^{(\ell)} - \hat\lambda^{(r)}} \sum_{s=1, j\neq s_\ell}^{S} \frac{1}{e^{-2\pi i k \beta_{s_\ell}} - e^{-2\pi i k \beta_{s} }} \sum_{j = N_{s-1}+1}^{N_{s}} \left(D_\beta \dot{W} f^{(\ell)}\right)_j  \left(  \overline{\dot{W} D_{-\beta}f^{(r)} }\right)_j\\
=&\frac{-1}{\hat\lambda^{(\ell)} - \hat\lambda^{(r)}} \sum_{s=1, s\neq s_\ell}^{S} \frac{1}{e^{-2\pi i k \beta_{s_\ell}} - e^{-2\pi i k \beta_{s} }} \left\langle 
 \pi_s D_\beta \dot{W} f^{(\ell)} ,\pi_s  \dot{W} D_{-\beta} f^{(r)}  \right\rangle.
\end{align*}

\end{proof}

With the above results, we can finally prove Theorem \ref{thm:fhat}

\begin{proof}[Proof of Theorem \ref{thm:fhat}] 
We start by proving $(i)$. Recall from Definition \ref{def:proj} that $\pi: v\in \C^{N}\mapsto [v]\in\mathbb C \mathbb P^{N-1}$ denotes a $\mathcal C^\infty$ quotient map from $\mathbb C^N$ onto $\mathbb C \mathbb P^{N-1}$. Since $[v] = [tv]$ for every $t\in \mathbb C\setminus \{0\}$ and $v\in \mathbb C^N$, from the chain rule we obtain that 
\begin{align}
    \mathrm{D}_v \pi (s v) = 0\ \text{for every }s \in \mathbb C.\label{eq:d0}
\end{align}
Hence, from the inverse function theorem and %Theorem \ref{thm:3} 
\eqref{eq:feconv} we obtain that there exists $\delta>0$, such that for each $\e\geq 0$ small enough and every $\ell \in \{1,\ldots, N\}$ the map
$$v\in \left\{\displaystyle f^{(\ell)}_\e + w: w\in\mathbb C^{N}, \langle w,f_\e^{(\ell)}\rangle =0\ \text{and }\|w\|< \delta\right\}\mapsto [v] \in \mathbb C\mathbb P^{N-1}$$
is a diffeomorphism with its image, where $f_0^{(\ell)}$ should be interpreted as $f^{(\ell)}$.

Observe that $[\displaystyle f_\e^{(\ell)}] = [f^{(\ell)}+\e w +\smallO(\e)]$ for some $w\in \mathbb C^{N}$ satisfying $\langle w, f^{(\ell)}\rangle =0$ if and only if for every $\mathcal C^\infty$ function $g:\mathbb C \mathbb P^{N-1}\to \mathbb R$ it holds that
\begin{align}
    g\circ [f_\e^{(\ell)}] = g\circ [f^{(\ell)}] +  \e \mathrm{D}_{[f^{(\ell)}]} g\,   \mathrm{D}_{f^{(\ell)}} \pi\, w +\smallO(\e).   \label{eq:g}
\end{align}
In the following, we show that \eqref{eq:g} holds when taking $w = \hat{f}^{(\ell)}$ (see \eqref{eq:fhatl}).

For every $\e>0$ small enough and $\ell\in \{1,\ldots,S\}$ denote $\displaystyle \hat{f}_\e^{(\ell)}$ as the $\mathbb C^N$-orthogonal projection of $\displaystyle\dot{f}_\e^{(\ell)}$ (see \eqref{eq:alpha} and Proposition \ref{pro:InnerProdfe}) onto the linear space $\{v\in\mathbb C^{N}; \langle v,f_\e^{(\ell)}\rangle =0\}.$ From \eqref{eq:mu1} and \eqref{eq:mu2} we have that $$\mathrm{span}\left\{\mu_\e^{(1)},\ldots, \mu_\e^{(\ell-1)},\mu_\e^{(\ell+1)},\ldots,\mu_\e^{(N)}\right\} = \{v\in\mathbb C^{N}: \langle v,f_\e^{(\ell)}\rangle =0\}.$$
By combining Propositions~\ref{prop:Ae} and~\ref{pro:InnerProdfe} with the definition of $\hat{f}^{(\ell)}$ in \eqref{eq:fhatl}, we obtain that
\begin{align}
    \lim_{\e \to 0} \hat{f}^{(\ell)}_\e  =&   \sum_{s=1, s\neq s_\ell}^{S} \sum_{r \in B_{s_\ell}\setminus\{\ell\} }  \frac{1}{\hat\lambda^{(\ell)} - \hat\lambda^{(r)}}\frac{1}{ e^{-2\pi i k \beta_{s} }- e^{-2\pi i k \beta_{s_\ell}} } \left\langle 
 \pi_s D_\beta \dot{W} f^{(\ell)} ,\pi_s   \dot{W} D_{-\beta} f^{(r)}  \right\rangle f^{(r)} \nonumber\\
& +  \sum_{s=1, s\neq s_\ell}^{S} \sum_{r \in B_s}\frac{1}{e^{-2\pi i k \beta_s} - e^{-2\pi i k \beta_{s_\ell} }} \left\langle  D_\beta \dot{W} f^{(\ell)} , f^{(r)}   \right\rangle f^{(r)} =\hat{f}^{(\ell)}.\label{eq:s1}
\end{align}

%Given $j\in\{1,\ldots,N\}$ from the mean value theorem we obtain that for every $\e>0$ small enough there exists $\gamma_\e \in(0,\e)$ such that

From the mean value theorem we obtain that for every $\e>0$ small enough, there exists $c_\e \in(0,\e)$ such that
$$\displaystyle g\circ \left[\f{\e}{\ell}\right] - g\circ\left[ f^{(\ell)}\right] = \e \left.\frac{\d}{\d t} \left( g \circ \pi\circ f_t^{(\ell)}\right) \right|_{t=c_\e} =\displaystyle \e  \mathrm{D}_{[f_{c_\e}]}g\, \mathrm{D}_{f_{c_\e}^{(\ell)}} \pi\, \dot{f}^{(\ell)}_{c_\e}.$$
By the definition of $\displaystyle\hat{f}_{c_\e}^{(\ell)}$, we that $\langle \displaystyle \hat{f}_{c_\e}^{(\ell)},f_{c_\e}^{(\ell)} \rangle =0 $ and $\dot{f}_{c_\e}^{\ell} = \hat{f}_{c_\e}^{\ell} + b_\e f_{c_\e}$ for some $b_\e \in \mathbb C$. In view of equation \eqref{eq:d0} it follows that
\begin{align}
    \displaystyle \mathrm{D}_{f_{c_\e}^{(\ell)}} \pi\,  \dot{f}^{(\ell)}_{c_\e} = \mathrm{D}_{f_{c_\e}^{(\ell)}} \pi\,  (\hat{f}^{(\ell)}_{c_\e}+ b_\e f_\e^{(\ell)}) 
 =\mathrm{D}_{f_{c_\e}^{(\ell)}} \pi\, \displaystyle\hat{f}^{(\ell)}_{c_\e}.\label{eq:ga}
\end{align}
Hence, combining \eqref{eq:s1},  \eqref{eq:feconv} and \eqref{eq:ga} we obtain that
$$\lim_{\e\to 0}\frac{ \displaystyle g \circ \left[\f{\e}{\ell}\right] - g\circ \left[f^{(\ell)}\right]}{\e} = \mathrm{D}_{[f^{(\ell)}]}g\,  \mathrm{D}_{f^{(\ell)}}\pi\, \hat{f}^{(\ell)}.$$
Thus, $[f_\e^{\ell}] = [f^{(\ell)} +\e \hat{f}^{(\ell)} + \smallO(\e)]$ and by \eqref{eq:feconv}, \eqref{eq:s1} and the property $\displaystyle \langle f^{(\ell)}_\e , \hat{f}_\e^{(\ell)}\rangle =0$ we have $\langle f^{(\ell)}, \hat{f}^{(\ell)}\rangle =0$,  concluding the proof of $(i).$

In the following, we prove $(ii)$. Since $[f_\e^{\ell}] = [f^{(\ell)} +\e \hat{f}^{(\ell)} + \smallO(\e)]$, $\displaystyle F_\e^{(\ell)} = f_\e^{(\ell)} e^{2\pi i k x}$ and $\hat{F}^{(\ell)}(j,x) := \hat{f}^{(\ell)}(j) e^{2\pi i k x}$ for every $\e>0$ small enough, it holds that
 $$\Pi^{(\ell)}_\e(\cdot)  = \displaystyle \left\langle \cdot, F_\e^{(\ell)} \right\rangle   F_\e^{(\ell)} = \frac{ \displaystyle \left\langle \cdot, F^{(\ell)} + \e \hat{F}^{(\ell)} +\smallO(\e) \right\rangle}{\| F^{(\ell)} + \e \hat{F}^{(\ell)} +\smallO(\e)\|^2} \left( F^{(\ell)} + \e \hat{F}^{(\ell)} +\smallO(\e)\right).$$
Since $\langle F^{(\ell)} , \hat{F}^{(\ell)}\rangle = 0,$ we obtain that
$$\frac{1}{\| F^{(\ell)} + \e \hat{F}^{(\ell)} +\smallO(\e)\|^2} = \frac{1}{\langle F^{(\ell)} + \e \hat{F}^{(\ell)} +\smallO(\e),\langle F^{(\ell)} + \e \hat{F}^{(\ell)} +\smallO(\e)\rangle} =1+\smallO(\e).$$
In this way,
\begin{align}
  \Pi^{(\ell)}_\e(\cdot)&=  \displaystyle \left\langle \cdot, F^{(\ell)} + \e \hat{F}^{(\ell)} +\smallO(\e) \right\rangle \left( F^{(\ell)} + \e \hat{F}^{(\ell)} +\smallO(\e)\right)\nonumber\\
  &=  \Pi^{(\ell)}(\cdot) + \e   \left( \left\langle \cdot, F^{(\ell)}\right\rangle \hat{F}^{(\ell)} +  \left\langle \cdot, \hat{F}^{(\ell)} \right\rangle F^{(\ell)} \right) + \smallO(\e). \label{eq:cristina}
\end{align}

%Finally we show $(iii)$. Observe that $(i)$ implies that $\langle F^{(\ell)}, \hat{F}^{(\ell)}\rangle =0$. From the construction of the operator $\smallO(\e)$ in \eqref{eq:cristina}, it is clear that such an operator is a finite-rank operator and $\|\smallO(\e)\|/\e \xrightarrow[]{\e\to 0}0$.
Finally, we prove $(iii)$. Note that $(i)$ implies $\langle F^{(\ell)}, \hat{F}^{(\ell)} \rangle = 0$. From the construction of the operator $\smallO(\e)$ in \eqref{eq:cristina}, it is clear that this operator is of finite rank and satisfies $\|\smallO(\e)\|/\e \xrightarrow[]{\e \to 0} 0.$ 

\end{proof}

\section{Proof of Theorem \ref{thm:2} \texorpdfstring{$2(c)$}{2(c)} }

From Lemma \ref{prop:xama} and Theorem \ref{thm:3} we obtain that for every $\ell\in\{1,\ldots,N\}$ and for every $\e>0$ sufficiently small
\begin{align}
    \lambda_\e^{(\ell)}= e^{-2\pi i k \beta_s} + \e \hat\lambda^{(\ell)} + \smallO(\e)\ \text{for some }s\in\{1,\ldots,S\}.  \label{eq:star}
\end{align}
In this section we show that there exists $\doublehat{\lambda}^{(\ell)}\in\mathbb C$ such that
\begin{align}
  \lambda_\e^{(\ell)}= e^{-2\pi i k \beta_s} + \e \hat\lambda^{(\ell)} + \e^2 \doublehat{\lambda}^{(\ell)}+ \smallO(\e^2)\ \text{for some }s\in\{1,\ldots,S\}.\label{eq:lambdahhat1}  
\end{align}
The proof of Theorem \ref{thm:2} is complete once the above equation is established. We first establish the following lemma before proving equation \eqref{eq:lambdahhat1}.

\begin{lemma}
  Let $\gamma>0$ be as in Proposition \ref{prop:nicefam}. For every $\ell\in\{1,\ldots, N\}$ the map 
  \begin{align}
      \e\in [0,\gamma)\mapsto \begin{cases}
        \displaystyle \dot{\lambda}_{\e}^{(\ell)} = \frac{\partial}{\partial \e} \lambda_{\e}^{(\ell)},& \text{if}\ \e \in (0,\gamma_k),\vspace{0.1cm}\\
        \hat{\lambda}^{(\ell)},&\text{if}\ \e=0,
    \end{cases} \label{eq:map}
  \end{align}
is continuous at $0$ and differentiable in $(0,\gamma).$\label{lem:lambcont}
\end{lemma}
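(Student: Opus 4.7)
The plan is to handle the two assertions separately: differentiability on the open interval $(0,\gamma)$ follows from standard finite-dimensional matrix perturbation theory, while continuity at $\e=0$ requires a direct calculation based on an explicit formula for $\dot{\lambda}_\e^{(\ell)}$ derived from the eigenvalue equation together with the biorthogonality of the eigenvectors and the adjoint eigenvectors.

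For $\e\in(0,\gamma)$: by Theorem \ref{thm:gamma} the matrix $P_{\e,\beta}$ has $N$ distinct (hence simple) eigenvalues, so the classical theory \cite[Chapter Two]{Kato} ensures that $\e\mapsto\lambda_\e^{(\ell)}$ is $\mathcal{C}^\infty$ on $(0,\gamma)$, which immediately gives differentiability of the map in \eqref{eq:map} on $(0,\gamma)$.

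For continuity at $\e=0$, I would first derive the Hellmann--Feynman-type formula. Differentiating $P_{\e,\beta} f_\e^{(\ell)} = \lambda_\e^{(\ell)} f_\e^{(\ell)}$ with respect to $\e$, pairing with $\mu_\e^{(\ell)}$, and using $(P_{\e,\beta})^*\mu_\e^{(\ell)} = \overline{\lambda_\e^{(\ell)}}\mu_\e^{(\ell)}$ together with the normalization $\langle f_\e^{(\ell)},\mu_\e^{(\ell)}\rangle = 1$ from \eqref{eq:mu1}, the $P_{\e,\beta}\dot{f}_\e^{(\ell)}$ term cancels and one is left with
\begin{equation*}
\dot{\lambda}_\e^{(\ell)} = \langle \dot{P}_{\e,\beta} f_\e^{(\ell)}, \mu_\e^{(\ell)}\rangle = \langle D_\beta \dot{W} f_\e^{(\ell)},\mu_\e^{(\ell)}\rangle.
\end{equation*}
Then, using Proposition \ref{prop:nicefam} together with \eqref{eq:mu3}, both $f_\e^{(\ell)}\to f^{(\ell)}$ and $\mu_\e^{(\ell)}\to f^{(\ell)}$ in $\mathbb{C}^N$ as $\e\to 0$, so continuity of the inner product yields
\begin{equation*}
\lim_{\e\to 0}\dot{\lambda}_\e^{(\ell)} = \langle D_\beta\dot{W} f^{(\ell)}, f^{(\ell)}\rangle.
\end{equation*}

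The remaining step, and the only place requiring care, is to identify this limit with $\hat{\lambda}^{(\ell)}$. Let $s_\ell\in\{1,\ldots,S\}$ be such that $\ell\in B_{s_\ell}$, so that $f^{(\ell)}\in V_{s_\ell}$ by Theorem \ref{thm:3}. Since $D_\beta$ is block-diagonal with respect to the decomposition $\mathbb{C}^N = V_1\oplus\cdots\oplus V_S$, it commutes with the orthogonal projection $\pi_{s_\ell}$; and because $\pi_{s_\ell}f^{(\ell)} = f^{(\ell)}$, only the component of $D_\beta\dot{W} f^{(\ell)}$ lying in $V_{s_\ell}$ contributes to the inner product. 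Combining $\pi_{s_\ell}\dot{W} f^{(\ell)} = \hat{W}_L f^{(\ell)}$ (by definition of $\hat{W}_L$ in \eqref{What}) with $\hat{P}_\beta f^{(\ell)} = D_\beta\hat{W}_L f^{(\ell)} = \hat{\lambda}^{(\ell)} f^{(\ell)}$ gives
\begin{equation*}
\langle D_\beta\dot{W} f^{(\ell)}, f^{(\ell)}\rangle = \langle D_\beta \hat{W}_L f^{(\ell)}, f^{(\ell)}\rangle = \hat{\lambda}^{(\ell)}\|f^{(\ell)}\|^2 = \hat{\lambda}^{(\ell)},
\end{equation*}
which establishes continuity at $0$ and completes the proof. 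I do not expect any serious obstacle: the cancellation in the Hellmann--Feynman identity is clean, and the block-diagonal reduction that turns $\dot{W}$ into $\hat{W}_L$ against a vector supported on a single band is precisely the same mechanism used already in the proof of Theorem \ref{thm:3}.
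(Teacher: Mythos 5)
Your proof is correct and follows the same core strategy as the paper (Hellmann--Feynman), but you evaluate the limit more directly. The paper also starts from $\dot{\lambda}_\e^{(\ell)} = \langle D_\beta\dot{W}f_\e^{(\ell)},\mu_\e^{(\ell)}\rangle$, but then substitutes $D_\beta\dot{W}f_\e^{(\ell)} = \frac{1}{\e}(\lambda_\e^{(\ell)} f_\e^{(\ell)} - D_\beta f_\e^{(\ell)})$, uses the normalisation to peel off $\frac{1}{\e}(\lambda_\e^{(\ell)} - e^{-2\pi i k\beta_{s_\ell}})$, and invokes Proposition \ref{prop:linresponselast} to show the leftover term $\frac{1}{\e}\langle(D_\beta - e^{-2\pi i k\beta_{s_\ell}})(1-\pi_{s_\ell})f_\e^{(\ell)},\mu_\e^{(\ell)}\rangle$ vanishes as $\e\to 0$; the limit is then $\hat{\lambda}^{(\ell)}$ by \eqref{eq:lambda1staprox}. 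You instead pass to the limit immediately, using $f_\e^{(\ell)}\to f^{(\ell)}$ (Proposition \ref{prop:nicefam}) and $\mu_\e^{(\ell)}\to f^{(\ell)}$ (equation \eqref{eq:mu3}), and then verify $\langle D_\beta\dot{W}f^{(\ell)},f^{(\ell)}\rangle = \hat{\lambda}^{(\ell)}$ algebraically from $f^{(\ell)}\in V_{s_\ell}$, $\pi_{s_\ell}\dot{W}f^{(\ell)} = \hat{W}_L f^{(\ell)}$, and $\hat{P}_\beta f^{(\ell)} = \hat{\lambda}^{(\ell)}f^{(\ell)}$. Your route is marginally cleaner: it sidesteps the $1/\e$-bookkeeping and the appeal to Proposition \ref{prop:linresponselast}, replacing them with a short direct computation; the paper's route has the virtue of making the identification with the difference quotient $\frac{\lambda_\e^{(\ell)}-e^{-2\pi i k\beta_{s_\ell}}}{\e}$ explicit, which ties the lemma transparently to the first-order expansion already proved in Theorem \ref{thm:3}. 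Both are sound.
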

\begin{proof}
   Let $\ell\in\{1,\ldots,N\}$ and $s\in\{1,\ldots,S\}$ such that $\ell \in B_{s}$.
  %if $j\in \{N_{\ell-1}+1,\ldots, N_{\ell}\}$ for some $\ell>1$ the proof is similar. 
 Since $P_{\e,\beta} \displaystyle f^{(\ell)}_{\e} = \lambda_{\e}^{(\ell)} \displaystyle f^{(\ell)}_{\e}$, we obtain that
    $$ \dot{P}_{\e,\beta} \displaystyle f^{(\ell)}_{\e} +P_{\e,\beta} \dot{f}^{(\ell)}_{\e}  = \dot{\lambda}_{\e}^{(\ell)} \displaystyle f^{(\ell)}_{\e} + \lambda_{\e}^{(\ell)} \dot{f}^{(\ell)}_{\e}.$$
By applying $\langle\cdot, \displaystyle \mu^{(\ell)}_{\e}\rangle$ in both sides, we obtain from \eqref{eq:mu1} and  $\displaystyle P_{\e,\beta}^*\mu_\e^{(\ell)} = \lambda_\e \mu_\e^{(\ell)}$ that
\begin{align}
    \dot{\lambda}_{\e}^{(\ell)} = \langle \dot{P}_{\e,\beta} \displaystyle f^{(\ell)}_{\e} ,\mu_\e^{(\ell)} \rangle = \langle D_\beta \dot{W} \displaystyle f^{(\ell)}_{\e} ,\mu_\e^{(\ell)} \rangle.\label{eq:lambdadot}
\end{align}

 Since $\e \mapsto \dot{\lambda}_\e^{(\ell)}$ is differentiable in $(0,\gamma)$ (see Proposition \ref{prop:nicefam}) to conclude the proof we must prove that \eqref{eq:map} is continuous at $0$. Therefore, from \eqref{eq:star} it is enough to show that
$$\lim_{\e\to 0} \dot{\lambda}_\e = \lim_{\e \to 0}\langle \dot{P}_{\e,\beta} \displaystyle f^{(\ell)}_{\e} ,\mu_\e^{(\ell)} \rangle = \lim_{\e \to 0}\frac{\lambda_{\e}^{(\ell)} - e^{-2\pi i k \beta_s}}{\e} = \hat{\lambda}^{(\ell)}.$$
Observe that for every $s\in \{1,\ldots,S\},$ 
\begin{align}
D_\beta - e^{-2 i k \beta_s} = (D_\beta - e^{-2 i k \beta_s})(1-\pi_s)\label{eq:pis}
\end{align} where $\pi_s: \C^N\to V_s$ is the $\mathbb C^N$-orthogonal projection onto $V_s$. From \eqref{eq:Ff1}, we obtain that
\begin{align*}
\langle \dot{P}_{\e,\beta} \displaystyle f^{(\ell)}_{\e} ,\mu_\e^{(\ell)} \rangle &= \frac{1}{\e}\left\langle \lambda_{\e}^{(\ell)} \displaystyle f^{(\ell)}_{\e} - D_\beta \displaystyle f^{(\ell)}_{\e},\mu_\e^{(\ell)}\right \rangle\\
   &= \frac{1}{\e} \left(\lambda_{\e}^{(\ell)} - e^{-2\pi i k\beta_s}\right) -\frac{1}{\e}  \left\langle \left(D_\beta -e^{-2\pi i k \beta_s}\right) f_\e^{(\ell)} , \mu^{(\ell)}_\e \right\rangle,\\
   &= \frac{1}{\e} \left(\lambda_{\e}^{(\ell)} - e^{-2\pi i k\beta_s}\right) -\frac{1}{\e} \left\langle \left(D_\beta -e^{-2\pi i k \beta_s}\right) (1-\pi_s) f_\e^{(\ell)} , \mu^{(\ell)}_\e \right\rangle.
\end{align*} 
  Since $[\mu_\e^{(\ell)}]\xrightarrow[]{\e\to 0} [f^{(\ell)}]$ with $f^{(\ell)} \in V_s$ and  $(1 - \pi_s) \displaystyle f_\e^{(\ell)}/\e$ converges to an element in the orthogonal complement of $V^s$ as $\e\to 0$ (see \ref{prop:linresponselast}),  we have that
$$\lim_{\e\to 0}\left\langle \frac{1}{\e} (1-\pi_s) f_\e^{(\ell)} , \mu^{(\ell)}_\e \right\rangle =0, $$
which implies that
$$\lim_{\e\to 0} \dot{\lambda}_\e^{(\ell)}= \lim_{\e \to 0}\langle \dot{P}_{\e,\beta} \displaystyle f^{(\ell)}_{\e} ,\mu_\e^{(\ell)} \rangle = \lim_{\e \to 0}\frac{\lambda_{\e}^{(\ell)} - e^{-2\pi i k \beta_s}}{\e} = \hat{\lambda}^{(\ell)}.$$

\end{proof}

\begin{theorem}\label{thm:hlambda}
    Given $\ell\in\{1,\ldots, N\}$, we have that
    $$\lambda_{\e}^{(\ell)} = e^{-2\pi i k \beta_{s_\ell}} + \e \hat{\lambda}^{(\ell)} + \e^2 \doublehat{\lambda}^{(\ell)} + \smallO(\e^2), $$
where $s_\ell \in\{1,\ldots,S\}$ such that $\ell \in B_{s_\ell},$ $\hat{\lambda}^{(\ell)}$ is given by Theorem \ref{thm:3} and 
$$\doublehat{\lambda}^{(\ell)}= 2\sum_{s=1,s\neq s_\ell}^{S} \frac{1}{e^{-2\pi i k \beta_{s_\ell}} - e^{-2\pi i k \beta_s}} \left\langle 
 \pi_s D_\beta \dot{W} f^{(\ell)} ,\pi_s  \dot{W} D_{-\beta} f^{(\ell)}  \right\rangle$$
and  $\pi_s:\C^N\to V_s$ is the $\mathbb C^N$-orthogonal projection onto $V_s$.

\end{theorem}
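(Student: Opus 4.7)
The plan is to deduce the quadratic expansion of $\lambda_\e^{(\ell)}$ by projecting the eigenvector equation onto $V_{s_\ell}$, taking an inner product with $f^{(\ell)}$, and expanding the resulting Rayleigh-quotient-type identity in $\e$. Starting from $P_{\e,\beta} f_\e^{(\ell)} = \lambda_\e^{(\ell)} f_\e^{(\ell)}$, one has $(D_\beta - \lambda_\e^{(\ell)}) f_\e^{(\ell)} = -\e D_\beta \dot{W} f_\e^{(\ell)}$. Because $D_\beta|_{V_{s_\ell}}$ is the scalar $e^{-2\pi i k \beta_{s_\ell}}$, applying $\pi_{s_\ell}$ and pairing with $f^{(\ell)} \in V_{s_\ell}$ gives the exact identity
\[ \bigl(\lambda_\e^{(\ell)} - e^{-2\pi i k \beta_{s_\ell}}\bigr)\, \bigl\langle f_\e^{(\ell)}, f^{(\ell)} \bigr\rangle \;=\; \e \, \bigl\langle D_\beta \dot{W} f_\e^{(\ell)}, f^{(\ell)} \bigr\rangle. \]

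Dividing by $\e$ and expanding both sides, comparing the $O(\e^0)$ terms recovers $\hat{\lambda}^{(\ell)}=\langle D_\beta \dot{W} f^{(\ell)}, f^{(\ell)}\rangle$ (consistent with Theorem \ref{thm:3}), and comparing the $O(\e^1)$ terms produces the sought expression for $\doublehat{\lambda}^{(\ell)}$. The required inputs are: first, the $\mathbb{C}^N$-norm expansion $f_\e^{(\ell)} = f^{(\ell)} + \e \phi^{(\ell)} + \smallO(\e)$, obtained by upgrading the projective expansion of Theorem \ref{thm:fhat}(i) using the unit-norm, positive-inner-product gauge of Proposition \ref{prop:nicefam}, which simultaneously forces $\langle \phi^{(\ell)}, f^{(\ell)} \rangle = 0$ and hence $\langle f_\e^{(\ell)}, f^{(\ell)} \rangle = 1 + O(\e^2)$; and, second, the explicit formula from Proposition \ref{prop:linresponselast} for the outside components $\pi_s \phi^{(\ell)}$ with $s \neq s_\ell$, which writes $\pi_s \phi^{(\ell)}$ as a known multiple of $\pi_s \dot{W} f^{(\ell)}$.

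The main subtlety is the treatment of the ``inside'' piece $\pi_{s_\ell} \phi^{(\ell)}$, which the first-order equation does not determine and which is genuinely gauge-dependent. The crucial observation is that its contribution to $\langle D_\beta \dot{W} \phi^{(\ell)}, f^{(\ell)} \rangle$ must cancel: since $\pi_{s_\ell} \dot{W} \pi_{s_\ell} = \hat{W}_{s_\ell}$ is a real-symmetric block for which $f^{(\ell)}$ is an eigenvector, the vector $\pi_{s_\ell} D_\beta \dot{W} \pi_{s_\ell} \phi^{(\ell)}$ lies in $\mathrm{span}(f^{(\ell)})$ with a coefficient proportional to $\langle \pi_{s_\ell} \phi^{(\ell)}, f^{(\ell)} \rangle$, which vanishes by the gauge. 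This cancellation --- powered by the real-symmetry of $\hat{W}_{s_\ell}$ making $\hat{W}_{s_\ell} f^{(\ell)}$ parallel to $f^{(\ell)}$ --- is exactly what makes quadratic response of the individual eigenvalues possible in the degenerate setting, despite the first-order eigenfunction correction inside $V_{s_\ell}$ being gauge-dependent. Once the inside contribution is eliminated, inserting the explicit formula for $(1-\pi_{s_\ell})\phi^{(\ell)}$, using $D_{-\beta} f^{(\ell)} = e^{2\pi i k \beta_{s_\ell}} f^{(\ell)}$ to rewrite the result, and exploiting the mutual orthogonality of the subspaces $V_s$ to collapse cross-terms in the sum, produces the stated closed-form expression for $\doublehat{\lambda}^{(\ell)}$ in terms of $\langle \pi_s D_\beta \dot{W} f^{(\ell)},\,\pi_s \dot{W} D_{-\beta} f^{(\ell)} \rangle$.
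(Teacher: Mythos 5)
Your approach is correct and genuinely different from the paper's. The paper establishes quadratic response by differentiating $\dot{\lambda}_\e^{(\ell)} = \langle D_\beta\dot{W}f_\e^{(\ell)},\mu_\e^{(\ell)}\rangle$ once more to obtain $\ddot{\lambda}_\e^{(\ell)}$, massaging this with $D_\beta\dot W f_\e = \tfrac1\e(\lambda_\e f_\e - D_\beta f_\e)$ and its adjoint analogue, evaluating the $\e\to 0$ limit via Propositions \ref{prop:linresponselast} and \ref{prop:linresponselastmu}, and then invoking the Mean Value Theorem together with the continuity result Lemma \ref{lem:lambcont} to pass from $\lim\ddot\lambda_\e$ to the $\e^2$-coefficient. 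Your route — the exact Rayleigh-quotient identity $(\lambda_\e^{(\ell)}-e^{-2\pi i k\beta_{s_\ell}})\langle f_\e^{(\ell)},f^{(\ell)}\rangle=\e\langle D_\beta\dot W f_\e^{(\ell)},f^{(\ell)}\rangle$ combined with a first-order expansion of $f_\e^{(\ell)}$ — is shorter, avoids Lemma \ref{lem:lambcont} and Proposition \ref{prop:linresponselastmu} entirely (no adjoint eigenvectors $\mu_\e^{(\ell)}$ are needed), and establishes the existence of the second-order expansion directly rather than deducing it from a limit of second derivatives. Both proofs ultimately hinge on the same cancellation mechanism (the undetermined $V_{s_\ell}$-part of the first-order eigenvector correction pairs to zero against $f^{(\ell)}$).

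Two small remarks. First, your justification of the cancellation is slightly imprecise: the vector $\pi_{s_\ell}D_\beta\dot W\pi_{s_\ell}\phi^{(\ell)}$ need not lie in $\mathrm{span}(f^{(\ell)})$; what is true is that $\langle\hat W_{s_\ell}\pi_{s_\ell}\phi^{(\ell)},f^{(\ell)}\rangle=\langle\pi_{s_\ell}\phi^{(\ell)},\hat W_{s_\ell}f^{(\ell)}\rangle=\rho_{s_\ell}\langle\phi^{(\ell)},f^{(\ell)}\rangle=0$ by moving the real-symmetric $\hat W_{s_\ell}$ across the inner product — the conclusion is the same. Second, if you carry your computation to the end you obtain $\doublehat\lambda^{(\ell)}=\langle D_\beta\dot W\phi^{(\ell)},f^{(\ell)}\rangle=\sum_{s\neq s_\ell}\frac{1}{e^{-2\pi i k\beta_{s_\ell}}-e^{-2\pi i k\beta_s}}\langle\pi_s D_\beta\dot W f^{(\ell)},\pi_s\dot W D_{-\beta}f^{(\ell)}\rangle$, \emph{without} the prefactor $2$ appearing in the theorem statement. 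This is not an error on your part: the paper's own proof concludes $\lim_{\e\to0}\tfrac{\dot\lambda_\e^{(\ell)}-\hat\lambda^{(\ell)}}{\e}=2\sum_{s\neq s_\ell}(\cdots)$, but since this limit equals $\ddot\lambda^{(\ell)}(0^+)=2\doublehat\lambda^{(\ell)}$, the $\e^2$-coefficient $\doublehat\lambda^{(\ell)}$ is $\sum_{s\neq s_\ell}(\cdots)$, not $2\sum_{s\neq s_\ell}(\cdots)$. Your derivation thus confirms what the paper's proof actually yields, and flags the factor of $2$ in the statement of Theorem \ref{thm:hlambda} as a typographical slip.
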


\begin{proof}
Recall from \eqref{eq:lambdadot} that $\dot{\lambda}^{(\ell)}_{\e}  = \langle \dot{P}_{\e,\beta} \displaystyle f^{(\ell)}_{\e},\mu_\e^{(\ell)}  \rangle =  \langle D_\beta \dot{W} \displaystyle f^{(\ell)}_{\e},\mu_\e^{(\ell)}  \rangle$
for every $\e>0$ small enough. Hence
$$\ddot{\lambda}^{(\ell)}_{\e}  = \langle D_\beta \dot{W} \dot{f}^{(\ell)}_{\e},\mu_\e^{(\ell)}\rangle + \langle D_\beta \dot{W} \displaystyle f^{(\ell)}_{\e},\dot{\mu}_\e^{(\ell)}\rangle =  \langle D_\beta \dot{W} \displaystyle f^{(\ell)}_{\e},\dot{\mu}_\e^{(\ell)}\rangle+  \langle \dot{f}^{(\ell)}_{\e}, \left(D_\beta \dot{W} \right)^*\mu_\e^{(\ell)}\rangle.$$
From \eqref{eq:Ff1}, \eqref{eq:pis} and
$$\langle \dot{f}^{(\ell)}_{\e},\mu_\e^{(\ell)}\rangle +  \langle \displaystyle f^{(\ell)}_{\e},\dot{\mu}_\e^{(\ell)}\rangle =\frac{\partial}{\partial \e} \langle f_\e^{(\ell)},\mu_\e^{(\ell)} \rangle = \frac{\partial}{\partial \e }1  =  0,  $$
we obtain that
\begin{align*}
\ddot{\lambda}^{(\ell)}_{\e} 
=&\ \left\langle\frac{1}{\e}\left(\lambda_{\e}^{(\ell)} f^{(\ell)}_{\e} - D_\beta f^{(\ell)}_{\e}\right),\dot{\mu}^{(\ell)}_\e \right\rangle 
+ \left\langle \dot{f}_\e^{(\ell)}, \frac{1}{\e}\left(\overline{\lambda_{\e}^{(\ell)}} \mu^{(\ell)}_{\e} - \overline{D_\beta} \mu^{(\ell)}_{\e}\right)\right\rangle \\
=&\ \frac{1}{\e} \lambda_{\e}^{(\ell)} \left( \langle \dot{f}^{(\ell)}_{\e}, \mu_\e^{(\ell)} \rangle + \langle f^{(\ell)}_{\e}, \dot{\mu}_\e^{(\ell)} \rangle \right)  
- \left\langle D_\beta \dot{f}^{(\ell)}_{\e}, \frac{1}{\e} \mu_\e^{(\ell)} \right\rangle 
- \left\langle \frac{1}{\e} D_\beta f^{(\ell)}_{\e}, \dot{\mu}_\e^{(\ell)} \right\rangle \\
=&\ \left\langle -e^{-2\pi i k\beta_{s_\ell}} \dot{f}^{(\ell)}_{\e} + e^{-2\pi i k\beta_{s_\ell}} \dot{f}^{(\ell)}_{\e} - D_\beta \dot{f}^{(\ell)}_{\e}, \frac{1}{\e} \mu_\e^{(\ell)} \right\rangle \\
&\quad + \left\langle \frac{1}{\e}\left( -e^{-2\pi i k\beta_{s_\ell}} f^{(\ell)}_{\e} + e^{-2\pi i k\beta_{s_\ell}} f^{(\ell)}_{\e} - D_\beta f^{(\ell)}_{\e} \right), \dot{\mu}_\e^{(\ell)} \right\rangle \\
=&\ \frac{-e^{-2\pi i k\beta_{s_\ell}}}{\e} \left( \left\langle \dot{f}^{(\ell)}_{\e}, \mu_\e^{(\ell)} \right\rangle + \left\langle f^{(\ell)}_{\e}, \dot{\mu}_\e^{(\ell)} \right\rangle \right) + \left\langle (e^{-2\pi i k \beta_{s_\ell}} - D_\beta)(1 - \pi_s)\dot{f}_\e^{(\ell)}, \frac{1}{\e} \mu_\e^{(\ell)} \right\rangle \\
&\quad + \left\langle \frac{1}{\e}(e^{-2\pi i k \beta_{s_\ell}} - D_\beta)(1 - \pi_{s_\ell}) f_\e^{(\ell)}, \dot{\mu}_\e^{(\ell)} \right\rangle \\
=&\ \left\langle (e^{-2\pi i k \beta_{s_\ell}} - D_\beta)(1 - \pi_s)\dot{f}_\e^{(\ell)}, \frac{1}{\e} (1 - \pi_s)\mu_\e^{(\ell)} \right\rangle \\
&\quad + \left\langle \frac{1}{\e}(e^{-2\pi i k \beta_{s_\ell}} - D_\beta)(1 - \pi_{s_\ell}) f_\e^{(\ell)}, (1 - \pi_{s_\ell})\dot{\mu}_\e^{(\ell)} \right\rangle.
\end{align*}
where the last equality follows from the fact that $(1-\pi_s)$ is an orthogonal projection. From Propositions  \ref{prop:linresponselast} and \ref{prop:linresponselastmu} we obtain that
\begin{align*}
    \lim_{\e\to 0} \ddot{\lambda}_\e^{(\ell)} =&\ \left\langle (e^{-2\pi i k \beta_{s_\ell}} - D_\beta)\lim_{\e\to 0}(1 - \pi_s)\dot{f}_\e^{(\ell)}, \lim_{\e\to 0} \frac{1}{\e} (1 - \pi_s)\mu_\e^{(\ell)} \right\rangle \\
&\quad + \left\langle (e^{-2\pi i k \beta_{s_\ell}} - D_\beta)\lim_{\e\to 0}\frac{1}{\e}(1 - \pi_{s_\ell}) f_\e^{(\ell)},\lim_{\e\to 0} (1 - \pi_{s_\ell})\dot{\mu}_\e^{(\ell)} \right\rangle.\\
=&\sum_{s=1,s\neq s_\ell}^{S} (e^{-2\pi i k \beta_{s_\ell}} - e^{-2 \pi i k \beta_s}) \sum_{j\in B_s} \left(\lim_{\e\to 0}\dot{f}_{\e}\right)_j \overline{\left( \lim_{\e\to 0} \frac{1}{\e}\mu_\e^{(\ell)} \right)_j}  \\
&\quad +\sum_{s=1,s\neq s_\ell}^{S} (e^{-2\pi i k \beta_{s_\ell}} - e^{-2 \pi i k \beta_s}) \sum_{j\in B_s} \left(\lim_{\e\to 0} \frac{1}{\e}f_{\e}\right)_j \overline{\left( \lim_{\e\to 0} \dot{\mu}_\e^{(\ell)} \right)_j}  \\
    =&2\sum_{s=1,s\neq s_\ell}^{S} \frac{1}{e^{-2\pi i k \beta_{s_\ell}} - e^{-2\pi i k \beta_{s}}} \sum_{j\in B_{s}} \left(D_\beta \dot W f^{(\ell)}\right)_j \left(\overline{\dot W D_{-\beta}f^{(\ell)}}\right)_j
\end{align*}

From Lemma \ref{lem:lambcont} the map $$\e\in[0,\gamma)\mapsto \begin{cases}
        \displaystyle \dot{\lambda}_\e^{(\ell)} = \frac{\partial}{\partial \e} \lambda_{\e}^{(\ell)},& \text{if}\ \e\in(0,\gamma),\vspace{0.1cm}\\
        \hat{\lambda}^{(\ell)},&\text{if}\ \e=0,
    \end{cases}$$ is continuous. From the Mean Value Theorem we have that for every $\e>0$, sufficiently small, there exists $c_\e \in (0,\e)$ such that
    $$ \dot{\lambda}_{\e}^{(\ell)} -\hat{\lambda}^{(\ell)} = \ddot{\lambda}_{c_\e} \e,$$
therefore
\begin{align*}
    \lim_{\e\to 0}\frac{\dot{\lambda}_{\e}^{(\ell)} -\hat{\lambda}^{(\ell)}}{\e} &=2  \sum_{s=1,s\neq s_\ell}^{S} \frac{1}{e^{-2\pi i k \beta_{s_\ell}} - e^{-2\pi i k \beta_s}} \left\langle 
 \pi_s D_\beta \dot{W} f^{(\ell)} ,\pi_s  \dot{W} D_{-\beta} f^{(\ell)}  \right\rangle
\end{align*}
\end{proof}

Below, we finally prove Theorem \ref{thm:2}. %\label{sec:proof2}

\begin{proof}
    The proof of Theorem \ref{thm:2} follows from a direct combination of Proposition \ref{prop:3.3} with Theorems \ref{thm:3}, \ref{thm:fhat} and \ref{thm:hlambda}.
\end{proof}

\section{Proof of Proposition \ref{prop:LRA}}
\label{sec:alpharesp}

The proof of Proposition \ref{prop:LRA} follows from standard techniques in perturbation theory (see \cite{Kato}).

\begin{proof}[Proof of Proposition \ref{prop:LRA}]
From Lemma \ref{lem:eform} and \cite[II.2.2]{Kato}, we obtain that the maps in \eqref{pro:sard} are analytic functions. Equation \eqref{eq:scarpeli} follows from differentiating the identity $D_{k,\alpha} W_\e f_{\alpha,k,\e} = \lambda_{k,\e,\alpha} f_{k,\e,\alpha},$
with respect to $\alpha$ and observing, we can invert the left-hand side of the identity
$$ \left(D_{k,\alpha} W_\e - \lambda_{k,\e,\alpha} \right) \frac{\partial}{\partial \alpha}f_{k,\e,\alpha} = \left(D_{k,\alpha} W_\e - \lambda_{k,\e,\alpha} \right)^{-1} \left(\frac{\partial }{\partial \alpha}D_{k,\alpha} W_\e - \frac{\partial}{\partial \alpha} \lambda_{k,\e,\alpha}\right) f_{k,\e,\alpha},$$
since $\langle f_{k,\e,\alpha}, \frac{\partial}{\partial \alpha^\ell} f_{k,\e,\alpha} \rangle = 0,$ for each $\ell\in\{1,\ldots,N\}.$

% that $\left(\frac{\partial }{\partial \alpha}D_\alpha W_\e - \frac{\partial}{\partial \alpha} \lambda_{k,\e,\alpha}\right)f_{k,\alpha,\e}$ cannot be linearly dependent with $f_{k,\e,\alpha}$ since $\lambda_{k,\e,\alpha}$ is a simple eigenvalue of $D_{k,\alpha} W_\e$.} 
\end{proof}

\section[Explicit formulae for $\hat{\lambda}_{k,\beta}^{(\ell)}$ and $f^{(\ell)}$ for $W_\e$ (see (1))]
{Explicit formulae for \texorpdfstring{$\hat{\lambda}_{k,\beta}^{(\ell)}$}{lambda} and \texorpdfstring{$f^{(\ell)}$}{f} for \texorpdfstring{$W_\e$}{W} defined in \texorpdfstring{$(1)$}{(1)}}

\begin{proposition}[\cite{Losonczi1992,Eigen}]
\label{eigen}
Let $\hat{P}_{\beta}$ be the matrix defined in Theorem \ref{thm:3}. The eigenvalues of $\hat{P}_{\beta}$ are
$$\hat\lambda_{k,\beta}^{(\ell)} = \begin{cases}
 \displaystyle   e^{-2 \pi i k \beta_1}\left(-1 +   \cos \left(\frac{2 \ell -1}{2N_1+1}\pi \right)\right), &\text{if }\ell\in\{1,2,\ldots, N_1\}\vspace{0.1cm}\\
 \displaystyle  e^{-2 \pi i k \beta_s} \left(-1 +   \cos \left(\frac{ \ell-N_{s-1}}{N_{s}-N_{s-1} +1} \pi \right)\right), &\text{if }\ell\in\{N_{s-1}+1,\ldots, N_s\},s\neq 1,S\\
   \displaystyle  e^{-2 \pi i k \beta_S} \left(-1 +   \cos \left(\frac{2 (\ell-N_{S-1}) -1}{2(N-N_{S-1}) +1}\pi \right)\right), &\text{if }\mmc{\ell}\in\{N_{S-1}+1,\ldots, N\}
\end{cases} $$
and their respective eigenvectors $f^{(j)}$  are:
\begin{enumerate}
    \item[$(a)$] if $\ell\in\{1,\ldots,N_1\}$
    $$\left(f^{(\ell)}\right)_j = \begin{cases}
     \displaystyle  \sin \left( \pi (2 \ell+1) \left( \frac{N_1+1}{2 N_1+1} -  \frac{j}{2 N_1+1} \right)  \right) , &\text{if }j\in\{1,\ldots, N_1\}\\
     0, &\text{if }j\in\{N_1+1,\ldots, N\}
\end{cases}.$$
\item[$(b)$]  if $\ell\in\{N_{s-1}+1,\ldots,N_s\}$ and $s\in\{2,\ldots,S-1\}$
    $$\left(f^{(\ell)}\right)_j = \begin{cases}
     \displaystyle  \sin \left( \pi \frac{ (\mmc{\ell} - N_{s-1})(j-N_{s-1})   }{N_{s}-N_{s-1} +1}  \right) , &\text{if }j\in\{N_{s-1}+1,\ldots, N_s\}\\
     0, &\text{if }j\in \{1,\ldots,N\}\setminus \{N_{s-1}+1,\ldots, N_s\}
\end{cases}.$$
\item[$(c)$] if $\ell\in\{N_{S-1}+1,\ldots,N\}$ 
$$\left(f^{(\ell)}\right)_j=  \begin{cases}
    0 , &\text{if }j\in\{1,2,\ldots, N_{S-1}\}\\
      \displaystyle  \sin \left( \pi (2 \ell+1)  \frac{j-(N-N_{S-1})}{2 (N-N_{S-1})+1}  \right) , &\text{if }j\in\{N_{S-1}+1,\ldots, N\}
\end{cases}.  $$
\end{enumerate}
\end{proposition}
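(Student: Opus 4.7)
The plan is to exploit the block structure of $\hat{P}_{\beta} = D_\beta \hat{W}_L$ to reduce the eigenvalue problem to $S$ independent tridiagonal problems, and then apply the classical spectral theory of discrete Laplacians with mixed Neumann and Dirichlet boundary conditions.

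Because $\hat{W}_L$ is block-diagonal with blocks $\hat{W}_1,\ldots,\hat{W}_S$ (see \eqref{What}) and $D_\beta$ is block-scalar with values $e^{-2\pi i k\beta_s}$ on the $s$-th block, the matrix $\hat{P}_\beta$ is itself block-diagonal, with $s$-th block equal to $e^{-2\pi i k\beta_s}\hat{W}_s$. A standard linear-algebra argument then gives
\[
\sigma(\hat{P}_\beta)=\bigsqcup_{s=1}^{S} e^{-2\pi i k\beta_s}\,\sigma(\hat{W}_s),
\]
with eigenvectors of $\hat{P}_\beta$ obtained by extending the eigenvectors of each $\hat{W}_s$ by zero outside the index range $\{N_{s-1}+1,\ldots,N_s\}$. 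Thus the whole problem reduces to diagonalising the individual blocks $\hat{W}_s$. These are real symmetric tridiagonal matrices whose off-diagonal entries equal $1/2$ and whose diagonals are inherited from $\dot{W}$ (see \eqref{Wdot1d}). Inspecting \eqref{Wdot1d}, the first block $\hat{W}_1$ has diagonal $(-1/2,-1,\ldots,-1)$, corresponding to Neumann--Dirichlet boundary conditions; the last block $\hat{W}_S$ has diagonal $(-1,\ldots,-1,-1/2)$, Dirichlet--Neumann; and every intermediate block $\hat{W}_s$, $2\le s\le S-1$, has diagonal $(-1,\ldots,-1)$, Dirichlet--Dirichlet.

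Next, I would invoke the standard (and well-documented, see \cite{Eigen}) eigendecomposition for each of these three boundary-condition types. For a Dirichlet--Dirichlet block of size $n$, the eigenpairs are $\mu_m=-1+\cos(m\pi/(n+1))$ with eigenvector $v_m(j)=\sin(jm\pi/(n+1))$, for $m=1,\ldots,n$. For a Neumann--Dirichlet (resp.\ Dirichlet--Neumann) block of size $n$, one analogously obtains $\mu_m=-1+\cos((2m-1)\pi/(2n+1))$ (resp.\ the mirror version) with sine eigenvectors whose phases reflect the half-integer mode shift. Setting $n=L_s=N_s-N_{s-1}$ and re-indexing $m=\ell-N_{s-1}$ so that $m\in\{1,\ldots,L_s\}$ translates precisely to $\ell\in\{N_{s-1}+1,\ldots,N_s\}$, this yields the three cases (a), (b), (c) stated in the proposition, after multiplying the eigenvalue by $e^{-2\pi i k\beta_s}$ and extending the eigenvector by zero outside the relevant band.

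Finally, I would verify the formulas by direct substitution into the tridiagonal recurrence, using the product-to-sum identity $\sin(\theta-\phi)+\sin(\theta+\phi)=2\sin\theta\cos\phi$ for the interior rows, and checking the first and last rows separately to confirm the appropriate Neumann or Dirichlet condition. The only non-routine bookkeeping is getting the index shift $\ell\mapsto\ell-N_{s-1}$ and the half-integer numerators $(2\ell-1)$ vs.\ $(2(\ell-N_{S-1})-1)$ consistent across the three cases; beyond that, the computation is entirely mechanical. The main obstacle, if any, is therefore purely notational rather than conceptual, and the argument can be recorded compactly by citing \cite{Eigen} for the boundary-condition-specific spectral formulas and then reading off the block-diagonal structure.
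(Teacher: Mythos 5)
Your approach is the same as the paper's: the paper itself gives no proof for Proposition \ref{eigen}, relying entirely on the block-diagonal structure of $\hat{P}_\beta = D_\beta\hat{W}_L$ and a citation to \cite{Eigen} for the spectral formulas of tridiagonal Laplacians with Neumann/Dirichlet, Dirichlet/Dirichlet, and Dirichlet/Neumann boundary data (this is exactly the observation the authors make when verifying Definition \ref{Ladmiss}(3) for $W_\e$). Your proposal fills in the block-reduction bookkeeping and the boundary-condition identification correctly, so it is simply a more explicit rendering of the same argument.
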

The quantities $\doublehat{\lambda}_k$ and $\hat{f}_k$ can be explicitly computed using the above equations and Theorem \ref{thm:2} (via Theorems \ref{thm:fhat} and \ref{thm:hlambda}).

\section{Acknowledgements}
GF acknowledges productive early discussions with Dimitris Giannakis about a related model during a visit to Dartmouth in 2023.
GF and MMC are supported by an Australian Research Council Laureate Fellowship (FL230100088).

%\gf{references need cleaning up for typos, capital letters, etc...} \m{for some reason it is not in alphabetic ordering, idk how to fix this...}\gf{fixed.}

\bibliographystyle{plain}
\bibliography{refs}

\newpage

\end{document}